\documentclass[sigconf]{acmart}
\usepackage{xspace}
\usepackage{fancyvrb}
\usepackage[nameinlink]{cleveref}
\usepackage{makecell}
\usepackage{mathtools}

\usepackage[only, llparenthesis, rrparenthesis, rightarrowtriangle]{stmaryrd}
\usepackage{subcaption}
\usepackage{thm-restate}

\usepackage{mathpartir}
\usepackage{tikz-cd}
\usepackage{quiver}
\usetikzlibrary{nfold}

\usepackage{macros}

\newtheorem*{claim}{Claim}
\allowdisplaybreaks

\renewcommand\footnotetextcopyrightpermission[1]{}
\acmConference{}{}{}
\acmDOI{}
\acmISBN{}

\begin{document}

\title{A type theory for invertibility in weak
  \texorpdfstring{\(\omega\)}{ω}-categories}

\author{Thibaut Benjamin}
\email{thibaut.benjamin@universite-paris-saclay.fr}
\orcid{0000-0002-9481-1896}
\affiliation{%
  \institution{Université Paris-Saclay, CNRS, ENS Paris-Saclay, LMF}
  \city{Gif-sur-Yvette}
  \country{France}
}

\author{Camil Champin}
\email{camil.champin@ens-lyon.fr}
\orcid{TODO}
\affiliation{%
  \institution{ENS de Lyon}
  \city{Lyon}
  \country{France}
}

\author{Ioannis Markakis}
\email{ioannis.markakis@cl.cam.ac.uk}
\orcid{0000-0001-7553-9009}
\affiliation{%
  \institution{University of Cambridge}
  \city{Cambridge}
  \country{UK}
}

\begin{abstract}
  We present a conservative extension \cattinv of the dependent type theory
  \catt for weak \(\omega\)\-categories with a type witnessing coinductive
  invertibility of cells. This extension allows for a concise description of the
  ``walking equivalence'' as a context, and of a set of maps characterising
  \(\omega\)\-equifibrations as substitutions. We provide an implementation of
  our theory, which we use to formalise basic properties of invertible cells.
  These properties allow us to give semantics of \cattinv in marked weak
  \(\omega\)\-categories, building a fibrant marked \(\omega\)\-category out of
  every model of \(\cattinv\).
\end{abstract}

\maketitle
\pagestyle{plain}
\section{Introduction}

Homotopy type theory endows every type with the structure of a weak higher
groupoid, arising from its tower of iterated identity
types~\cite{lumsdaineWeak$omega$CategoriesIntensional2009,
vandenbergTypesAreWeak2011, altenkirchSyntacticalApproachWeak2012}. Building on
this observation, Brunerie developed a type‑theoretic presentation of weak
\(\omega\)-groupoids~\cite{brunerie2016homotopy}, and
\citet{finsterTypetheoreticalDefinitionWeak2017} introduced the directed variant
\catt, a type theory for weak \(\omega\)-categories. In this paper, we introduce
\cattinv, an extension of \catt internalising \emph{invertibility}.

The notion of invertibility in higher categories is in general subtle. In an
ordinary category, a morphism \(f\) is invertible if it has an inverse
\(f^{-1}\) such that
\begin{align*}
  f\circ f^{-1} &= \id & f^{-1}\circ f &= \id
\end{align*}
In a bicategory, these equalities are replaced by the existence of invertible
\(2\)\-cells. Replacing inductively equalities with higher-dimensional
invertible cells can be used to describe invertibility for
finite-dimensional categories. For \(\omega\)\-categories, which are
infinite-dimensional, this process is to be understood coinductively, so showing
that a cell is invertible amounts to producing an infinite amount of data.
The theory \cattinv is a syntactic method for reasoning about such infinite
structures.

Invertibility plays a crucial role in the homotopy theory of higher categories,
in particular, in defining weak equivalences between them: functors that are
surjective up to invertibility and weak equivalences on hom higher categories.
In the case of strict \(\omega\)\-categories, where the composition operations
are associative and unital on the nose, these weak equivalences are part of a
model structure~\cite{lafontFolkModelStructure2010}. The analogous statement
for weak \(\omega\)\-categories, or even \(\omega\)\-groupoids, still remains
open, while it has been proven for other models~\cite{loubaton2024categorical,
chanavat2024model}.
The case of groupoids has been solved up to dimension \(3\) by
\citet{henry2023homotopy} who construct a model structure equivalent to
\(3\)\-truncated homotopy types. In the directed setting, there has been
significant work by \citet{fujii$omega$weakEquivalencesWeak2024}, who show that
weak equivalences satisfy the 2-of-6 property. The same authors~\cite{
fujii$o$equifibrationsStrictWeak2025} also show that the class of
\emph{equifibrations}, an analogue of the fibrations of strict
\(\omega\)\-categories, are characterised by a right lifting property.
Independently, \citet{benjaminNaturalityHigherdimensionalPath2025} have given a
partial construction of \emph{directed cylinders} in the type theory \catt.
Adapting their naturality construction to the theory \cattinv may be used for
producing undirected analogues of their cylinders, a requirement for the
model structure.

\subsubsection*{Contributions}
We propose an extension \cattinv of the type
theory \catt for weak \(\omega\)-categories, with a type \(\Inv(t)\) of
invertibility structures over a term \(t\). This type is equipped with
destructors that
provide invertibility data for \(t\), as well as constructors internalising
closure properties of invertible cells. We also postulate the usual
\(\beta/\eta\)\-rules and show that the resulting theory is normalising for an
important class of terms.

We provide an implementation\footnote{available at
  \url{https://zenodo.org/records/18343317}} for \cattinv and we use it to
formalise with minimal effort, previous results about invertibility structure,
that are not expressible in the theory \catt. We take this as evidence that this
theory is well suited to working in the presence of invertibility structure, and
may be a valuable asset for further investigating the construction of a model
structure on weak \(\omega\)-categories.

We show that \cattinv is a conservative extension of \catt, and that the
inclusion between their respective syntactic categories is fully faithful. This
inclusion allows us to induce an \(\omega\)\-category from a model of \cattinv,
providing semantics of \cattinv into \(\omega\)\-categories. Using these
semantics, we are able to reconstruct the walking equivalence of
\citet{ozornovaWhatEquivalenceHigher2024}, which is used in the characterisation
of equifibrations by \citet{fujii$o$equifibrationsStrictWeak2025}. Finally, we
generalise the definition of marked \(\omega\)\-categories of
\citet{loubatonInductiveModelStructure2023} to the weak case, and we promote
our semantics to land in fibrant marked \(\omega\)\-categories.

\subsubsection*{Related Works}
Several definitions have been proposed throughout the years for algebraic
globular weak \(\omega\)-categories. They were
first studied by \citet{bataninMonoidalGlobularCategories1998} and
\citet{leinsterHigherOperadsHigher2003}. Independently,
\citet{maltsiniotisGrothendieck$infty$groupoidsStill2010} proposed a model of
weak \(\omega\)-categories inspired from the definition of weak
\(\omega\)-groupoids due to \citet{grothendieckPursuingStacks1983}.
\citet{ara$infty$groupoidesGrothendieckVariante2010} and
\citet{bourkeIteratedAlgebraicInjectivity2020} show the equivalence
between these definitions.

The theory \catt was introduced by
\citet{finsterTypetheoreticalDefinitionWeak2017}, taking inspiration
from Maltsiniotis and from \citet{brunerie2016homotopy}.
\citet{benjaminGlobularWeak$omega$categories2024} then showed
that the models of \catt correspond exactly to the Grothendieck-Maltsiniotis
approach. Finally, \citet{deanComputadsWeak$omega$categories2024} proposed
another description using computads, and
showed its equivalence to the Batanin-Leinster definition. Finally, the whole
correspondence was completed by \citet{benjaminCaTTContextsAre2024} who showed
that \catt contexts correspond to the finite computads.



Invertibility in weak \(\omega\)-categories has been studied in various models,
first in a model-independent way by \citet{cheng2007omega} and
\citet{riceCoinductiveInvertibilityHigher2020},
and more recently for weak \(\omega\)\-categories by
\citet{fujiiWeaklyInvertibleCells2023} and
\citet{benjaminInvertibleCells$omega$categories2024}. The walking equivalence
for \(\omega\)\-categories was introduced by
\citet{ozornovaWhatEquivalenceHigher2024} and shown to be contractible in the
strict case by \citet{hadzihasanovicModelCoherentWalking2024}. In the weak case,
it was used to characterise equifibrations by
\citet{fujii$o$equifibrationsStrictWeak2025}.

\subsubsection*{Plan of the paper} In \Cref{sec:catt}, we present the dependent
type theory \catt. In \Cref{sec:semantics-catt}, we present the notion of models
of a theory and we use it to define weak \(\omega\)-categories. We introduce the
theory \cattinv in \Cref{sec:cattinv}, before formalising some constructions of
invertibility in \Cref{sec:examples}. In \Cref{sec:cattinv-semantics}, we show
that \cattinv is conservative over \catt and prove that we can recover the
walking equivalence as a context in \cattinv.
Finally, in \Cref{sec:marked}, we introduce marked weak
\(\omega\)-categories and construct a semantics of \cattinv in fibrant marked
\(\omega\)\-categories.


\section{The type theory \texorpdfstring{\catt}{CaTT}}
\label{sec:catt}

We start with a brief presentation of the type theory
\catt of \citet{finsterTypetheoreticalDefinitionWeak2017}, whose models are
weak \(\omega\)\-categories, and then recall the suspension meta-operation on
it. The raw syntax of \catt is defined by the grammar below, where
\(\mathcal{V}\) is a countable set of variable names:
\[\begin{tabular}{lrclll}
  Variable & \(x\) & \(\Coloneqq \) & \(x \in \mathcal{V}\)  \\
  Type & \(A,B\) & \(\Coloneqq\) & \(\obj\)
          & \(\vert\) & \(\arr[A]{t}{u}\) \\
  Term & \(t,u\) & \(\Coloneqq\) & \(x\)
          & \(\vert\) & \(\coh_{\Gamma,A}[\gamma]\)\\
  Context & \(\Gamma,\Delta\) & \(\Coloneqq \) & \(\emptycontext\)
          & \(\vert\) & \(\Gamma \extctx (x:A)\) \\
  Substitution & \(\gamma,\delta\) & \(\Coloneqq\) & \(\emptysub\)
          & \(\vert\) & \(\gamma \extsub (x \mapsto t)\)
\end{tabular}\]
The action of substitution on terms and types, and composition of substitutions
are defined mutually recursively by the following formulas:
\begin{mathpar}
  \obj[\gamma] = \obj \and
  (\arr[A]{u}{v})[\gamma] = \arr[{A[\gamma]}]{u[\gamma]}{v[\gamma]} \\
  x[\emptysub] = x \and
  x [\gamma \extsub (y \mapsto t)] = \begin{cases}
    t& \text{if } y = x \\
    x[\gamma] & \text{otherwise}
  \end{cases} \and
  \coh_{\Gamma,A}[\delta][\gamma] = \coh_{\Gamma,A}[\delta\circ \gamma] \\
  \emptysub\circ\ \gamma = \emptysub \and
  (\delta \extsub (x\mapsto t))\circ\gamma
    = (\delta\circ\gamma)\extsub (x\mapsto t[\gamma])
\end{mathpar}
The \emph{dimension} of types and contexts are given recursively by:
\begin{align*}
  \dim(\obj) &= -1 & \dim(\arr[A]{u}{v}) &= \dim A + 1 \\
  \dim(\emptycontext) &= -1 &
  \dim(\Gamma \extctx (x:A)) &= \max(\dim \Gamma, \dim A + 1)
\end{align*}
As is common with dependent type theories, \catt is expressed in terms of
the following judgements, defined inductively in terms of a set of derivation
rules.
\[
  \begin{tabular}{|l|l|}
    \hline
    \(\ctxty{\Gamma}\)& \text{\(\Gamma\) is a valid context}\\
    \(\typing{\Gamma}{A}\) & \text{\(A\) is a valid type in \(\Gamma\)} \\
    \(\typing{\Gamma}{t}[A]\) & \text{\(t\) is a term of type \(A\) in
                                \(\Gamma\)}\\
    \(\typing{\Gamma}{\gamma}[\Delta]\) &
        \text{\(\gamma\) is a valid substitution
        of \(\Delta\) into \(\Gamma\).}\\
    \hline
  \end{tabular}
\]
We require first the usual structural rules for variables, contexts and
substitutions, where \(x\in \Var\Gamma\) and
\((x\ty A)\in \Gamma\) are relations defined recursively in the obvious way.
\begin{mathpar}
  \inferrule{
    \null
  }{
    \ctxty{\emptycontext}
  }\and
  \inferrule*[vcenter,right = {\((x\notin \Var\Gamma)\)}]{
    \typing{\Gamma}{A}
  }{
    \ctxty{\Gamma\extctx (x \ty A)}
  } \and
  \inferrule*[vcenter,right = {\((x\ty A)\in \Gamma\)}]{
    \ctxty{\Gamma}
  }{
    \typing{\Gamma}{x}[A]
  } \\
  \inferrule{
    \ctxty{\Gamma}
  }{
    \typing{\Gamma}{\emptysub}[\emptycontext]
  }\and
  \inferrule{
    \typing{\Delta}{\gamma}[\Gamma]\\
    \typing{\Gamma}{A} \\
    \typing{\Delta}{t}[A[\gamma]]
  }{
    \typing{\Delta}{\gamma\extsub (x\mapsto t)}[\Gamma\extctx (x\ty A)]
  }
\end{mathpar}
A consequence of these rules is the existence of an identity substitution
\(\typing{\Gamma}{\id_\Gamma}[\Gamma]\) sending each variable to itself.

The type formation rules of \catt are the following ones, where
the last two are merged into one in earlier presentations:
\begin{mathpar}
  \inferdef[\(\obj\)-intro]{\ctxty{\Gamma}}{\typing{\Gamma}{\obj}}
  \label{rule:obj-intro}
  \and
  \inferdef[\(\to\)-intro\textsubscript{0}]{
    \typing{\Gamma}{u}[\obj] \\
    \typing{\Gamma}{v}[\obj] }
  {\typing{\Gamma}{\arr[\obj]{u}{v}}}
  \label{rule:arr-intro1}
  \and
  \inferdef[\(\to\)-intro\textsubscript{+}]{
    \typing{\Gamma}{u}[\arr[A]{v}{w}] \\
    \typing{\Gamma}{u'}[\arr[A]{v}{w}] }
  {\typing{\Gamma}{\arr[{\arr[A]{v}{w}}]{u}{u'}}}
  \label{rule:arr-intro2}
\end{mathpar}
For the sake of readability, we sometimes omit the index \(A\) in the expression
\(\arr[A]{u}{v}\), when it can be inferred. We note that we have used named
variables for the sake of readability, but we identify contexts up to
\(\alpha\)\-renaming. In particular, we could have instead presented
equivalently the theory using De Bruijn indices. We define the \emph{dimension}
of a term \(\typing{\Gamma}{t}[A]\) to be \(\dim A + 1\). When
\(A = \arr{u}{v}\) we call \(u\) and \(v\) the \emph{source} and \emph{target}
of \(t\) respectively.

To state the term formation rule of \catt, we introduce two
auxiliary judgements \(\psctxty{\Gamma}\) and \(\pstyping{\Gamma}{x}[A]\)
parametrising a set of contexts corresponding to \emph{pasting diagrams},
the arities of the operations of globular \(\omega\)\-categories. The rules for
these judgements are given below:
\begin{mathpar}
  \inferdef[PSS]
  { }
  {\pstyping{(x\ty\obj)}{x}[\obj]}
  \label{rule:pss}
  \and
  \inferdef[PSD]
  {\pstyping{\Gamma}{f}[\arr[A]{x}{y}]}
  {\pstyping{\Gamma}{y}[A]}
  \label{rule:psd}
  \and
  \inferdef[PSE]
  {\pstyping{\Gamma}{x}[A]}
  {\pstyping{\Gamma\extctx(y\ty A)\extctx(f\ty \arr[A]{x}{y})}{f}[{\arr[A]{x}{y}}]}
  \label{rule:pse}
  \and
  \inferdef[PS]
  {\pstyping{\Gamma}{x}[\obj]}
  {\psctxty{\Gamma}}
  \label{rule:ps}
\end{mathpar}
A source (resp. \emph{target}) \emph{variable} of \(\psctxty{\Gamma}\) is one
that is not the target (resp. source) of another variable. We will write
\(\fulltyping{\Gamma}{\arr[A]{u}{v}}\) and say that \(\arr[A]{u}{v}\) is a
\emph{full type} of a pasting diagram \(\Gamma\) of dimension \(n\) when either
of the two following conditions is satisfied:
\begin{itemize}
  \item both \(u\) and \(v\) use all variables of \(\Gamma\),
  \item \(\dim u = \dim v = n - 1\), and \(u\) and \(v\) respectively use all
    source and target variables of \(\Gamma\) of dimension at most \(n-1\).
\end{itemize}
This definition of full type is equivalent to the original one by the work of
\citet[Proposition~2.10]{deanComputadsWeak$omega$categories2024}. The final rule
of the type theory \catt then takes the form:
\begin{mathpar}
  \inferdef[\(\coh\)-intro]{
    \psctxty{\Gamma} \\
    \fulltyping{\Gamma}{A} \\
    \typing{\Delta}{\gamma}[\Gamma]
  }{
    \typing{\Delta}{\coh_{\Gamma,A}[\gamma]}[A[\gamma]]
  }
  \label{rule:coh-intro}
\end{mathpar}

\begin{example}\label{ex:globular-ctx}
  The following is a valid context of \catt. We visualise contexts by
  drawing variables of type \(\obj\) as points, and variables of type
  \(\arr{u}{v}\) as arrows from \(u\) to \(v\).
  \[
    \begin{array}{l}
      (x:\obj) \extctx (y : \obj) \extctx \\
      (f \ty \arr{x}{y})\extctx  (g\ty\arr{x}{y}) \extctx \\
      (a : \arr{f}{g}) \extctx (h\ty \arr{x}{x})
    \end{array}
    \qquad
    \begin{tikzcd}
      x
      \ar[r, bend left = 40,"f",""{below, name = A}]
      \ar[r, bend right = 40,"g"{below},""{name = B}]
      \ar[loop left, out=-150, in=150, distance=3em, "h"{left}]
      & y \ar[from = A, to = B, Rightarrow, "a"]
    \end{tikzcd}
  \]
\end{example}

\begin{example}\label{ex:pasting-ctx}
  The following is a valid context corresponding to a pasting diagram
  \[
    \begin{array}{l}
      (x\ty\obj) \extctx (y\ty\obj) \extctx (f\ty \arr{x}{y}) \extctx\\
      (g\ty\arr{x}{y}) \extctx (a\ty\arr{f}{g}) \extctx\\
      (z\ty\obj) \extctx (h\ty\arr{y}{z})
    \end{array}
    \qquad
    \begin{tikzcd}[ampersand replacement=\&]
      x
      \ar[r, bend left = 40,"f",""{below, name = A}]
      \ar[r, bend right = 40,"g"{below},""{name = B}]
      \ar[from = A, to = B, Rightarrow, "a"]
      \& y
      \ar[r, "h"]
      \& z
    \end{tikzcd}
  \]
  The source variables of this context are \(f\), \(a\) and \(h\), while the
  target variables are  \(g\), \(a\) and \(h\).
\end{example}

\begin{example}\label{ex:id-comp}
  We will derive terms corresponding to identity and composition operations.
  For the former, we consider the term:
  \[
    \typing{(x\ty \obj)}{\coh_{(x\ty\obj),\arr{x}{x}}[\id]}[\arr{x}{x}]
  \]
  which we will denote by \(\id x\). For the latter, consider the
  context \(\Gamma\):
  \[
    \begin{array}{l}
      (x\ty\obj) \extctx (y\ty\obj) \extctx (f\ty \arr{x}{y}) \extctx\\
      (z\ty\obj) \extctx (g\ty\arr{y}{z})
    \end{array}
    \qquad
    \begin{tikzcd}[ampersand replacement=\&]
      x
      \ar[r, "f"]
      \& y
      \ar[r, "g"]
      \& z
    \end{tikzcd}
  \]
  The type \(\arr[\obj]{x}{z}\) is full, so we can derive the validity of the term:
  \[
    \typing{\Gamma}{\coh_{\Gamma,\arr{x}{z}}[\id]}[\arr{x}{z}]
  \]
  which we will denote by \(\comp{f}{g}\).
\end{example}

\begin{example}
  The following is a valid context which cannot be obtained as a pasting
  diagram, since the source of \(\alpha\) is a term that is not a variable:
  \[
    \begin{array}{l}
      (x\ty\obj) \extctx (y\ty\obj) \extctx (f\ty \arr{x}{y}) \extctx\\
      (z\ty\obj) \extctx (g\ty\arr{y}{z}) \extctx (h\ty \arr{x}{z})\extctx \\
      (\alpha \ty \arr{\comp{f}{g}}{h})
    \end{array}
    \qquad
    \begin{tikzcd}[column sep = tiny, row sep = small]
      & y \\
      x && z
      \arrow["g", from=1-2, to=2-3]
      \arrow["f", from=2-1, to=1-2]
      \arrow[""{name=0, anchor=center, inner sep=0}, "h"', from=2-1, to=2-3]
      \arrow[shorten <= 0.2em, shorten >= 0.1em, Rightarrow, from=1-2, to=0]
\end{tikzcd}
  \]
\end{example}

\emph{Suspension} is a meta-operation corresponding semantically to the
formation of hom \(\omega\)\-categories of
\citet{benjaminHom$omega$categoriesComputad2024}. Homotopy type theorists may
recognise it as the fact that every construction on identity types may be
interpreted in higher identity types by changing the base type. The
suspension is defined mutually recursively on valid contexts, types, terms and
substitutions by:
\begin{mathpar}
  \Susp (\emptycontext)
    = (v^-\ty \obj) \extctx (v^+\ty \obj) \and
  \Susp (\Gamma\extctx (x \colon A))
    = (\Susp \Gamma)\extctx (\Susp x \colon \Susp A) \and
  \Susp (\obj)
    = \arr[A]{v^-}{v^+} \and
  \Susp (\arr[A]{u}{v})
    = \arr[\Susp A]{\Susp u}{\Susp v} \and
  \Susp (x)
    = x \and
  \Susp (\coh_{\Gamma,A}[\gamma])
    =  \coh_{\Susp \Gamma,\Susp A}[\Susp \gamma] \and
  \Susp(\emptysub)
    = (v^-\mapsto v^-) \extsub (v^+\mapsto v^+) \and
  \Susp(\gamma\extsub (x\mapsto t))
    = (\Susp \gamma)\extsub (\Susp x \mapsto \Susp t)
\end{mathpar}
Here the variables \(v^-\) and \(v^+\) are assumed to be fresh in \(\Gamma\);
this can be easily implemented using De Bruijn indices. This operation preserves
all judgements of \catt including the auxiliary ones regarding pasting diagrams.

\begin{example}\label{ex:susp-id-comp}
  Iteratively applying the suspension on the identity and composition terms
  of Example~\ref{ex:id-comp}, we obtain terms corresponding to identities and
  composition of higher dimensional terms as well. More precisely, we may derive
  terms:
  \begin{align*}
    \typing{\Susp^n(x \ty \obj)}{\Susp^{n}(\id x)}[\arr{x}{x}] &&
    \typing{\Susp^n\Gamma}{\Susp^{n}(\comp{f}{g})}[\arr{x}{z}]
  \end{align*}
  which we will again simply denote by \(\id x\) and \(\comp{f}{g}\). We
  illustrate the contexts \(\Gamma\), \(\Susp \Gamma\) and \(\Susp^2 \Gamma\)
  below. Both the identity and composition terms use all
  variables of the context they are defined over.
  \[
    \begin{tikzcd}
      \bullet \ar[r] & \bullet \ar[r] &\bullet
    \end{tikzcd}
    \qquad
    \begin{tikzcd}
      \bullet & \bullet
      \arrow[""{name=0, anchor=center, inner sep=0}, curve={height=-18pt}, from=1-1, to=1-2]
      \arrow[""{name=1, anchor=center, inner sep=0}, curve={height=18pt}, from=1-1, to=1-2]
      \arrow[""{name=2, anchor=center, inner sep=0}, from=1-1, to=1-2]
      \arrow[between={0.2}{0.8}, Rightarrow, from=0, to=2]
      \arrow[between={0.2}{0.8}, Rightarrow, from=2, to=1]
    \end{tikzcd}
    \qquad
    \begin{tikzcd}[column sep=huge]
      \bullet & \bullet
      \arrow[""{name=0, anchor=center, inner sep=0}, curve={height=-18pt}, from=1-1, to=1-2]
      \arrow[""{name=1, anchor=center, inner sep=0}, curve={height=18pt}, from=1-1, to=1-2]
      \arrow[""{name=2, anchor=center, inner sep=0}, curve={height=-22pt}, between={0.2}{0.8}, Rightarrow, from=0, to=1]
      \arrow[""{name=3, anchor=center, inner sep=0}, curve={height=22pt}, between={0.2}{0.8}, Rightarrow, from=0, to=1]
      \arrow[""{name=4, anchor=center, inner sep=0}, between={0.2}{0.8}, Rightarrow, from=0, to=1]
      \arrow[between={0.2}{0.8}, Rightarrow, scaling nfold=3, from=3, to=4]
      \arrow[between={0.2}{0.8}, Rightarrow, scaling nfold=3, from=4, to=2]
    \end{tikzcd}
  \]
\end{example}

The \emph{disks} and \emph{spheres} are two families of contexts that classify
terms and types respectively. They are defined recursively by:
\begin{align*}
  \disk{0} &= (\dvar_0\ty \obj) & \disk{n+1} &= \Susp\disk{n} &
  \sphere{-1} &= \emptycontext & \sphere{n+1} &= \Susp\sphere{n}
\end{align*}
We will denote the top-dimensional variable of \(\disk n\) by \(\dvar_n\). We
get a substitution \(\typing{\disk n}{\sphereInc_n}[\sphere {n-1}]\) for every
\(n\) by suspending the empty substitution
\(\typing{\disk 0}{\emptysub}[\sphere {-1}]\). The universal properties of
these syntactic objects are given by the following lemma of
\citet[Lemma~2.3.8]{benjaminGlobularWeak$omega$categories2024}:

\begin{lemma}\label{lemma:sub-to-disk}
  There exists a natural bijection between types \(\typing{\Gamma}{A}\) of \catt
  of dimension \(n\) and substitutions
  \(\typing{\Gamma}{\classifierty{A}}[\sphere n]\), and a natural bijection
  between terms \(\typing{\Gamma}{t}[A]\) of
  dimension \(n\) and substitutions
  \(\typing{\Gamma}{\classifiertm{t}}[\disk n]\). Under these bijections,
  composition with \(\sphereInc_n\) sends a term to its type.
\end{lemma}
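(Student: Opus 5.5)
We prove both bijections simultaneously by induction on the dimension \(n\), using the explicit shape of \(\sphere n\) and \(\disk n\) given by iterated suspension. Unfolding the definition, \(\sphere{-1}=\emptycontext\), and \(\sphere{n}\) is \(\disk{n}\) with its top variable \(\dvar_n\) removed. Concretely, \(\sphere n\) is \(\disk{n-1}\) extended by a variable \(\dvar_n'\) (a fresh copy of the top variable) of the same type as \(\dvar_{n-1}\). We fix names so that \(\sphere n = \disk{n-1}\extctx(\dvar_n'\ty A_{n-1})\) and \(\disk n = \sphere{n-1}\extctx(\dvar_n \ty \arr[A_{n-2}]{\dvar_{n-1}}{\dvar'_{n-1}})\), where \(A_k\) denotes the type of the top variable of \(\disk k\). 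We check these identities once by induction, since suspension commutes with context extension. Moreover, \(\sphereInc_n\) is then the inclusion of \(\sphere{n-1}\) as a prefix of \(\disk n\), that is, the identity on the shared variables.

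With this description, a substitution \(\typing{\Gamma}{\sigma}[\disk n]\) is by the substitution rules exactly a pair \((\tau, t)\). Here \(\typing{\Gamma}{\tau}[\sphere{n-1}]\) and \(\typing{\Gamma}{t}[B[\tau]]\), where \(B\) is the type of \(\dvar_n\) in \(\sphere{n-1}\). The substitution \(\tau\) is recovered as \(\sphereInc_n\circ\sigma\), which already gives the last claim. So the term bijection reduces to the type bijection in dimension \(n\). It sends \(t\) of type \(A\) to \(\classifiertm t = \classifierty A\extsub(\dvar_n\mapsto t)\), once we know that \(B[\classifierty A]=A\).

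For the type bijection we proceed by cases on \(n\). For \(n=-1\) (type \(\obj\)), \(\sphere{-1}=\emptycontext\) and there is exactly one substitution \(\emptysub\) and one type \(\obj\) (using \(\ctxty\Gamma\)). For \(n\ge 0\), a type of dimension \(n\) is by the formation rules \(\arr[A']{u}{v}\) with \(u,v\) terms of the same type \(A'\) of dimension \(n-1\). By induction, \(u\) corresponds to a substitution into \(\disk{n-1}\) extending \(\classifierty{A'}\). Adding \(\dvar_n'\mapsto v\) gives a substitution into \(\sphere n\), and conversely every substitution into \(\sphere n\) decomposes this way. We define \(\classifierty{\arr{u}{v}}=\classifiertm{u}\extsub(\dvar'_n\mapsto v)\), and the inverse applies the universal type \(\arr{\dvar_{n}}{\dvar'_{n}}\) (or \(\obj\)) of \(\sphere n\) to the substitution. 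The invariant carried in the induction is that applying the canonical type/term of the sphere/disk to the classifying substitution returns the original type/term. This invariant gives \(B[\classifierty A]=A\) and bijectivity.

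Naturality means that for \(\typing{\Delta}{\gamma}[\Gamma]\) we have \(\classifierty{A[\gamma]}=\classifierty A\circ\gamma\), and similarly for terms. It follows from the inverse description as "apply the universal type/term": \((\mathbb A[\sigma])[\gamma]=\mathbb A[\sigma\circ\gamma]\), using associativity of substitution action, which holds by mutual induction on syntax. The main obstacle I expect is the bookkeeping in the first step: identifying \(\Susp\disk{n}\) and \(\Susp\sphere n\) with the extension descriptions up to \(\alpha\)-renaming, and checking that suspension of the empty substitution yields literally the prefix inclusion. I would handle this by a small lemma, proved by induction, that \(\Susp\) commutes with context extension and with the prefix inclusion substitutions. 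The rest is direct inversion of the typing rules, which is legitimate since \catt's type and substitution rules are syntax-directed.
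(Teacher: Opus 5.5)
Your proof is correct and is essentially the standard argument. The paper does not reprove this lemma but cites Lemma~2.3.8 of earlier work on \catt, and your induction on dimension is exactly that argument: you decompose substitutions into \(\disk n\) and \(\sphere n\) by inverting the extension rule, using \(\classifierty{\arr[A]{u}{v}} = \classifierty{A}\extsub(\dvar_n\mapsto u)\extsub(\dvar'_n\mapsto v)\) and \(\classifiertm{t} = \classifierty{A}\extsub(\dvar_n\mapsto t)\) with \(n=\dim A+1\); these are the classifying maps the paper writes down in \Cref{subsec:walking-equivalence}.

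You should fix the off-by-one in your explicit description of the contexts. From \(\disk 0 = (\dvar_0\ty\obj)\), \(\sphere 0 = \Susp\emptycontext\), and the fact that \(\Susp\) commutes with context extension, one gets \(\sphere n = \disk n\extctx(\dvar'_n\ty A_n)\) and \(\disk{n+1} = \sphere n\extctx(\dvar_{n+1}\ty\arr[A_n]{\dvar_n}{\dvar'_n})\). So \(\sphere n\) is \(\disk{n+1}\) (not \(\disk n\)) minus its top variable, and the term \(u\) in your type step is classified by \(\disk n\) (not \(\disk{n-1}\)). This is the indexing your formula \(\classifierty{\arr{u}{v}}=\classifiertm{u}\extsub(\dvar'_n\mapsto v)\) and your universal type \(\arr{\dvar_n}{\dvar'_n}\) of \(\sphere n\) already presuppose.
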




\section{Weak \texorpdfstring{\(\omega\)}{ω}-categories}
\label{sec:semantics-catt}

In this section, we recall clans as a way to describe models of a type
theory. We then define weak \(\omega\)\-categories to be models of \catt, and
discuss equivalences and equifibrations between weak \(\omega\)\-categories.

\subsection{Models of clans}
\label{subsec:clans}

We view \catt as a \emph{generalised algebraic theory} in the sense of
\citet{cartmellGeneralisedAlgebraicTheories1986} and we use the framework of
\emph{clans} of \citet{joyalNotesClansTribes2017} -- a variant of the categories
with \emph{display maps} of \citet{taylorRecursiveDomainsIndexed1987} -- to
describe its models. This notion of model of \catt coincides with the one
previously used thanks to
\citet[Lemma~1.1.3]{benjaminGlobularWeak$omega$categories2024}.

\begin{definition}\label{def:clan}
  A \emph{clan} is a category \(C\) with a terminal object, equipped with a
  class of arrows containing every isomorphism and every morphism to a terminal
  object, and it is stable under composition and base change. We will call the
  arrows in this class \emph{display maps} and denote them by \(\dismap\).
\end{definition}

By stability under base change, we mean that for every display map
\(p\colon A\dismap B\) and every morphism \(f\colon C\to B\) the following
pullback square exists:
\begin{equation}\label{eq:pb-display}
  \begin{tikzcd}
    D & A \\
    C & B
    \arrow["{p^*f}", dashed, from=1-1, to=1-2]
    \arrow["{f^*p}"', dashed, -{Triangle[open]}, from=1-1, to=2-1]
    \arrow["\lrcorner"{anchor=center, pos=0.125}, draw=none, from=1-1, to=2-2]
    \arrow["p", -{Triangle[open]}, from=1-2, to=2-2]
    \arrow["f"', from=2-1, to=2-2]
  \end{tikzcd}
\end{equation}
and \(f^*p\) is again a display map.

\begin{definition}\label{def:clan-morphism}
  A \emph{morphism of clans} \(C\to D\) is a functor preserving the terminal
  objects, the class of display maps and pullbacks along display maps. A
  \emph{transformation} between morphisms of clans is simply a natural
  transformation of functors. We will denote the strict \(2\)\-category of clans
  by \(\Clan\).
\end{definition}

\begin{example}\label{ex:syntactic-clan}
  The \emph{syntactic category} \(\Syn{\mathbb T}\) of a dependent type theory
  \(\mathbb T\) has as objects contexts up to judgemental equality, and as
  morphisms \(\Gamma\to \Delta\) substitutions
  \(\typing{\Delta}{\gamma}[\Gamma]\) up to judgemental equality. It admits the
  structure of a clan with display maps the closure under composition and
  isomorphisms of the weakening substitutions of the form
  \(\typing{\Gamma\extctx(x\ty A)}{\display_{\Gamma,A}}[\Gamma]\) sending
  each variable of \(\Gamma\) to itself. This class is stable under base change,
  since for every \(\typing{\Delta}{\gamma}[\Gamma]\), the
  following square is a pullback in the syntactic category:
  \begin{equation}\label{eq:pb-display-syntactic}
    \begin{tikzcd}[column sep = 8em]
      {\Delta \extctx (x\ty A[\gamma])}
      & {\Gamma\extctx (x\ty A)}
      \\ \Delta
      & \Gamma
      \arrow["{(\gamma\ \circ\ \display_{\Delta,A[\gamma]})
        \extsub (x\mapsto x)}", from=1-1, to=1-2]
      \arrow["{\display_{\Delta,A[\gamma]}}"',  -{Triangle[open]},
        from=1-1, to=2-1]
      \arrow["\lrcorner"{anchor=center, pos=0.125, rotate=0},
        draw=none, from=1-1, to=2-2]
      \arrow["{\display_{\Gamma,A}}",  -{Triangle[open]},
        from=1-2, to=2-2]
      \arrow["\gamma"', from=2-1, to=2-2]
    \end{tikzcd}
  \end{equation}
  as shown for instance by
  \citet[Lemma~1.1.1]{benjaminGlobularWeak$omega$categories2024}.
\end{example}

\begin{definition}\label{def:clan-models}
  A finitely complete category \(C\) is a clan with every morphism a display
  map. We define the category of \emph{models} of a type theory \(\mathbb T\) to
  be \(\Mod(\mathbb{T}) = \Clan(\Syn{\mathbb T}, \Set)\).
\end{definition}

Since every morphism in \(\Set\) is a display map and pullback squares may be
pasted together, a model of a type theory \(\mathbb T\) is precisely a functor
\(F\colon \Syn{\mathbb T} \to \Set\) that preserves the terminal object and
pullback squares of the form~\eqref{eq:pb-display-syntactic} for every type
\(\typing{\Gamma}{A}\) and substitution \(\typing{\Delta}{\gamma}[\Gamma]\).
As covariant representable functors are continuous, there is a fully faithful
embedding:
\begin{align*}
  \interp{-} \colon \Syn{\mathbb T}^{\op} &\to \Mod(\mathbb T) \\
  \interp{\Gamma}(\Delta) &= \Syn{\mathbb T}(\Gamma, \Delta)
\end{align*}
We note that the category of models is a reflective subcategory of the category
of functors from \(\Syn{\mathbb T}\) to \(\Set\) closed under filtered colimits,
so in particular, it is locally finitely presentable, as explained by
\citet[Remark~2.9]{jonasDualityClans2025}.

\subsection{Models of \texorpdfstring{\catt}{CaTT}}
\label{subsec:catt-models}

\citet{benjaminCaTTContextsAre2024} and
\citet{benjaminGlobularWeak$omega$categories2024} give a precise description of
the syntactic category and the models of \catt respectively, identifying the
former with the category of finite computads of
\citet{deanComputadsWeak$omega$categories2024} and the latter with the category
of weak \(\omega\)\-categories and strict \(\omega\)\-functors of
\citet{leinsterHigherOperadsHigher2003}. These weak \(\omega\)\-categories are
a variant of these by \citet{bataninMonoidalGlobularCategories1998} and they are
models of a \emph{coherator} in the sense of
\citet{grothendieckPursuingStacks1983} and
\citet{maltsiniotisGrothendieck$infty$groupoidsStill2010}, as shown by
\citet{ara$infty$groupoidesGrothendieckVariante2010} and
\citet{bourkeIteratedAlgebraicInjectivity2020}.

\begin{definition}\label{def:omega-cat}
   The category \(\omegacat\) of \emph{weak \(\omega\)\-categories} and
  \emph{strict \(\omega\)\-functors} is the category \(\Mod(\catt)\) of models
  of \catt.
\end{definition}

We define the \emph{\(n\)\-cells} of an \(\omega\)\-category \(X\) to be the
elements of \(X_n = X(\disk{n})\). By the Yoneda lemma, these are in natural
bijection to morphisms \(\interp{\disk{n}}\to X\). The \emph{source} and
\emph{target} functions \(\src,\tgt\colon X_{n+1}\to X_n\) are those given by
composition with the substitutions \(\disk{n+1}\to\disk{n}\) corresponding under
\Cref{lemma:sub-to-disk} to the source and target of the top-dimensional
variable \(\dvar_{n+1}\) of \(\disk{n+1}\) respectively. We write \(x\ty \obj\)
to denote that \(x\in X_0\) and \(x\ty \arr{y}{z}\) to denote that \(x\) is a
positive-dimensional cell with source \(y\) and target~\(z\).

Instantiating \eqref{eq:pb-display-syntactic} to the substitutions given by
\Cref{lemma:sub-to-disk}, we get that every weakening substitution is a pullback
of the substitutions \(\typing{\disk n}{\sphereInc_n}[\sphere {n-1}]\), which
are by construction weakening substitutions as well:
\begin{equation}\label{eq:pb-square-sphereInc}
  \begin{tikzcd}
    {\Gamma\extctx(x \ty A)} & {\disk{n}} \\
    \Gamma & {\sphere{n-1}}
    \arrow["{\classifiertm{x}}", from=1-1, to=1-2]
    \arrow["{\display_{\Gamma,A}}"',  -{Triangle[open]}, from=1-1, to=2-1]
    \arrow["\lrcorner"{anchor=center, pos=0.125}, draw=none, from=1-1, to=2-2]
    \arrow["{\sphereInc_n}",  -{Triangle[open]}, from=1-2, to=2-2]
    \arrow["{\classifierty{A}}"', from=2-1, to=2-2]
  \end{tikzcd}
\end{equation}
It follows from the pasting lemma for pullbacks that a functor
\(X\colon \Syn{\catt}\to \Set\) is a model exactly when \(X(\emptycontext)\) is
terminal and it preserves pullback squares of the
form~\eqref{eq:pb-square-sphereInc}. Moreover, it follows by induction on the
length of a context that a morphism of models \(f\colon X\to Y\) is completely
determined by its values on cells of \(X\), and that it is invertible if and
only if it is bijective on cells.

\begin{definition}\label{def:ops-id-comp}
  Every term \(\typing{\Gamma}{t}[A]\) of dimension \(n\) gives rise to an
  \emph{operation}:
  \begin{gather*}
    \interp{t} \colon \omegacat(\interp{\Gamma},X) \to X_n \\
    \interp{t}(f) = f \circ \interp{\classifiertm{t}}
  \end{gather*}
  In particular, the terms of \Cref{ex:susp-id-comp} give rise to operations
  \begin{align*}
    \id \colon X_n \to X_{n+1} &&
    \comp{-}{-} \colon X_n\times_{X_{n-1}}X_n \to X_n
  \end{align*}
  since morphisms out of their defining contexts correspond to an \(n\)\-cell,
  and a pair of composable \(n\)\-cells respectively. The source and target of
  these operations are given by:
  \begin{mathpar}
    \src(\id x) = \tgt(\id x) = x \and
    \src(\comp{f}{g}) = \src f \and
    \tgt(\comp{f}{g}) = \tgt g
  \end{mathpar}
\end{definition}

\subsection{Invertible cells}
\label{sec:invertibility}

\begin{figure*}
  \begin{mathpar}
  \begin{tikzcd}
	{\gamma(x)} &[1.5cm] {\gamma(y)} & {\gamma(z)} &[.7cm] & & &[.7cm] &[1.5cm] \\[-.5cm]
	& \ast & {} & {\gamma(x)} & {\gamma(y)} & {\gamma(z)} & {\gamma(x)} & {\gamma(z)}\\[-.5cm]
	{\gamma(x)} & {\gamma(y)} & {\gamma(z)}  \\
	\arrow[""{name=0, anchor=center, inner sep=0}, "{\gamma(h)}", curve={height=-50pt}, from=1-1, to=1-2]
	\arrow[""{name=1, anchor=center, inner sep=0}, "{\gamma(f)}"', from=1-1, to=1-2]
	\arrow[""{name=2, anchor=center, inner sep=0}, "{\gamma(g)}"{description}, shift left=2, curve={height=-12pt}, from=1-1, to=1-2]
	\arrow["{\gamma(k)}", from=1-2, to=1-3]
	\arrow[""{name=3, anchor=center, inner sep=0}, "{\gamma(f)}", from=3-1, to=3-2]
	\arrow[""{name=4, anchor=center, inner sep=0}, "{\gamma(h)}"', curve={height=50pt}, from=3-1, to=3-2]
	\arrow[""{name=5, anchor=center, inner sep=0}, "{\gamma(g)}"{description}, shift right=2, curve={height=12pt}, from=3-1, to=3-2]
	\arrow["{\gamma(k)}", from=3-2, to=3-3]
	\arrow[Rightarrow, scaling nfold=3, from=2-3, to=2-4]
	\arrow[""{name=6, anchor=center, inner sep=0}, "{\gamma(h)}", curve={height=-30pt}, from=2-4, to=2-5]
	\arrow[""{name=7, anchor=center, inner sep=0}, "{\gamma(h)}"', curve={height=30pt}, from=2-4, to=2-5]
      \arrow["{\gamma(k)}", from=2-5, to=2-6]
      \arrow[Rightarrow, scaling nfold=3, from=2-6, to=2-7]
      \arrow[""{name = 8, anchor=center, inner sep = 0},
      "{\gamma(h)\ast\gamma(k)}", curve={height=-30pt}, from=2-7, to=2-8]
      \arrow[""{name = 9, anchor=center, inner sep = 0},
      "{\gamma(h)\ast\gamma(k)}"', curve={height=30pt}, from=2-7, to=2-8]
      \arrow["{\gamma(a)^{\leftinv}}"{description}, between={0.2}{0.8}, draw=none, from=0, to=2]
      \arrow["{\gamma(b)^{\leftinv}}"{description}, between={0.2}{0.8}, draw=none, from=2, to=1]
      \arrow["{\gamma(a)}"{description}, between={0.2}{0.8}, draw=none, from=3, to=5]
      \arrow["{\gamma(b)}"{description}, between={0.2}{0.8}, draw=none, from=5, to=4]
      \arrow["{C}"{description}, Rightarrow, between={0.2}{0.8}, from=6, to=7]
      \arrow[Rightarrow, "{\id(\gamma(h)\ast\gamma(k))}"{description},
      between={0.2}{0.8}, from=8, to=9]
    \end{tikzcd}
  \end{mathpar}
  \caption{Left cancellator of a composite of invertible cells \(C = \gamma(b)^{\leftinv}\ast (\gamma(a)^{\leftinv} \ast \gamma(a))
  \ast \gamma(b)\).}
\label{fig:left-unit}
\end{figure*}

We close this section by studying \emph{invertible cells} in
\(\omega\)\-categories. These cells allow us to describe
\emph{weak equivalences} and \emph{equifibrations} between
\(\omega\)\-categories, two classes of maps that play an important role in the
homotopy theory of strict \(\omega\)\-categories, as shown by
\citet{lafontFolkModelStructure2010}, and are expected to play an analogous role
in the weak case.

\begin{definition}\label{def:invertibility-structure}
  An \emph{invertibility structure} on a cell \(f\colon x\to y\) in an
  \(\omega\)\-category \(X\) is coinductively defined to consist of cells:
  \begin{align*}
    f^{\leftinv}&\ty y \to x &
    f^{\rightinv}&\ty y \to x \\
    f^{\leftunit}&\ty \comp{f^{\leftinv}}{f} \to \id y &
    f^{\rightunit}&\ty \comp{f}{f^{\rightinv}} \to \id x
  \end{align*}
  and invertibility structures on \(f^{\leftunit}\) and \(f^{\rightunit}\). We
  say that \(f\) is \emph{invertible} if it admits an invertibility structure.
\end{definition}

We note that this notion of invertible cells is a priori weaker than the usual
one where the left inverse \(f^{\leftinv}\) is strictly equal to the right
inverse \(f^{\rightinv}\). Nonetheless, it has been shown independently by
\citet{riceCoinductiveInvertibilityHigher2020} and by
\citet[Proposition~3.3.1]{fujii$o$equifibrationsStrictWeak2025} that the two
notions coincide. Our use of this definition of invertibility structure is
inspired by work of \citet{hadzihasanovicModelCoherentWalking2024} and of
\citet{fujii$omega$weakEquivalencesWeak2024} on the \emph{walking equivalence}
and \emph{equifibrations} respectively. In analogy with homotopy type
theory~\cite[Theorem~4.3.2]{
  theunivalentfoundationsprograminstituteofadvancedstudyHomotopyTypeTheory2013},
we expect that requiring separate left and right inverses ought to make
invertibility structures on a cell \(f\) unique.

\citet{benjaminInvertibleCells$omega$categories2024} and
\citet{fujiiWeaklyInvertibleCells2023} independently show that invertible cells
are closed under the operations of weak \(\omega\)\-categories. More precisely,
any \(n\)\-dimensional cell constructed from invertible $n$\-cells is again
invertible. This accounts for composites of invertible cells, as well as for
all identity cells and higher coherences like associators and unitors.

\begin{restatable}{theorem}{thminvertibility}
  \label{thm:invertibility-catt}
  Let \(\gamma\colon \interp{\Gamma} \to X\) a morphism of
  \(\omega\)\-categories out of a pasting diagram and let
  \(\fulltyping{\Gamma}{\arr{u}{v}}\) be a full type of dimension \(n\). The cell
  \(\interp{\coh_{\Gamma,\arr{u}{v}}}(\gamma)\) is invertible when
  \(\interp{x}(\gamma)\) is invertible for every \((n+1)\)\-dimensional variable
  \(x\in\Var(\Gamma)\).
\end{restatable}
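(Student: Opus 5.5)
The plan is a coinductive argument in the style of Definition~\ref{def:invertibility-structure}. We do not prove invertibility of \(c = \interp{\coh_{\Gamma,\arr{u}{v}}}(\gamma)\) directly. Instead we define a class \(S\) of \((n+1)\)\-cells of \(X\), for varying \(n\), and show that it is a post-fixed point of the coinductive definition. Concretely, we must show that every cell in \(S\) admits \(f^{\leftinv}, f^{\rightinv}, f^{\leftunit}, f^{\rightunit}\) with \(f^{\leftunit}, f^{\rightunit}\in S\). We take \(S\) to be the cells of the form \(\interp{\coh_{\Delta,B}}(\delta)\), with \(\Delta\) a pasting diagram and \(B\) full of dimension \(m\), such that each \((m+1)\)\-dimensional variable \(x\) of \(\Delta\) is sent by \(\delta\) to a cell that is invertible or lies in \(S\). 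Read coinductively, this condition is consistent. Once \(S\) is shown to be a post-fixed point, every element of \(S\) is invertible, and the statement follows since \(c\in S\).

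First case: \(\dim\Gamma\le n\). Then \(\Gamma\) has no \((n+1)\)\-dimensional variables and \(u,v\) both use all variables of \(\Gamma\). The type \(\arr{v}{u}\) is again full, so we take \(c^{\leftinv}=c^{\rightinv}=\interp{\coh_{\Gamma,\arr{v}{u}}}(\gamma)\). The unit cells \(c^{\leftinv}\ast c\to\id\) and \(c\ast c^{\rightinv}\to\id\) are then obtained by precomposing with the composite substitution a single coherence over the (suspended) composition diagram. Their sources and targets use all variables, so their types are full of the first kind and they again lie in \(S\).

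Second case: \(\dim\Gamma=n+1\) and \(u,v\) are the source and target boundaries. Let \(\Gamma^{\mathrm{op}}\) be \(\Gamma\) with its top-dimensional variables reversed. Define \(c^{\leftinv}=\interp{\coh_{\Gamma^{\mathrm{op}},\arr{v}{u}}}(\gamma^{\leftinv})\), where \(\gamma^{\leftinv}\) sends each top variable \(x\) to \(\gamma(x)^{\leftinv}\), and define \(c^{\rightinv}\) similarly. Both have targets of the correct shape, and both lie in \(S\) since their arguments \(\gamma(x)^{\leftinv}\) are cells of invertible cells. The left unit \(c^{\leftunit}\) is built as in Figure~\ref{fig:left-unit}, as a composite of three \((n+2)\)\-cells:
\begin{itemize}
\item a coherence rearranging \(c^{\leftinv}\ast c\) into an iterated composite that pairs each \(\gamma(x)^{\leftinv}\) with \(\gamma(x)\);
\item whiskerings of the units \(\gamma(x)^{\leftunit}\), which contract these pairs one at a time, in the order dictated by the pasting diagram;
\item a coherence identifying the resulting composite of identities with \(\id(u[\gamma])\).
\end{itemize}
Each piece is a coherence whose top-dimensional arguments are either the invertible cells \(\gamma(x)^{\leftunit}\) or none. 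The composite of the pieces is a single coherence over a composition diagram whose top-dimensional arguments are these pieces, hence is in \(S\). The right unit is handled symmetrically.

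The main obstacle is the second case: writing the rearranging coherence down precisely and checking that the intermediate types are full. We need to describe, for an arbitrary pasting diagram, a sequence of cancellations compatible with the globular structure. We would first proceed by induction on the construction of \(\Gamma\) via the rules \nameref{rule:pss}, \nameref{rule:psd}, \nameref{rule:pse}, cancelling the last-added top variable at each step. If that becomes unwieldy, we would instead exploit contractibility: any two parallel terms over a pasting diagram using the right variables are related by a coherence. This reduces the construction to exhibiting the intermediate pasting diagrams, and sidesteps explicit rearrangement formulas.
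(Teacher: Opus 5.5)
Your construction follows the paper's route: inverses obtained by reversing the top-dimensional variables and substituting their inverses (or via \(\coh_{\Gamma,\arr{v}{u}}\) when \(\dim\Gamma\le n\)), and cancellators built from a generalised associator, whiskered cancellators of the arguments, and a generalised unitor. The gap is in how you set up the coinduction. The class \(S\) must be defined inductively, not ``read coinductively'', because the largest class satisfying your condition can contain non-invertible cells. Take the walking arrow, viewed as a strict and hence weak \(\omega\)-category. There the unary composite \(\interp{\coh_{\disk{1},\arr{\dvar_0^-}{\dvar_0^+}}}\) acts as the identity on \(1\)-cells, so the generator \(f\) is a coherence cell whose only top-dimensional argument is \(f\) itself. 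Hence \(\{f\}\) satisfies your condition, and the coinductive \(S\) contains \(f\), which has no inverse. The same problem shows up in the construction. Building \(c^{\leftinv}\), and the middle piece of \(c^{\leftunit}\), for \(c\in S\) needs the inverses and units of those arguments of \(c\) that lie in \(S\), and nothing makes this regress terminate.

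The fix is to let \(S_0\) be the coherence cells whose top arguments are invertible, \(S_{k+1}\) those whose top arguments are invertible or lie in \(S_k\), and \(S=\bigcup_k S_k\). Then prove by induction on \(k\) that each \(c\in S_k\) has \(c^{\leftinv}, c^{\rightinv}\) and cancellators lying in \(S\). Your pieces do land in \(S\):
\begin{itemize}
\item the associator and unitor have no top arguments;
\item the whiskerings have as top arguments the units of the arguments of \(c\), which are invertible or in \(S\) by the induction hypothesis;
\item a composite of cells of \(S\) is again in \(S\).
\end{itemize}
So \(S\) is a post-fixed point and coinduction applies. This also makes precise the nesting that the paper's sketch compresses into ``the units satisfy the hypothesis of the theorem''.

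There are also two smaller corrections. First, drop the claim \(c^{\leftinv}, c^{\rightinv}\in S\). It is not needed, since an invertibility structure asks nothing of the inverses. Your reason also fails at this stage: \(\gamma(x)^{\leftinv}\) is an arbitrary cell not yet known to be invertible (that is Proposition~\ref{prop:left-inv}, proved later via the recursor). Second, in the case \(\dim\Gamma\le n\) the cancellator is the coherence over \(\Gamma\) itself, with type from \(\coh_{\Gamma,\arr{v}{u}}[\id]\ast\coh_{\Gamma,\arr{u}{v}}[\id]\) to \(\id v\). Your ``uses all variables'' argument gives fullness over \(\Gamma\), not over a composition diagram.

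Your contractibility fallback for the rearrangement is exactly the paper's generalised associator. Note that \(c^{\leftinv}\ast c\) is an endo-cell of \(v[\gamma]\). It is therefore a term over the pasting diagram obtained by gluing \(\op_{n+1}\Gamma\) to \(\Gamma\) along the \(u\)-boundary, which has two separate copies of the \(v\)-boundary identified only by the substitution.
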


The proof of the theorem coinductively constructs an invertibility structure on
cells of that form with left inverse equal to the right inverse. In the case
that \(\dim \Gamma \le n\), the condition on \(\Gamma\) is vacuous, the inverses
are given by \(\interp{\coh_{\Gamma,\arr{v}{u}}}(\gamma)\) and the cancellators
are again of the form \(\interp{\coh_{\Gamma,A}}(\gamma)\). The case
\(\dim \Gamma = n+1\) corresponds to composites of invertible cells and is
partially illustrated in \Cref{fig:left-unit} where the first \(3\)\-cell is a
generalised associator, and the second is a composition of cancellators and
unitors. A more detailed explanation of this construction can be found in
\Cref{app:proof-invert}. Moreover,
\citet[Proposition~37]{benjaminInvertibleCells$omega$categories2024} show a
converse of \Cref{thm:invertibility-catt} when \(X\) is a \catt context:

\begin{proposition}\label{prop:invertible-cells-catt-context}
  A term \(\typing{\Delta}{t}[A]\) of dimension \(n\) is invertible in
  \(\interp{\Delta}\) if and only if it is of the form
  \(t = \coh_{\Gamma,A}[\gamma]\) with \(x[\gamma]\) invertible for every
  \(n\)\-dimensional \(x\in\Var(\Gamma)\).
\end{proposition}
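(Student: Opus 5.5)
The \emph{if} direction follows from \Cref{thm:invertibility-catt}. By the Yoneda lemma, the substitution \(\typing{\Delta}{\gamma}[\Gamma]\) corresponds to a morphism \(\interp{\Gamma}\to\interp{\Delta}\), and a term of \(\Delta\) is a cell of \(\interp{\Delta}\). Under these identifications, \(\interp{\coh_{\Gamma,A}}(\gamma)\) is the term \(\coh_{\Gamma,A}[\gamma]\) and \(\interp{x}(\gamma)\) is \(x[\gamma]\). Here \(t\) has dimension \(n\), so \(A\) has dimension \(n-1\), and the variables required to be invertible in \Cref{thm:invertibility-catt} are exactly the \(n\)\-dimensional variables of \(\Gamma\). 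So the theorem applies directly.

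For the \emph{only if} direction I would argue by contraposition and prove two facts. First, no variable of \(\Delta\) is invertible in \(\interp{\Delta}\). Second, a coherence \(\coh_{\Gamma,A}[\gamma]\) with some \(n\)\-dimensional \(x\in\Var(\Gamma)\) such that \(x[\gamma]\) is not invertible is itself not invertible. I would prove both simultaneously by well-founded induction on the size of terms of dimension \(n\). The tool is a syntactic invariant of \(n\)\-dimensional terms of \(\Delta\): the multiset of ``non-invertible \(n\)\-dimensional atoms'' a term contains. Concretely, define \(\mathrm{occ}(t)\) recursively: a variable \(y\) of dimension \(n\) contributes itself, and a coherence contributes the union of \(\mathrm{occ}(x[\gamma])\) over its \(n\)\-dimensional variables \(x\) (with the convention that invertible pieces contribute nothing). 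I would then show that \(\mathrm{occ}\) is preserved along \((n+1)\)\-dimensional terms. That is, if \(\typing{\Delta}{s}[\arr{u}{v}]\) then \(\mathrm{occ}(u)=\mathrm{occ}(v)\), unless \(s\) itself uses an \((n+1)\)\-dimensional variable of \(\Delta\) whose source and target already differ in this way. This relies on fullness of the type of a coherence, which forces the source and target of a coherence of dimension \(n+1\) to use the same \(n\)\-dimensional variables, and on pasting diagrams containing each variable exactly once.

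Granting this, suppose \(t\) were invertible with \(\mathrm{occ}(t)\neq\emptyset\). The cancellator \(t^{\leftunit}\colon \comp{t^{\leftinv}}{t}\to\id\) has target \(\id\), whose invariant is empty. So its source must also have empty invariant, unless the witnessing cell is built from \((n+1)\)\-dimensional variables of \(\Delta\) relating terms with different invariants. For these I would recurse, using that the cancellators are themselves invertible (coinductively), and reduce to a strictly smaller configuration. Since \(\mathrm{occ}(\comp{t^{\leftinv}}{t})\supseteq\mathrm{occ}(t)\neq\emptyset\), this is the contradiction.

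I expect the main obstacle to be making this invariant well defined and handling the higher-dimensional variables of \(\Delta\). A context \(\Delta\) may contain \((n+1)\)\-cells whose source and target are arbitrary terms, so the invariant is not literally preserved. The argument must then show that such generators cannot ``cancel'' a non-invertible atom without an inverse generator that is itself already invertible. If the syntactic invariant becomes too delicate, I would instead try to build, for each non-invertible atom, a strict \(\omega\)\-functor out of \(\interp{\Delta}\) into a suitable strict \(\omega\)\-category in which its image is visibly non-invertible. Such a target would be a free strict \(\omega\)\-category (polygraph) on the underlying computad of \(\Delta\), where invertibility is detectable by the length of cells. I would then use that strict functors preserve invertibility structures, since these are defined by operations and therefore transported along morphisms of models.
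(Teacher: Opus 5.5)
Your \emph{if} direction is correct. The \emph{only if} direction has a genuine gap, and it is exactly the step you defer: ruling out that the cancellator $t^{\leftunit}\colon \arr{\comp{t^{\leftinv}}{t}}{\id v}$ uses $(n+1)$-dimensional variables of $\Delta$ to cancel the atoms of $t$.

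Your phrase ``recurse, using that the cancellators are invertible, and reduce to a strictly smaller configuration'' names no measure that decreases. Your induction is on the size of $n$-dimensional terms, but the cancellator lives one dimension up and can be arbitrarily large. More importantly, nothing in your sketch uses that $\Delta$ is a \emph{finite} context, and without this the statement is false. The walking equivalence $\Equiv{1}_{\OR}$ is a computad with infinitely many generators, and its generator $u$ is invertible by \Cref{thm:colimit} although it is not a coherence. Your invariant and your cancellator argument make equally good sense for its cells. So the unresolved step is precisely where finiteness must enter.

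Your fallback has the same problem. Lengths in a free strict $\omega$-category are not preserved by $(n+1)$-cells: a generator $a\colon \arr{\comp{g}{f}}{\id y}$ relates $1$-cells of length $2$ and $0$. In the free strict $\omega$-category on the polygraph underlying $\Equiv{1}_{\OR}$, the length-one generator $u$ is invertible. ``Invertible iff identity'' does hold for finite-dimensional polygraphs, but proving it needs the very argument that is missing.

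Two smaller issues. Defining $\mathrm{occ}$ so that ``invertible pieces contribute nothing'' is circular. Also, fullness does \emph{not} make the source and target of an $(n+1)$-dimensional coherence $\coh_{\Gamma,B}[\gamma]$ use the same $n$-dimensional variables when $\dim\Gamma=n+1$: they differ by the boundaries of the $(n+1)$-dimensional variables of $\Gamma$.

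The missing idea is a \emph{downward} induction on dimension, starting above $\dim\Delta$.

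\emph{The invariant.} Define $\mathrm{occ}_k$ purely syntactically by $\mathrm{occ}_k(y)=\{y\}$ for a $k$-dimensional variable $y$, and $\mathrm{occ}_k(\coh_{\Gamma,B}[\gamma])=\bigcup_{x\in\Var_k(\Gamma)}\mathrm{occ}_k(x[\gamma])$. Sets suffice.

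\emph{The lemma.} If $s\colon\arr{u}{v}$ is a $(k+1)$-cell with $\mathrm{occ}_{k+1}(s)=\emptyset$, then $\mathrm{occ}_k(u)=\mathrm{occ}_k(v)$. Prove it by induction on the size of $s$, using $\mathrm{occ}_k(w[\gamma])=\bigcup_{z\in\mathrm{occ}_k(w)}\mathrm{occ}_k(z[\gamma])$ and fullness. When $\dim\Gamma=k+1$, apply the induction hypothesis to the $(k+1)$-dimensional arguments to follow the chains from source to target variables.

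\emph{The downward induction.} Prove, for $k$ running from $\dim\Delta+1$ down to $n$, that invertible $k$-cells have empty $\mathrm{occ}_k$.
\begin{itemize}
\item For $k>\dim\Delta$ there are no $k$-dimensional variables, so $\mathrm{occ}_k$ is always empty. This is exactly where finiteness of $\Delta$ is used.
\item If the claim holds in dimension $k+1$ and $t$ is an invertible $k$-cell, then its cancellator $t^{\leftunit}$ is invertible, so $\mathrm{occ}_{k+1}(t^{\leftunit})=\emptyset$. The lemma then gives $\mathrm{occ}_k(t)\subseteq\mathrm{occ}_k(\comp{t^{\leftinv}}{t})=\mathrm{occ}_k(\id v)=\emptyset$.
\end{itemize}

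\emph{Conclusion.} A term with empty $\mathrm{occ}_n$ cannot be a variable. So it is a coherence whose $n$-dimensional arguments have empty $\mathrm{occ}_n$. By induction on size and \Cref{thm:invertibility-catt}, every such term is invertible, which gives the required form.
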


Invertibility plays a crucial role in the definition of weak equivalences of
\(\omega\)-categories, that is \(\omega\)\-functors that are essentially
surjective on objects and iterated hom-sets. These equivalences are conjectured
to be part of a model structure with cofibrations generated by the inclusions
of spheres into disks. Invertibility is also used in the definition of
equifibrations of \citet{fujii$omega$weakEquivalencesWeak2024}, an
analogue of isofibrations of categories, that is expected to play an important
role in the conjectured model structure in analogy to the strict
case~\cite{lafontFolkModelStructure2010}.

\begin{definition}\label{def:equifibration}
  An \(\omega\)-functor \(f \ty X \to Y\) is an \(\omega\)\-equifibration
  between \(\omega\)\-categories when for every \(n\)\-cell \(x\) of \(X\)
  together with an invertible cell \(e \ty y \to f(x)\) in \(Y\), there exists
  an invertible \(n\)-cell \(\bar{e} : \bar{y} \to x\) in \(X\) such that
  \(f\bar{e} = e\).
\end{definition}


\section{The type theory \texorpdfstring{\cattinv}{ICaTT}}
\label{sec:cattinv}

In this section, we define the dependent type theory \cattinv as an extension of
\catt that allows equipping terms with invertibility structures.

\subsection{The raw syntax}
\label{subsec:cattinv-syntax}

We extend first
the raw syntax of \catt with a single type constructor \(\Inv_{A}(u)\) where
\(A\) is a type and \(u\) is a term and nine term constructors:
\begin{gather*}
  \can(t,\{e_{x}\}) \\
  \begin{aligned}
    \coind(t,t_{l},t_{r},t_{lu},t_{ru},t_{ilu},t_{iru}) & &
    \rec(t,t_{l},t_{r},t_{lu},t_{ru},t_{ilu},t_{iru},\gamma)
  \end{aligned}\\
  \begin{aligned}
    \leftinv(t)
    & & \rightinv(t)
    & & \leftunit(t)
    & & \rightunit(t)
    & & \leftunitwitness(t)
    & & \rightunitwitness(t)
  \end{aligned}
\end{gather*}
where \(\gamma\) is a substitution, \(\{e_{x}\}\) is a family of terms indexed
by a set of variables, and the remaining arguments are terms. We extend the
action of raw substitutions on these new types and terms by letting:
\begin{align*}
   \can(t,\{e_{x}\})[\sigma]\! &=\!
  \can(t[\sigma],\{e_{x}[\sigma]\}) \\
  \rec(t,t_{l},t_{r},t_{lu},t_{ru},t_{ilu},t_{iru},\gamma)[\sigma]
  \!&=\!
    \rec(t,t_{l},t_{r},t_{lu},t_{ru},t_{ilu},t_{iru},\gamma\circ\sigma)
\end{align*}
and making substitutions compute under every other type and term constructor, as
presented in \Cref{app:cattinv}. We extend furthermore the action of
the suspension operation on the raw syntax by letting:
\begin{gather*}
  \begin{aligned}
    \Susp\!\coind(t,t_{l},t_{r},t_{lu},t_{ru},t_{ilu},t_{iru})
    \!&=\! \coind(\Susp t,\Susp t_{l}, \Susp t_{r}, \Susp t_{lu}, \Susp
      t_{ru} , \Susp t_{ilu}, \Susp t_{iru}) \\
    \Susp\!\rec(t,t_{l},t_{r},t_{lu},t_{ru},t_{ilu},t_{iru},\gamma)
    \!&=\! \rec(\Susp t,\!\Susp t_{l}, \!\Susp t_{r}, \!\Susp t_{lu},\! \Susp
      t_{ru} ,\! \Susp t_{ilu},\! \Susp t_{iru}, \!\Susp\gamma)
  \end{aligned} \\
  \begin{aligned}
    \Susp(\Inv_{A}(t))
    \!&=\! \Inv_{\Susp A}(\Susp t) &
    \Susp(\can(t,\{e_{x}\}))
    \!&=\! \can(\Susp t, \{\Susp e_{x}\}) \\
    \Susp(\leftinv(e)) &= \leftinv(\Susp e) &
    \Susp(\rightinv(e)) &= \rightinv(\Susp e) \\
    \Susp(\leftunit(e)) &= \leftunit(\Susp e) &
    \Susp(\rightunit(e)) &= \rightunit(\Susp e) \\
    \Susp(\leftunitwitness(e)) &= \leftunitwitness(\Susp e) &
    \Susp(\rightunitwitness(e)) &= \rightunitwitness(\Susp e)
  \end{aligned}
\end{gather*}

\subsection{The type of invertibility structures}
\label{subsec:Inv-type}

Having described the raw syntax, we proceed to give the rules of \cattinv.
We assume first that every rule of \catt is inherited by \cattinv. The
introduction rule for the type \(\Inv\) of \emph{invertibility structures} is
given by:
\begin{mathpar}
  \inferdef[\(\Inv\)-intro]{
    \typing{\Gamma}{t}[\arr[A]{u}{v}]
  }{
    \typing{\Gamma}{\Inv_{\arr[A]{u}{v}}(t)}
  }\label{rule:inv-intro}
\end{mathpar}
For the sake of readability, we sometimes omit the index \(A\) when it can be
inferred. In order for terms \(\typing{\Gamma}{e}[\Inv(t)]\) to equip \(t\) with
an invertibility structure, we provide six destructors and a dual constructor:
\begin{mathparpagebreakable}
  \inferrule{
    \typing{\Gamma}{e}[\Inv_{\arr{u}{v}}(t)] }
    { \typing{\Gamma}{\leftinv(e)}[\arr{v}{u}] }
    \and
  \inferrule{
    \typing{\Gamma}{e}[\Inv_{\arr{u}{v}}(t)] }
    { \typing{\Gamma}{\rightinv(e)}[\arr{v}{u}] }
  \and
  \inferrule{
    \typing{\Gamma}{e}[\Inv_{\arr{u}{v}}(t)] }
    { \typing{\Gamma}{\leftunit(e)}[\arr{\comp{\leftinv(e)\!}{\!t}\!}{\!\id v}]}
  \and
  \inferrule{
    \typing{\Gamma}{e}[\Inv_{\arr{u}{v}}(t)] }
    { \typing{\Gamma}{\rightunit(e)}[\arr{\comp{t\!}{\!\rightinv(e)}\!}{\!\id u}]}
  \and
  \inferrule{
    \typing{\Gamma}{e}[\Inv_{\arr{u}{v}}(t)] }
    { \typing{\Gamma}{\leftunitwitness(e)}[\Inv(\leftunit(e))] }
  \and
  \inferrule{
    \typing{\Gamma}{e}[\Inv_{\arr{u}{v}}(t)] }
    {\typing{\Gamma}{\rightunitwitness(e)}[\Inv(\rightunit(e))] }
  \and
  \inferdef[\(\coind\)-intro]{
    \typing{\Gamma}{t}[\arr{x}{y}]\\
    \typing{\Gamma}{t_l}[\arr{y}{x}]\\
    \typing{\Gamma}{t_r}[\arr{y}{x}]\\\\
    \typing{\Gamma}{t_{lu}}[\arr{\comp{t_l}{t}}{\id y}] \\
    \typing{\Gamma}{t_{ru}}[\arr{\comp{t}{t_r}}{\id x}] \\
    \typing{\Gamma}{t_{ilu}}[\Inv(t_{lu})] \\
    \typing{\Gamma}{t_{iru}}[\Inv(t_{ru})]
  }{
    \typing{\Gamma}{\coind(t,t_{l},t_{r},t_{lu},t_{ru},t_{ilu},t_{iru})}[\Inv(t)]
  }
  \label{rule:coind-intro}
\end{mathparpagebreakable}
Additionally, we require the usual \(\beta\)\-reduction and \(\eta\)\-expansion
rules for these constructors, presented fully in \Cref{fig:beta-eta-coind}. For
instance, we require the following \(\eta\)\- and \(\beta\)\-rules:
\begin{gather*}
  e \equiv
    \coind(t,\leftinv(e),\rightinv(e),\leftunit(e),\rightunit(e),
    \leftunitwitness(e),\rightunitwitness(e)) \\
  \leftinv(\coind(t,t_{l},t_{r},t_{lu},t_{ru},t_{ilu},t_{iru})) \equiv t_l
\end{gather*}

\subsection{Invertibility of coherence terms}
\label{subsection:canonical-inv}

\Cref{thm:invertibility-catt} provides a way to construct invertibility
structures for coherence cells. The term constructor \(\can\) allows us to
internalise this theorem in \cattinv. Its introduction rule is given by:
\begin{mathpar}
  \inferdef[\(\can\)-intro\!]{
    \typing{\Delta}{\coh_{\Gamma,A}[\gamma]\!}[\!A[\gamma]]\;\;\;
    \set{\typing{\Delta}{e_x\!}[\!\Inv(x[\gamma])]}_{x\in \Var_{\dim A +
        1}(\Gamma)} }{
    \typing{\Delta}{\can(\coh_{\Gamma,A}[\gamma],
      \set{e_x})}[\Inv(\coh_{\Gamma,A}[\gamma])]
  }
  \label{rule:can-intro}
\end{mathpar}
where \(\Var_n(\Gamma)\) denotes the set of variables of \(\Gamma\) of dimension
\(n\). Furthermore, we require the following computation rules identifying the
invertibility structure obtained by the rule with the one obtained by the
theorem:
\begin{gather*}
  \begin{aligned}
    \leftinv(\can(t,\set{e_x}))
    &\equiv t^{\leftinv} &
    \rightinv(\can(t,\set{e_x}))
    &\equiv t^{\rightinv} \\
    \leftunit(\can(t,\set{e_x}))
    &\equiv t^{\leftunit} &
    \rightunit(\can(t,\set{e_x}))
    &\equiv t^{\rightunit} \\
  \end{aligned}\\
  \begin{aligned}
    \leftunitwitness(\can(t,\set{e_x}))
    &\equiv \can(t^{\leftunit}, \set{-}) \\
    \rightunitwitness(\can(t,\set{e_x}))
    &\equiv \can(t^{\rightunit}, \set{-})
  \end{aligned}
\end{gather*}
The last two rules correspond to the fact that the left and right units
are coinductively equipped with the structure required by the premises of the
rule. A more detailed description can be found in \Cref{app:proof-invert}.

\subsection{The walking equivalence}
\label{subsec:walking-equivalence}

Before stating the last rule of \cattinv, we introduce the walking equivalence,
a context classifying invertibility structures, as well as the classifying
substitution of a term and a type.

\begin{definition}\label{def:walking-equivalence}
  The \emph{walking equivalence} for \(n\in\N\) is the context:
  \begin{align*}
    \Equiv{n+1} &= \disk{n+1}\extctx(\evar_{n+1}\ty\Inv(\dvar_{n+1}))
  \end{align*}
  We will denote its weakening substitution by
  \(\typing{\Equiv{n+1}}{\equivDisk_{n+1}}[\disk{n+1}]\).
\end{definition}

An analogue of \Cref{lemma:sub-to-disk} holds for \cattinv, namely types are
classified by spheres and disks, and terms are classified by disks and the
walking equivalences: we can assign by induction on the syntax to every type
\(A\) a substitution \(\chi_A\) by:
\begin{align*}
  \chi_\obj
    &= \emptysub &
  \chi_{\Inv_{A}(t)}
    &= \chi_A \extsub (\dvar_{n+1}\mapsto t) \\
  \mathrlap{\chi_{\arr[A]{u}{v}}
    = \chi_A \extsub (\dvar^-_n\mapsto u)\extsub (\dvar^+_n\mapsto v)}
\end{align*}
where \(n = \dim A + 1\) and \(\dvar_n^\pm\) are the last two variables of
\(\sphere{n}\). We can also assign to each pair of a term \(t\) and a type \(A\)
a substitution \(\chi_{t,A}\) by letting
\begin{equation}
    \chi_{t,A}
      = \begin{cases}
        \chi_A \extsub (\dvar_{n+1}\mapsto t)
          &\text{when } A = \obj \text{ or } A = \arr{u}{v} \\
        \chi_A \extsub (\evar_{n+1}\mapsto t)
          & \text{when } A = \Inv(u)
      \end{cases}
\end{equation}
where again \(n = \dim A + 1\). We will see that when \(\typing{\Gamma}{A}\) is
a type, either \(\typing{\Gamma}{\classifierty{A}}[\sphere n]\) or
\(\typing{\Gamma}{\classifierty{A}}[\disk {n+1}]\) depending on whether it is an
\(\Inv\) type or not. Similarly for \(\typing{\Gamma}{t}[A]\), we will have that
\(\typing{\Gamma}{\classifiertm{t,A}}[\disk n]\) or
\(\typing{\Gamma}{\classifierequiv{t,A}}[\Equiv{n+1}]\), in which case we will
denote \(\classifierequiv{t} = \classifierequiv{t,A}\). The importance of these
substitutions comes from \Cref{lemma:famillial-representability}

\subsection{Recursor for invertibility structures}
\label{subsec:cattinvco}

We conclude the presentation of the rules of \cattinv with the rule for the term
constructor \(\rec\) that allows constructing terms of type \(\Inv\)
recursively. To describe recursion in dependent type theory, one traditionally
requires the existence of function types. However, equipping \catt with function
types is not straightforward, since its category of models is not cartesian
closed -- the same obstruction appears in directed homotopy type theory as well
and several approaches have been developed to circumvent it~\cite{
riehl2017type,north2019towards,altenkirch2024synthetic,laretto2026di}. For
that reason, we model recursive calls in our theory by extending the context
with the inductive hypothesis.

More precisely, given a term \(\typing{\Equiv{n+1}}{t}[\arr{u}{v}]\) we define
the extended context
\begin{align*}
  \EquivInd{n+1}{t} = &\ \Equiv{n+1} \extctx
    (h^-\ty \Inv(\Susp t)[\classifierequiv{\leftunitwitness(\evar_{n+1})}])
    \extctx \\
    &(h^+\ty \Inv(\Susp t)[\classifierequiv{\rightunitwitness(\evar_{n+1})}])
\end{align*}
containing two new variables providing the inductive hypotheses. The
introduction rule for \(\rec\) is similar to the one for \(\coind\) except
that the context in the last two premises has been extended by the inductive
hypotheses:
\[
  \inferdef[\(\rec\)-intro]{
    \typing{\Equiv{n+1}}{t}[\arr{x}{y}]\\
    \typing{\Gamma}{\gamma}[\Equiv{n+1}] \\
    \typing{\Equiv{n+1}}{t_l}[\arr{y}{x}]\\
    \typing{\Equiv{n+1}}{t_r}[\arr{y}{x}]\\
    \typing{\Equiv{n+1}}{t_{lu}}[\arr{\comp{t_l}{t}}{\id y}] \\
    \typing{\Equiv{n+1}}{t_{ru}}[\arr{\comp{t}{t_r}}{\id x}] \\
    \typing{\EquivInd{n+1}{t}}{t_{ilu}}[\Inv(t_{lu})] \\
    \typing{\EquivInd{n+1}{t}}{t_{iru}}[\Inv(t_{ru})] \\
    }{
    \typing{\Gamma}{\rec(t,t_l,t_r,t_{lu},t_{ru},t_{ilu},t_{iru},\gamma)}[\Inv(t[\gamma])]
  }\label{rule:rec-intro}
\]
We additionally require computation rules for this
constructor, in which we substitute the extra variables of the context
\(\EquivInd{n+1}{t}\) for actual recursive calls. To this end, given a term
\(\typing{\Equiv{n+1}}{r}[\Inv(t)]\), we define a substitution
\(\typing{\Equiv{n+1}}{\instantiation{r}}[\EquivInd{n+1}{t}]\) which
instantiates the variables of \(\EquivInd{n+1}{t}\) with recursive calls to the
construction as follows:
\begin{align*}
  \instantiation{r} = &\id_{\Equiv{n+1}}\extsub
  (h^-\mapsto\Susp(r)[\classifierequiv{\leftunitwitness(\evar_{n+1})}]) \extsub
  \\ &(h^+\mapsto\Susp(r)[\classifierequiv{\rightunitwitness(\evar_{n+1})}])
\end{align*}
The full \(\beta\)\-rules are given in \Cref{fig:beta-rec}. The cases of the
destructors producing arrows are similar to the \(\beta\)-rules for \(\coind\).
The remaining ones for \(e = \rec(t, t_{l}, t_{r}, t_{lu}, t_{ru}, t_{ilu},
t_{iru}, \gamma)\) are given by
\begin{align*}
  \leftunitwitness e
  &=  t_{ilu}[\instantiation{e} \circ\ \gamma]
  & \rightunitwitness e
  &=  t_{iru}[\instantiation{e} \circ\ \gamma]
\end{align*}
This concludes the definition of the theory \cattinv. A complete account of this
theory can be found in \Cref{app:cattinv}.

\subsection{Meta-theoretic considerations}
\label{subsec:metatheory}

We conclude this section by recording some simple consequences of the
definition, and discussing normalisation for the theory \cattinv.

\begin{lemma}
  Substitution preserves typing in \cattinv, in that the following rules are
  admissible:
  \begin{align*}
    \inferrule{
      \typing{\Delta}{\gamma}[\Gamma]\;\;\;
      \typing{\Gamma}{A}
    }{\typing{\Delta}{A[\gamma]}} &&
    \inferrule{
      \typing{\Delta}{\gamma}[\Gamma]\;\;\;
      \typing{\Gamma}{t}[A]
    }{\typing{\Delta}{t[\gamma]}[{A[\gamma]}]} &&
    \inferrule{
      \typing{\Delta}{\gamma}[\Gamma]\;\;\;
      \typing{\Gamma}{\gamma'}[\Gamma']
    }{\typing{\Delta}{\gamma'\circ\gamma}[\Gamma']}
  \end{align*}
\end{lemma}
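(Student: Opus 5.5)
We argue by a simultaneous induction on the derivations of \(\typing{\Gamma}{A}\), \(\typing{\Gamma}{t}[A]\) and \(\typing{\Gamma}{\gamma'}[\Gamma']\), proving the three admissibility statements together. Since judgemental equality enters the typing rules (through conversion and the \(\beta\)/\(\eta\)-rules of \Cref{fig:beta-eta-coind} and \Cref{fig:beta-rec}), we strengthen the statement to include stability of judgemental equality under substitution: \(t\equiv t'\) implies \(t[\gamma]\equiv t'[\gamma]\), and similarly for types. Before the induction we record the usual syntactic identities on raw syntax: \(A[\gamma'][\gamma] = A[\gamma'\circ\gamma]\), \(t[\gamma'][\gamma]=t[\gamma'\circ\gamma]\), associativity of composition, and \(\id\circ\gamma=\gamma\). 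These hold by a routine structural induction. The two nonstandard cases are \(\coh\) and \(\rec\), whose substitution action composes the stored substitution, \(\rec(\dots,\gamma')[\gamma]=\rec(\dots,\gamma'\circ\gamma)\). We also need that the derived types in the destructor rules commute with substitution, for example \((\comp{\leftinv(e)}{t})[\gamma] = \comp{\leftinv(e[\gamma])}{t[\gamma]}\). This follows because \(\comp{-}{-}\) and \(\id\) are \(\coh\)-terms and substitution computes through \(\leftinv\).

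The inductive cases then run as follows.
\begin{itemize}
\item \emph{Structural and \catt rules.} The variable rule uses the premise \(\typing{\Delta}{\gamma}[\Gamma]\), by induction on how \(\gamma\) was built. The \(\obj\)- and \(\to\)-intro rules are immediate. For \(\coh\)-intro we use \(\coh_{\Gamma',A}[\delta][\gamma]=\coh_{\Gamma',A}[\delta\circ\gamma]\) together with the induction hypothesis for composition. The side conditions on the pasting diagram \(\Gamma'\) and on fullness are untouched, and the type is \(A[\delta][\gamma]=A[\delta\circ\gamma]\).
\item \emph{\(\Inv\)-intro, the six destructors and \(\coind\)-intro.} Each premise is substituted by the induction hypothesis. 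The conclusion type matches by the commutation identities above.
\item \emph{\(\can\)-intro.} The premise \(\coh_{\Gamma',A}[\delta]\) becomes \(\coh_{\Gamma',A}[\delta\circ\gamma]\), and the family \(e_x[\gamma]\) has type \(\Inv(x[\delta])[\gamma]=\Inv(x[\delta\circ\gamma])\), as the rule requires.
\item \emph{\(\rec\)-intro.} The premises over \(\Equiv{n+1}\) and \(\EquivInd{n+1}{t}\) are closed and are not substituted. Only the premise \(\typing{\Gamma}{\gamma'}[\Equiv{n+1}]\) changes, and the induction hypothesis gives \(\typing{\Delta}{\gamma'\circ\gamma}[\Equiv{n+1}]\). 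The conclusion type is \(\Inv(t[\gamma'\circ\gamma]) = \Inv(t[\gamma'])[\gamma]\).
\item \emph{Substitution extension.} For \(\gamma'\extsub(x\mapsto t)\) we use \((\gamma'\extsub(x\mapsto t))\circ\gamma=(\gamma'\circ\gamma)\extsub(x\mapsto t[\gamma])\) and \(A[\gamma'][\gamma]=A[\gamma'\circ\gamma]\).
\end{itemize}

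The main obstacle is the equality part, namely checking that every \(\beta\)/\(\eta\)-rule is stable under substitution. Three groups of rules need separate treatment.
\begin{itemize}
\item \emph{\(\coind\) rules.} These are stable because substitution is homomorphic.
\item \emph{\(\can\) rules.} We need that \(t^{\leftinv}\), \(t^{\leftunit}\), etc.\ from \Cref{thm:invertibility-catt} and \Cref{app:proof-invert} commute with substitution, \((\coh_{\Gamma',A}[\delta])^{\leftinv}[\gamma]=(\coh_{\Gamma',A}[\delta\circ\gamma])^{\leftinv}\). I would establish this first by inspecting the construction: it is given by coherences over \(\Gamma'\) applied to \(\delta\) and the \(e_x\), so it is natural in \(\delta\).
\item \emph{\(\rec\) rules.} The right-hand side \(t_{ilu}[\instantiation{e}\circ\gamma']\) substitutes to \(t_{ilu}[\instantiation{e}\circ\gamma'\circ\gamma]\). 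This equals the \(\beta\)-reduct of \(e[\gamma]\), because \(\instantiation{e}\) mentions \(e\) only through \(\Susp(e)\) over the closed context \(\Equiv{n+1}\). That is precisely why \(\rec\) carries its substitution explicitly rather than substituting into the closed premises.
\end{itemize}
Finally, conversion steps and congruence rules are stable by the induction hypothesis.
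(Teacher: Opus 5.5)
Your proposal is correct and takes the same approach as the paper. The paper proves the lemma by a one-line induction on the syntax, using that every rule of \cattinv, including the \(\coind\), \(\can\) and \(\rec\) computation rules, is stable under substitution; you spell out the case analysis it leaves implicit, and your displayed identity for the \(\can\)-data \(t^{\leftinv},t^{\leftunit},\dots\) should be read as naturality in the pair \((\delta,\{e_x\})\) rather than in \(\delta\) alone, which your prose already accounts for.
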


\begin{lemma}
  Suspension preserves typing in \cattinv, in that
  the following rules are admissible:
  \begin{mathpar}
    \inferrule{\ctxty{\Gamma}}{\typing{\Susp\Gamma}} \and
    \inferrule{\typing{\Gamma}{A}}{\typing{\Susp\Gamma}{\Susp A}} \and
    \inferrule{\typing{\Gamma}{t}[A]}{\typing{\Susp\Gamma}{\Susp t}[\Susp A]}
    \and \inferrule{\typing{\Gamma}{\gamma}[\Delta]}{\typing{\Susp\Gamma}{\Susp
        \gamma}[\Susp\Delta]}
  \end{mathpar}
\end{lemma}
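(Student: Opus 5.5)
We prove admissibility of the four rules simultaneously, by mutual induction on the derivations of the judgements \(\ctxty{\Gamma}\), \(\typing{\Gamma}{A}\), \(\typing{\Gamma}{t}[A]\) and \(\typing{\Gamma}{\gamma}[\Delta]\) in \cattinv. The statement also has to cover judgemental equality, because the conversion rule and the \(\beta/\eta\)-rules are part of the theory. So we strengthen the induction hypothesis: if \(A\equiv B\) (resp.\ \(t\equiv u\)) in \(\Gamma\), then \(\Susp A\equiv\Susp B\) (resp.\ \(\Susp t\equiv\Susp u\)) in \(\Susp\Gamma\). Before the induction we record the purely syntactic commutation laws, each proved by structural induction on raw syntax:
\[
  \Susp(A[\gamma]) = (\Susp A)[\Susp\gamma],\qquad \Susp(t[\gamma]) = (\Susp t)[\Susp\gamma],\qquad \Susp(\gamma\circ\delta) = \Susp\gamma\circ\Susp\delta,
\]
together with \(\Susp(\id_\Gamma)=\id_{\Susp\Gamma}\) and \(\Susp(\comp{f}{g})=\comp{\Susp f}{\Susp g}\), \(\Susp(\id x) = \id(\Susp x)\). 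The last two hold because these operations are themselves defined as suspensions of coherences over pasting diagrams. For the \(\rec\) and \(\can\) constructors, these laws follow directly from the defining clauses of substitution and suspension.

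The inductive cases for the rules inherited from \catt go through exactly as in \catt, since suspension preserves pasting diagrams and full types. For the \(\Inv\) type and its six destructors, we apply the induction hypothesis and then rewrite the expected type with the laws above. For instance, the type of \(\leftunit(\Susp e)\) is \(\arr{\comp{\leftinv(\Susp e)}{\Susp t}}{\id(\Susp v)}\), which equals \(\Susp\) of the original type. The \(\coind\) case is analogous.

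For \(\can\), the premises are suspended termwise. We use \(\Var_{\dim A+2}(\Susp\Gamma)=\{\Susp x : x\in\Var_{\dim A+1}(\Gamma)\}\), which holds because suspension raises the dimension of every variable by one and only adds \(v^\pm\) in dimension \(0\). We also need the computation rules for \(\can\) to be stable. This requires \(\Susp(t^{\leftinv}) = (\Susp t)^{\leftinv}\) and the analogous identities for the other components of the canonical structure of \Cref{thm:invertibility-catt}, which we check from its explicit construction in \Cref{app:proof-invert}; that construction is built out of coherences and is compatible with suspension.

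The main obstacle is \(\rec\). Its premises live over the fixed contexts \(\Equiv{n+1}\) and \(\EquivInd{n+1}{t}\), while the suspended term \(\rec(\Susp t,\dots,\Susp\gamma)\) must be typed using \(\Equiv{n+2}\) and \(\EquivInd{n+2}{\Susp t}\). We first show \(\Susp\Equiv{n+1}=\Equiv{n+2}\) up to \(\alpha\)-renaming, which follows from \(\Susp\disk{n+1}=\disk{n+2}\) and \(\Susp\Inv(\dvar_{n+1})=\Inv(\dvar_{n+2})\). We then prove \(\Susp\classifierequiv{r}=\classifierequiv{\Susp r}\) by induction on the definition of \(\chi\). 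From this, \(\Susp\EquivInd{n+1}{t}=\EquivInd{n+2}{\Susp t}\) and \(\Susp\instantiation{r}=\instantiation{\Susp r}\) follow. These identities make the suspended premises exactly the premises of \(\rec\)-intro at dimension \(n+2\). The suspended \(\beta\)-rules for \(\rec\) then match the \(\beta\)-rules at the suspended instance, using \(\Susp(t_{ilu}[\instantiation{e}\circ\gamma]) = \Susp t_{ilu}[\instantiation{\Susp e}\circ\Susp\gamma]\). Finally, the structural rules for contexts and substitutions are immediate, using that \(v^\pm\) are fresh.
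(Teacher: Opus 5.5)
Your proposal is correct and takes the same approach as the paper, whose proof is a one-line induction on the syntax using that all rules of \cattinv, computation rules included, are stable under suspension. You make explicit what the paper leaves implicit, notably \(\Susp\Equiv{n+1}=\Equiv{n+2}\), \(\Susp\EquivInd{n+1}{t}=\EquivInd{n+2}{\Susp t}\) and \(\Susp\instantiation{r}=\instantiation{\Susp r}\) for the \(\rec\) case; like the paper, you assert rather than verify that the canonical structure of \Cref{thm:invertibility-catt} is compatible with suspension.
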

\begin{proof}
  Both are proven by induction on the syntax, using that all the given
  rules, including the computation rules are stable under substitution and
  suspension.
\end{proof}

\begin{lemma}\label{lemma:famillial-representability}
  There exists a natural bijection between types \({\typing{\Gamma}{A}}\) of
  \cattinv and substitutions
  \(\typing{\Gamma}{\classifierty{A}}[\sphere n]\) or
  \(\typing{\Gamma}{\classifierty{A}}[\disk {n+1}]\) up to judgemental equality.
  There exists a natural bijection between terms \(\typing{\Gamma}{t}[A]\) and
  substitutions \(\typing{\Gamma}{\classifiertm{t}}[\disk n]\) or
  \(\typing{\Gamma}{\classifierequiv{t}}[\Equiv{n+1}]\). Under these bijections,
  composition with \(\sphereInc_n\) or \(\equivDisk_{n+1}\) respectively
  sends a term to its type.
\end{lemma}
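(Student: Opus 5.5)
We would prove both bijections by induction on the dimension \(n\), following the proof of \Cref{lemma:sub-to-disk} and replacing the structural facts about disks and spheres with their analogues for \(\Equiv{n+1}\). The maps in one direction are the classifying substitutions \(\classifierty{A}\) and \(\classifiertm{t,A}\) defined above. The maps in the other direction send a substitution \(\typing{\Gamma}{\sigma}[\sphere n]\), \(\typing{\Gamma}{\sigma}[\disk{n+1}]\), \(\typing{\Gamma}{\sigma}[\disk n]\) or \(\typing{\Gamma}{\sigma}[\Equiv{n+1}]\) to the corresponding generic object pulled back along \(\sigma\):
\[
\begin{gathered}
\text{types } \arr{\dvar^-_n}{\dvar^+_n}[\sigma] \text{ and } \Inv(\dvar_{n+1})[\sigma],\\
\text{terms } \dvar_n[\sigma] \text{ and } \evar_{n+1}[\sigma].
\end{gathered}
\]
The first step is to check that these assignments are well typed. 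For \(\classifierty{A}\) we proceed by induction on the derivation of \(\typing{\Gamma}{A}\). For \(\Inv_{\arr{u}{v}}(t)\) we have \(\typing{\Gamma}{t}[\arr{u}{v}]\), so by induction \(\classifierty{\arr{u}{v}}\) is a substitution into \(\sphere n\). Extending it by \(\dvar_{n+1}\mapsto t\) is then valid, since \(\disk{n+1}=\sphere n\extctx(\dvar_{n+1}\ty\arr{\dvar^-_n}{\dvar^+_n})\) up to \(\alpha\)-renaming. For terms we use the extension rule for substitutions directly. In the \(\Inv\) case, \(\classifierequiv{t}=\classifierty{\Inv(u)}\extsub(\evar_{n+1}\mapsto t)\) is valid because \(\Inv(\dvar_{n+1})[\classifierty{\Inv(u)}]=\Inv(u)\).

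The second step is to show that the two directions are mutually inverse up to judgemental equality. The key identities are
\[
\begin{gathered}
\Inv(\dvar_{n+1})[\classifierty{A}] = A,\\
\evar_{n+1}[\classifierequiv{t}] = t,
\end{gathered}
\]
together with the analogous identities for arrow types and disks. These follow from the definition of the action of substitution, since substitution computes structurally under \(\Inv\) and under the arrow constructor. For the converse, we use that a substitution into a context \(\Delta\extctx(x\ty B)\) is exactly a pair \(\sigma'\extsub(x\mapsto s)\), and that substitutions into \(\sphere n\) are handled by the induction hypothesis. We must also check that judgemental equality is respected in both directions. This holds because the new conversion rules (the \(\beta\)/\(\eta\) rules for \(\coind\), \(\can\) and \(\rec\)) act only on terms, and equality of substitutions is defined componentwise. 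Naturality, meaning \(\classifierty{A[\gamma]}=\classifierty{A}\circ\gamma\) and similarly for terms, is immediate by induction from the defining equations of substitution.

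The third step is the statement about the type. Composing \(\classifiertm{t}\) with \(\sphereInc_n\) discards the last component \(\dvar_n\mapsto t\) and so gives \(\classifierty{A}\). Composing \(\classifierequiv{t}\) with \(\equivDisk_{n+1}\) gives \(\classifierty{\Inv(u)}\). Both are direct from the definitions once we note that \(\sphereInc_n\) and \(\equivDisk_{n+1}\) are the weakenings.

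The main obstacle is the dichotomy between arrow types and \(\Inv\) types, which requires an inversion lemma for type derivations. If \(\typing{\Gamma}{A}\) holds, then \(A\) is either \(\obj\), or \(\arr[B]{u}{v}\) with \(u,v\) of type \(B\), or \(\Inv_{\arr{u}{v}}(t)\) with \(\typing{\Gamma}{t}[\arr{u}{v}]\). Moreover, a term \(t\) of an \(\Inv\) type is never simultaneously of an arrow type. We would establish this by observing that the only type formation rules are \(\obj\)-intro, the two \(\to\)-intro rules and \(\Inv\)-intro. Type conversion only relates types whose head constructor agrees, because the conversion rules never equate \(\Inv\) with an arrow. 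This requires checking that judgemental equality of types is generated congruently from equality of terms. We would argue this by an induction over the equality derivations listed in \Cref{app:cattinv}.
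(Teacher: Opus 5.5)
Your proposal is correct and takes the same route as the paper. The paper only says the argument is analogous to \Cref{lemma:sub-to-disk}, using that every rule of \cattinv is stable under substitution, and that spheres classify \(\obj\)/arrow types while \(\disk{n+1}\) classifies \(\Inv\) types; you spell this out in more detail, making explicit the head-constructor inversion the paper leaves implicit and relying on the admissible substitution rules for naturality.
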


\begin{proof}
  The proof of the lemma is analogous to that of \Cref{lemma:sub-to-disk}
  requiring only that every rule of \cattinv is stable under substitution. We
  note that substitutions of the form
  \(\typing{\Gamma}{\classifierty{A}}[\sphere n]\) correspond to the case where
  \(A\) is of the form \(\obj\) or \(\arr{u}{v}\), while those of the form
  \(\typing{\Gamma}{\classifierty{A}}[\disk {n+1}]\) correspond to the case
  \(\Inv(t)\).
\end{proof}

Contrary to \catt, the theory \cattinv has non-trivial computation rules. In
order for the rewriting system of \cattinv to be locally confluent, we orient in
the direction of \(\beta\)\-reduction and \(\eta\)\-expansion. This is forced by
the following example: We consider a
term \(t\):
\[
  \typing{\Delta}{\coh_{\Gamma,A}[\gamma]}[A[\gamma]]
\]
and we assume that it is of dimension strictly larger than the context
\(\Gamma\), so we have the following invertibility structure on it:
\[
  \typing{\Delta}{\can(t,\{\})}[\Inv(t)]
\]
Denoting \(e\) this invertibility structure, we note that we have the
following conversions starting from this term:
\[
  \begin{tikzcd}
    e \ar[d,equal,"\eta"] \\
    \coind(t,\leftinv(e),\rightinv(e),\leftunit(e),\rightunit(e),
    \leftunitwitness(e),\rightunitwitness(e)) \ar[d,equal,"\beta"]
    \\
    \coind(t,t^{\leftinv},t^{\rightinv},t^{\leftunit},t^{\rightunit},
    \can(t^{\leftunit},\{\}), \can(t^{\rightunit},\{\}))
  \end{tikzcd}
\]
where the components of \(\coind\) are those described in
\Cref{subsection:canonical-inv}. Orienting \(\eta\) in the opposite direction
would make this a counterexample to local confluence.

\begin{proposition}\label{prop:local-confluence}
  The \(\beta\)-reduction \(\eta\)-expansion rewriting system is locally
  confluent, but non-terminating
\end{proposition}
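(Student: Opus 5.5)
The statement has two halves, and I would treat them separately: local confluence by a critical-pair analysis, and non-termination by exhibiting one explicit infinite reduction sequence.

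For local confluence, I orient every \(\beta\)-rule of \Cref{fig:beta-eta-coind} and \Cref{fig:beta-rec} from left to right. I also orient the \(\can\) computation rules of \Cref{subsection:canonical-inv} this way. The \(\eta\)-rule is oriented as an expansion \(e \to \coind(t,\leftinv(e),\dots,\rightunitwitness(e))\). As usual for \(\eta\)-expansion, this rule is restricted to terms \(e\) of type \(\Inv(t)\) that are not already of the form \(\coind(\dots)\) and do not occur as the argument of a destructor. This restriction is type-directed, and it is the standard way to avoid the trivial loops that \(\eta\)-expansion otherwise creates. Substitution and suspension commute with all rules, so every rule is closed under substitution.

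I then enumerate the critical pairs, that is, the overlaps between left-hand sides. A \(\beta\)-redex is a destructor applied to \(\coind\), \(\can\) or \(\rec\). The constructors \(\coind\), \(\can\) and \(\rec\) are pairwise distinct head symbols. Hence two \(\beta\)-rules overlap only by nesting, never at the root, and nested overlaps join by the usual parallel-reduction argument.

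The genuine overlaps are between \(\eta\)-expansion and \(\beta\). There are two kinds.
\begin{itemize}
\item \(\eta\)-expanding a subterm \(\can(t,\set{e_x})\) or \(\rec(\dots)\). The expanded \(\coind\) has components \(\leftinv(e)\), \dots, each of which \(\beta\)-reduces. This is the situation of the example preceding the statement: both sides reduce to \(\coind(t,t^{\leftinv},\dots,\can(t^{\leftunit},\{\}),\dots)\), or to its \(\rec\) analogue built from \(t_{ilu}[\instantiation{e}\circ\gamma]\).
\item \(\eta\)-expanding the argument of a destructor. This is excluded by the side condition. If one prefers not to impose that condition, the expansion followed by \(\beta\) returns the original term, so the pair still joins.
\end{itemize}
The remaining check is that \(\beta\)-rules for \(\rec\) interact correctly with substitution. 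This reduces to the definition \(\rec(\dots,\gamma)[\sigma]=\rec(\dots,\gamma\circ\sigma)\) together with associativity of composition of substitutions. Since each critical pair is joinable, the critical pair lemma yields local confluence.

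For non-termination, I use the recursor. Take \(e=\rec(t,t_l,t_r,t_{lu},t_{ru},t_{ilu},t_{iru},\gamma)\) with \(t_{ilu}=h^-\). Then \(\leftunitwitness(e)\) reduces to \(h^-[\instantiation{e}\circ\gamma]\), which equals \(\Susp(e)[\classifierequiv{\leftunitwitness(\evar_{n+1})}\circ\gamma]\). This is again a \(\rec\)-term, now with suspended data, and it can be unfolded again by applying \(\leftunitwitness\), and so on.

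The difficulty is that this regeneration happens through a destructor that must be supplied from outside. To get a self-sustaining loop, I instead combine it with \(\eta\)-expansion: \(e\) \(\eta\)-expands to a \(\coind\) whose sixth component \(\leftunitwitness(e)\) \(\beta\)-reduces to a fresh \(\rec\)-term \(e'\). That \(e'\) is again \(\eta\)-expandable, and iterating gives an infinite chain \(e\to\cdots\to\coind(\dots,\coind(\dots,e'',\dots),\dots)\to\cdots\). The same chain already arises with \(\can\) and the example above, since \(\can(t^{\leftunit},\{\})\) can itself be \(\eta\)-expanded indefinitely. This \(\can\) version is the simplest witness, and I would present it as the main example.

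The main obstacle is making the \(\eta\)-restriction precise. It must be strong enough that critical pairs between \(\eta\) and \(\beta\) remain joinable, yet it must still allow the infinite unfolding inside the components of \(\coind\).
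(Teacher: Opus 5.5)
Your proposal follows the paper's proof. Non-termination comes from repeatedly $\eta$-expanding the components $\can(t^{\leftunit},\{\})$ produced by the example preceding the statement. Local confluence is by inspecting critical pairs, and the paper's single worked pair is exactly your second case: $\leftinv(\can(t,\{e_x\}))$, joined via $\leftinv(\coind(t,\leftinv(e),\ldots))\to\leftinv(e)\to t^{\leftinv}$.

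The one difference is that the paper states the result for unrestricted $\eta$-expansion; the guard against expanding under destructors only appears later, in the normalisation proof. Your side conditions are therefore unnecessary here. Also, your first ``overlap'' is not really a fork, since a bare $\can$ or $\rec$ term is not itself a $\beta$-redex.
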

\begin{proof}
  First we show that this system is non-terminating by considering the same
  example as above. The last two components of the \(\coind\) term above are
  themselves of the same form, and thus may be expanded once again. This process
  does not terminate.

  Local confluence is shown by considering all critical pairs. We give one
  example of such a critical pair here, considering an invertibility structure
  \(e = \can(t,\{e_{x}\})\) and omitting the parts of the terms that are
  irrelevant to the argument.
  \[
    \begin{tikzcd}
      & \leftinv(e) \ar[dl, "\beta"'] \ar[dr,"\eta"] & \\
      t^{\leftinv}\ar[dr,equal, dashed]
      & &
          \leftinv(\coind(t,\leftinv(e),\ldots) \ar[d, "\beta", dashed]\\
      & t^{\leftinv} & \leftinv(e)\ar[l,"\beta", dashed]
    \end{tikzcd}
  \]
  The other critical pairs are similar to this one.
\end{proof}

Non-termination is not surprising. In fact, it is the \emph{raison d'être} of
the constructors \(\rec\) and \(\can\) to syntactically capture infinite
applications of the \(\coind\) constructor. We argue that this is not an issue
for our theory, since no rule of \cattinv requires checking equality of terms of
type \(\Inv\), and the theory is normalising on other terms. We call types of
the form \(\obj\) or \(\arr{u}{v}\) \emph{categorical types} and their
inhabitants \emph{categorical terms}.
\begin{proposition}\label{prop:normalising}
  The rewrite system defined by \(\beta\)-reduction and \(\eta\)-expansion is
  normalising on categorical types and terms, in the sense that there is a
  procedure \(\nf\) to choose a distinguished representative out of each
  \(\beta\eta\)\-equivalence class.
\end{proposition}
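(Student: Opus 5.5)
The plan is to define \(\nf\) by mutual recursion on the raw syntax of categorical types and terms, and to treat the subterms of type \(\Inv\) that occur inside categorical terms only through their destructors. The key observation is that a categorical term of \cattinv can only mention an invertibility structure \(e\) through one of the four destructors \(\leftinv\), \(\rightinv\), \(\leftunit\), \(\rightunit\), possibly after a chain of \(\leftunitwitness\) and \(\rightunitwitness\). So every categorical term is a variable, a coherence \(\coh_{\Gamma,A}[\gamma]\) whose substitution may contain \(\Inv\)-typed components, or a destructor applied to a ``spine''
\[
  d(w_1(\cdots w_k(e)\cdots)), \qquad d\in\{\leftinv,\rightinv,\leftunit,\rightunit\},\ w_i\in\{\leftunitwitness,\rightunitwitness\}.
\]
The proof proceeds in three steps.

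First, I will define a head-reduction procedure for spines. It uses only \(\beta\)-rules, never \(\eta\)-expansion. If \(e\) is a variable, the spine is neutral and stays as it is. If \(e\) is \(\coind(\dots)\), one \(\beta\)-step consumes the outermost witness or destructor. If \(e\) is \(\can(t,\{e_x\})\), the rules of \Cref{subsection:canonical-inv} send \(w_i\) to \(\can(t^{\leftunit},\{-\})\) or \(\can(t^{\rightunit},\{-\})\) and a final destructor to the categorical term \(t^{\leftinv}\), \(t^{\rightinv}\), and so on. If \(e\) is \(\rec(\dots,\gamma)\), the \(\beta\)-rules unfold it to \(t_{ilu}[\instantiation{e}\circ\gamma]\) or similar, or to \(t_l[\gamma]\) and similar. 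Each step strictly decreases the length \(k\) of the spine, so the procedure terminates and produces either a neutral spine on a variable or a categorical term. In the \(\rec\) case the unfolding reintroduces recursive calls \(\Susp(e)[\dots]\) through \(\instantiation{e}\), but these sit under \(h^\pm\) in the body. The spine then continues into them with a strictly shorter remaining spine, so the measure \(k\) still decreases.

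Second, I will define \(\nf\) on categorical types and terms by structural recursion. It is the identity on \(\obj\) and on variables. It is applied componentwise on \(\arr{u}{v}\) and on the categorical components of \(\coh_{\Gamma,A}[\gamma]\). On a destructor spine it first head-reduces as above and then recurses on the resulting categorical term, or on the categorical arguments of a neutral spine. For the components of \(\gamma\) of type \(\Inv\), and for any \(\Inv\)-typed subterm that stays in a neutral position, I will not normalise. Instead I choose the representative given by a fixed canonical structural form: such terms are never compared in the normal form of a categorical term, because they can only survive inside a neutral spine rooted at a variable. It remains to check that \(\nf(t)\) is convertible to \(t\), since every step is a \(\beta\)-step, and that \(\beta\eta\)-convertible categorical terms receive the same \(\nf\).

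Third, and this is where I expect the main obstacle, I must prove the invariance of \(\nf\) under \(\eta\). The \(\eta\)-rule can replace an \(\Inv\)-typed subterm \(e\) by \(\coind(t,\leftinv(e),\dots)\). Inside a spine this is harmless, because a subsequent \(\beta\)-step immediately undoes it, exactly as in the critical pair of \Cref{prop:local-confluence}. However, \(\eta\) may also act on an \(\Inv\)-typed argument of a coherence substitution, or in the \(\can\) family \(\{e_x\}\), where it is not consumed. To handle this, I will show that \(\nf\) never inspects those arguments except through destructors, and that the computation rules for \(\can\) do not depend on the \(e_x\). Hence \(\nf\) factors through a quotient in which all \(\eta\)-variants of an \(\Inv\)-term are identified. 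Concretely, I will define an \(\eta\)-long-insensitive relation on \(\Inv\)-terms, show it is a congruence stable under substitution and suspension, and show that the spine reduction commutes with it. Combined with local confluence (\Cref{prop:local-confluence}) restricted to the terminating \(\beta\)-fragment on spines, Newman's lemma then gives uniqueness of the categorical normal form. This in turn yields the distinguished representative of each \(\beta\eta\)-class.
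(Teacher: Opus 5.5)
You take a genuinely different route from the paper, and the termination step fails as written. The paper keeps \(\eta\)-expansion in the rewrite system, restricted to invertibility structures of dimension at most \(n\) and never under a destructor, and argues that in an \(n\)-dimensional categorical term nothing above dimension \(n\) can matter. You instead normalise with \(\beta\) alone and try to prove \(\eta\)-invariance afterwards. Your termination argument lacks that dimension bound, and the measure you use in its place does not work: it is false that each step strictly decreases the spine length \(k\). In the \(\coind\) case the contractum \(t_{ilu}\) is an arbitrary \(\Inv\)-term, itself a spine \(w'_1(\cdots w'_j(e'))\), so the length becomes \(k-1+j\). For instance, if \(e\) is the \(\eta\)-expansion of \(\leftunitwitness(y)\), then \(\leftinv(\leftunitwitness(e))\) steps to \(\leftinv(\leftunitwitness(\leftunitwitness(y)))\), and reduction continues if a \(\coind\) or \(\rec\) stands in place of the variable \(y\). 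The same happens for \(\rec\) through \(\evar_{n+1}[\gamma]\). Take the identity-like \(\rec\) with \(t=\dvar_{n+1}\), \(t_l=\leftinv(\evar_{n+1})\), \(t_{lu}=\leftunit(\evar_{n+1})\) and \(t_{ilu}=\leftunitwitness(\evar_{n+1})\). There the contractum \(t_{ilu}[\instantiation{e}\circ\gamma]\) is not a subterm and in general contains suspended recursive calls, so no size measure decreases either. For the same reason, recursing on \(t_l[\gamma]\) after unfolding a \(\rec\) is not structural recursion. What actually bounds the unfolding is dimension. Witness destructors raise dimension by one, and the recursive calls supplied by \(\instantiation{e}\) live one dimension higher, so an \(m\)-dimensional categorical term can only ever consult structures of dimension at most \(m\). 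Termination therefore needs a downward induction on dimension, for example a reducibility predicate on \(\Inv\)-terms indexed by \(m\) minus their dimension. This is the role the dimension guard plays in the paper.

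The uniqueness step also fails as stated, because there is no terminating \(\beta\)-fragment to which Newman's lemma could be applied. In the same identity-like \(\rec\), take instead \(t_{ilu}\) to be the \(\coind\)-expansion of \(h^-\). This is well typed, since \(h^-\) has type \(\Inv(\leftunit(\evar_{n+1}))\). Writing \(R\) for the resulting \(\rec(\dots,\gamma)\), the term \(\leftunitwitness(R)\) reduces to a \(\coind\) one of whose components is \(\leftunitwitness(R')\), with \(R'\) a suspended copy of \(R\). That component is again a redex of the same shape, so already the categorical term \(\leftinv(\leftunitwitness(R))\) has an infinite \(\beta\)-sequence. Since judgemental equality is a congruence, you must show \(\nf\) is invariant under \(\beta\)- and \(\eta\)-steps at arbitrary positions, including inside components that head reduction later discards. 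That requires confluence of \(\beta\) by a method not relying on termination (the \(\beta\)-rules do not overlap, so parallel reduction is the natural tool), together with an explicit \(\eta\)-commutation lemma. Your unspecified ``\(\eta\)-insensitive relation'' does not yet supply either. Two smaller corrections. Coherence substitutions never have \(\Inv\)-typed components, since pasting contexts contain only categorical variables. And no canonical choice for neutral \(\Inv\)-subterms is needed: a \(\beta\)-normal categorical term contains only \(\Inv\)-subterms \(w_1(\cdots w_k(x))\) with \(x\) a variable, as the paper notes right after the proposition.
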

\begin{proof}
  We prove this by defining a normalisation strategy. To this end, we introduce
  an intermediate rewrite system, where we guard \(\eta\)-expansion by the
  dimension. We call \(n\)-dimensional \(\eta\)-expansion the rewrite system
  where we only \(\eta\)-expand invertibility structures of dimension at most
  \(n\). We also restrict \(\eta\)-expansion to not apply under any destructor,
  as done by~\citet{jayVirtues1995}. The restricted system is terminating, and
  is still locally confluent by the same proof as \Cref{prop:local-confluence}.

  Two \(n\)\-dimensional categorical terms are \(\beta\eta\)\-equivalent exactly
  when they have the same \(n\)-dimensional normal form. Indeed, no
  invertibility structure of dimension above \(n\) may appear in a
  \(\beta\)-reduced form, thus \(\eta\)-expansion of invertibility structures of
  dimension more than \(n\) is irrelevant. Thus, our strategy consists in
  computing the normal form for the \(n\)-dimensional restricted system of
  \(n\)\-dimensional categorical terms.
\end{proof}

For the rest of this article, it is useful to notice that categorical types and
terms in normal form may not use the constructors \(\can\), \(\rec\) or
\(\coind\), as those are eliminated by the normalisation procedure. Thus, the
invertibility structures that may appear in such a term are variables, or
iterated applications of the destructors \(\leftunitwitness\) and
\(\rightunitwitness\) on those.

\section{Internal proofs using invertibility}
\label{sec:examples}

We have developed a prototype implementation of the type theory \cattinv in the
form of a proof assistant allowing to reason about \(\omega\)-categories and
their invertibility structures. In this section, we formalise several recent
results of the literature in our proof-assistant. The main novelty does not lie
in the results that we prove, but in the formal approach within the theory
\cattinv, eliminating the need for intricate meta-theoretic reasoning.

\subsection{Implementation and additional features}

We have written our implementation on top of an existing implementation of
\catt, allowing us to leverage and extend several features already present in
said implementation. In particular, we have implemented a unification algorithm
for \cattinv, allowing to resolve implicit arguments. The unification follows a
best-effort principle and is incomplete, but it is enough to significantly
reduce the size of the terms one needs to write in practice. Additionally one
may always write full terms manually if unification fails. In practice, the
implementation infers automatically which arguments can be made explicit, and
one may also use the wildcard \verb|_| for an argument which, despite explicit
in general, can be inferred in a particular instance. The existing
implementation of \catt has two keywords for declaring names
\begin{verbatim}
coh [name] [ctx] : [ty]
let [name] [ctx] : [ty] = [tm]
\end{verbatim}
The keyword \verb|coh| declares ``coherences'', which are a pair of a pasting
diagram and a full type in it. In our concrete syntax, these are not terms, as
they require a substitution to become one, rather they are schemas of
definitions corresponding to a particular form of cut. The keyword \verb|let|
declares arbitrary terms, obtained as variables, or coherence or terms applied
to a substitution. In our implementation of \cattinv, we add the type
constructor \verb|Inv|, together with two new keywords
\begin{verbatim}
inv [name] [ctx]={[tm],[tm],[tm],[tm],[tm],[tm],[tm]}
rec [name] [ctx]={[tm],[tm],[tm],[tm],[tm],[tm],[tm]}
\end{verbatim}
The keyword \verb|inv| declares terms built with the \(\coind\) constructor,
while the keyword \verb|rec| declares recursive definitions: It is required in
the latter that the context is of the form \(\Equiv{n+1}\), and we provide the
keywords \verb|IHleft| and \verb|IHright| to access the inductive hypotheses
within the last two arguments of recursive definitions. The keyword \verb|rec|
is similar to \verb|coh| as it does not correspond directly to a term
constructed with the constructor of the same name, but rather to an application
of cut which needs a substitution to produce a term. Finally, we also provide
term constructor \verb|can|, which takes as argument a term and a list of
invertibility structures, written as comma-separated terms delineated by curly
braces.

The suspension operation of \catt and \cattinv is left implicit, which means
that every definition we write is thought of as a family of constructions
generated by suspension. We also use the built-in keywords \verb|id| which
denotes the identity coherence, and \verb|comp| which computes the compositions
along codimension \(1\) faces. The keyword \verb|comp| is particular in the
sense that it is multi-ary: it can be used with any number of arguments and will
generate the unbiased composite of said arguments.

\subsection{Formal results about invertibility structures}
The aim of this section is to present a few results that we have formally proven
in \cattinv. All of these results are given in the file
\verb|proofs/invertibility.catt| of the supplementary
material\footnote{available at \url{https://zenodo.org/records/18343317}}, which
we also reproduce in \Cref{app:proofs-artefact} for completeness. To help with
the presentation, we give statements in proposition environments, and present
selected parts of the construction as the proof. The first result we prove is a
special case of \Cref{thm:invertibility-catt} to demonstrate how our theory may
be used.

\begin{proposition}
  The composite of two invertible cells is invertible.
\end{proposition}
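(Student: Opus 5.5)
The plan is to work entirely inside \cattinv, in the context
\[
  \Delta = (x\ty\obj)\extctx(y\ty\obj)\extctx(f\ty\arr{x}{y})\extctx(z\ty\obj)\extctx(g\ty\arr{y}{z})\extctx(e_f\ty\Inv(f))\extctx(e_g\ty\Inv(g)),
\]
and to produce a term of type \(\Inv(\comp{f}{g})\). The result is then valid for cells of every dimension, since definitions are implicitly suspended. There is an immediate answer: \(\comp{f}{g}\) is the coherence \(\coh_{\Gamma,\arr{x}{z}}[\gamma]\) of \Cref{ex:id-comp}, whose \((\dim+1)\)-dimensional variables are exactly \(f\) and \(g\). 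So \(\can(\comp{f}{g},\{e_f,e_g\})\) typechecks by \nameref{rule:can-intro}. Since the point of the section is to illustrate the theory, I would also give an explicit construction via \nameref{rule:coind-intro}, which makes the structure of \Cref{fig:left-unit} visible.

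For the explicit construction I would supply the seven components in order.
\begin{itemize}
  \item The inverses are \(t_l = \comp{\leftinv(e_g)}{\leftinv(e_f)}\) and \(t_r = \comp{\rightinv(e_g)}{\rightinv(e_f)}\).
  \item The left cancellator \(t_{lu}\colon \comp{(\comp{\leftinv(e_g)}{\leftinv(e_f)})}{(\comp{f}{g})}\to\id z\) is a vertical composite of three cells. The first is an associator rebracketing to \(\leftinv(e_g)\ast(\leftinv(e_f)\ast f)\ast g\). The second whiskers \(\leftunit(e_f)\) by \(\leftinv(e_g)\) and \(g\), and composes with a unitor for \(\id y\). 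The third is \(\leftunit(e_g)\).
  \item The right cancellator \(t_{ru}\) is built symmetrically from \(\rightunit(e_f)\) and \(\rightunit(e_g)\).
\end{itemize}
Each of these is a single coherence applied to a substitution, or a composite of such, so writing them down is routine.

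The main remaining step is to provide \(t_{ilu}\ty\Inv(t_{lu})\) and \(t_{iru}\ty\Inv(t_{ru})\). Here I would again use \(\can\), writing \(t_{lu}\) as one coherence \(\coh_{\Gamma',A'}[\gamma']\). In this coherence the pasting diagram \(\Gamma'\) has as top-dimensional variables the two cancellator cells, and everything else (associator, unitor, whiskering) is absorbed into the coherence. The required witnesses are then \(\leftunitwitness(e_f)\) and \(\leftunitwitness(e_g)\), and dually \(\rightunitwitness\) for \(t_{ru}\).

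I expect the main obstacle to be exactly this repackaging. We must check that \(t_{lu}\) is judgementally equal to, or can be chosen directly as, a single coherence over a pasting context whose variables of dimension \(\dim A'+1\) are precisely the two cancellators. Only then does \(\can\) apply, with no further invertibility obligations on associators or unitors. If this fails, I would fall back on \(\can\) applied separately to each factor, with an empty family for the pure coherences. I would then close up using the already-established composite of invertibility structures one dimension higher. That argument is a recursion, and it would be expressed with \nameref{rule:rec-intro} over \(\Equiv{n+1}\), using the inductive hypotheses \(h^\pm\) in the last two premises.
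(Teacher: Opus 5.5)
Your first paragraph is the paper's proof. There, \(\comp{f}{g}\) is a coherence whose \(1\)\-dimensional variables are exactly \(f\) and \(g\), so \(\can(\comp{f}{g},\{e_f,e_g\})\) is well typed by \ruleref{rule:can-intro}, and suspension gives every higher dimension. The explicit \(\coind\) construction is unnecessary, and its single-coherence step cannot work: the source \(\comp{\leftinv(e_g)}{g}\) of \(\leftunit(e_g)\) does not occur in the source of \(t_{lu}\), since it only appears after \(\leftunit(e_f)\) and a unitor are applied; the fix is simply \(\can\) on the vertical composite, with each factor's structure given by \(\can\) or \(\leftunitwitness\), and no \(\rec\) is needed.
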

\begin{proof}
  This is a straightforward application of \ruleref{rule:can-intro}. Note that
  we state the result in dimension \(1\), but suspension allows us to derive it
  in any dimension.
\begin{verbatim}
let compinv (x : *) (y : *) (z : *)
            (f : x -> y) (g : y -> z)
            (e : Inv (f)) (e' : Inv (g))
: Inv (comp f g) = can ( comp f g { e , e' })
\end{verbatim}
\vspace{-1.5\baselineskip}
\end{proof}

\begin{proposition} \label{prop:left-inv} The left and right inverses of any
  invertibility structure are invertible.
\end{proposition}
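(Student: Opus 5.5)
The plan is to work in the walking equivalence \(\Equiv{n+1}\), with \(f = \dvar_{n+1} : x \to y\) and \(e = \evar_{n+1}\). There we construct, via the \(\rec\) keyword, an invertibility structure on \(g = \leftinv(e)\colon y \to x\); the one for \(\rightinv(e)\) is dual. Any \(e \ty \Inv(t)\) in an arbitrary context is then handled by applying the recursive definition to the classifying substitution \(\classifierequiv{e}\) of \Cref{lemma:famillial-representability}, and suspension covers all dimensions.

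The \(\rec\) rule is what makes this work. Taking \(t = \leftinv(\evar_{n+1})\), the inductive hypotheses \(h^-\) and \(h^+\) of \(\EquivInd{n+1}{t}\) become, after unfolding the classifying substitutions, invertibility structures on \(\leftinv(\leftunit(e))\) and \(\leftinv(\rightunit(e))\). So the higher-dimensional instances of the very statement we are proving are available when filling the last two premises.

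The components are chosen as follows.
\begin{itemize}
\item For the right half, take \(t_r = f\) and \(t_{ru} = \leftunit(e) \colon \comp{\leftinv(e)}{f} \to \id y\), which is exactly the required type. Its invertibility \(t_{iru}\) is \(\leftunitwitness(e)\), so no recursion is needed on this side.
\item For the left half, take \(t_l = f\). We need \(t_{lu} \colon \comp{f}{\leftinv(e)} \to \id x\), built as the composite of the chain
\[
\comp{f}{f^{\leftinv}} \Rightarrow \comp{f}{(\comp{f^{\leftinv}}{(\comp{f}{f^{\rightinv}})})} \Rightarrow \comp{f}{(\comp{(\comp{f^{\leftinv}}{f})}{f^{\rightinv}})} \Rightarrow \comp{f}{(\comp{\id y}{f^{\rightinv}})} \Rightarrow \comp{f}{f^{\rightinv}} \Rightarrow \id x .
\]
The steps are, in order: a whiskering of \(\leftinv(\rightunit(e))\) together with a unitor; an associator; a whiskering of \(\leftunit(e)\); a unitor; and finally \(\rightunit(e)\). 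This is the usual argument that \(f\) is a left inverse of \(f^{\leftinv}\) because \(f^{\leftinv} \simeq f^{\rightinv}\).
\item For \(t_{ilu}\), apply \(\can\) to the multi-ary composite. Its invertible top-dimensional inputs are \(\leftinv(\rightunit(e))\), with witness \(h^+\) (the inductive hypothesis), \(\leftunit(e)\) with \(\leftunitwitness(e)\), and \(\rightunit(e)\) with \(\rightunitwitness(e)\). The pure coherences (associators and unitors) are covered by \(\can\) with no witnesses, and whiskerings are coherences applied to a single invertible cell, hence again covered by \(\can\).
\end{itemize}

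I expect the main obstacle to be the bookkeeping in the left half. One must check that \(h^+\) really has type \(\Inv(\leftinv(\rightunit(e)))\) after computing \(\Susp(t)[\classifierequiv{\rightunitwitness(\evar_{n+1})}]\). One must also present the chain as a single \(\coh\) applied to a substitution whose top-dimensional variables are sent precisely to these three cells, so that the \(\can\)-intro premises line up. If a single unbiased coherence is awkward, the fallback is to compose step by step, using \texttt{compinv} from the previous proposition together with \(\can\) for each whiskering. Only \(h^+\) is needed; \(h^-\) goes unused, and symmetrically only \(h^-\) is needed for \(\rightinv(e)\).
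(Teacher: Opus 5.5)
Your proposal is correct and is essentially the paper's construction of \texttt{linv-inv}. The paper also uses a \(\rec\) on \(\Equiv{n+1}\) with \(t_l = t_r = f\), \(t_{ru} = \leftunit(e)\) and \(t_{iru} = \leftunitwitness(e)\). Its \(t_{lu}\) is the whiskered cell \(\leftinv(e)\to\rightinv(e)\) (the paper's \texttt{lri}) followed by \(\rightunit(e)\), and \(t_{ilu}\) is built from nested \(\can\)s using only \(h^+\) (the paper's \texttt{IHright}). One notational fix: the cell you write as \(\leftinv(\rightunit(e))\) should be \(\leftinv(\rightunitwitness(e))\), since the destructors act on invertibility structures, not on cells.

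The only real divergence is the right inverse. The paper does not use a mirror-image \(\rec\) with \(h^-\). Instead it takes a plain \(\coind\) that reuses the left-inverse result through the invertible cell between \(\leftinv(e)\) and \(\rightinv(e)\).
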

\begin{proof}
  We only present the construction for the left inverse, the construction for
  the right inverse is symmetric and can be found in \Cref{app:proofs-artefact}
  or in the file \verb|proofs/invertibility.catt| of the supplementary material.
  We first define a cell relating the left and right inverse.
\begin{verbatim}
let lri (x : *) (y : *) (f : x -> y) (e : Inv(f))
: linv (e) -> rinv (e)
\end{verbatim}
  We refer to \Cref{app:proofs-artefact} for the formal definition, which we
  illustrate here in \Cref{fig:lri}.
  \begin{figure*}
    \centering
    \begin{mathpar}
      \begin{tikzcd}[column sep = tiny]
        \leftinv(e) \ar[r]
        & \leftinv(e) \ast \id \ar[r]
        & \leftinv(e) \ast (f \ast \rightinv(e)) \ar[r]
        & (\leftinv(e) \ast f) \ast \rightinv(e) \ar[r]
        &  \id\ast\rightinv(e) \ar[r]
        & \rightinv(e)
      \end{tikzcd}
    \end{mathpar}
    \caption{Relating left inverse to right inverse}
    \label{fig:lri}
  \end{figure*}
  We then construct the witness that provided that the left inverse of
  \(\leftunitwitness (e)\) is invertible, the above cell is also invertible:
\begin{verbatim}
let lriU-aux (x : *) (y : *) (f : x -> y)
             (e : Inv(f)) (e' : Inv (linv (irunit (e))))
: Inv (lri e)
\end{verbatim}
The invertibility structure on the left inverse is then obtained as a recursive
definition as follows:
\begin{verbatim}
rec linv-inv (x : *) (y : *) (f : x -> y) (e : Inv(f))
= { linv(e) , f , f , comp (whiskl f (lri e)) (runit (e)) ,
    lunit (e) ,
    can (_ {can (_ { lriU-aux IHright }) ,
    (irunit (e))}), ilunit (e) }
\end{verbatim}
\vspace{-1.5\baselineskip}
\end{proof}

\begin{proposition}
  If \(t\) is invertible, there is an invertible cell between its left and right
  inverse.
\end{proposition}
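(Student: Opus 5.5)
The cell to use is \texttt{lri e} \(\colon \leftinv(e)\to\rightinv(e)\) from the proof of \Cref{prop:left-inv}. The work is to equip it with an invertibility structure, that is, to build a term of type \(\Inv(\texttt{lri e})\) in the context \((x,y,f,e\ty\Inv(f))\). By \Cref{fig:lri}, \texttt{lri e} is a composite of five \(2\)-cells:
\begin{itemize}
  \item the right unitor \(\leftinv(e)\to\leftinv(e)\ast\id\);
  \item the whiskering of the inverse of \(\rightunit(e)\) by \(\leftinv(e)\);
  \item an associator;
  \item the whiskering of \(\leftunit(e)\) by \(\rightinv(e)\);
  \item the left unitor.
\end{itemize}
Writing it as an application of the multi-ary \texttt{comp} to these pieces, the natural tool is \ruleref{rule:can-intro}. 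For a \texttt{comp} term in a \(2\)-dimensional pasting diagram, the variables of dimension \(\dim A+1=2\) are exactly the five composed cells, so we must provide an invertibility structure for each factor.

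The factors are handled as follows.
\begin{itemize}
  \item \emph{Unitors and associator.} These are coherences \(\coh_{\Gamma,A}[\gamma]\) over pasting diagrams of dimension at most \(1\). Applying \ruleref{rule:can-intro} to each of them needs no premises, giving \(\can(-,\{\})\).
  \item \emph{Whiskering of \(\leftunit(e)\).} A whiskering is again a coherence whose only top-dimensional variable is the whiskered \(2\)-cell. So \ruleref{rule:can-intro} reduces this case to \(\Inv(\leftunit(e))\), which is supplied by \(\leftunitwitness(e)\).
  \item \emph{Whiskering of the inverse of \(\rightunit(e)\).} Here we need an invertibility structure on \(\leftinv(\rightunitwitness(e))\), the left inverse of the right cancellator. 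This is exactly an instance of \Cref{prop:left-inv}: apply the recursive definition \texttt{linv-inv} (suspended once, which is implicit) to the substitution sending \(f\mapsto\rightunit(e)\) and \(e\mapsto\rightunitwitness(e)\). This yields a term of type \(\Inv(\leftinv(\rightunitwitness(e)))\).
\end{itemize}

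The expected obstacle is purely bookkeeping: checking that the term \texttt{lri e} as actually defined in the artefact is syntactically a coherence applied to a substitution whose top-dimensional variables are sent to those five cells. If it is defined as nested binary composites rather than one unbiased composite, we apply \ruleref{rule:can-intro} layer by layer, once per composite node. Indeed, the auxiliary \texttt{lriU-aux} in \Cref{prop:left-inv} already does exactly this, assuming an invertibility structure on \(\leftinv(\rightunitwitness(e))\). So the whole result is obtained as \texttt{lriU-aux} applied to \texttt{linv-inv} of \(\rightunitwitness(e)\), with no recursion needed in this proposition.
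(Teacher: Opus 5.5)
Your proposal is correct and is essentially the paper's own proof: the artefact defines \texttt{lriU e} as \texttt{lriU-aux} applied to \texttt{linv-inv} instantiated at \(\rightunitwitness(e)\), and \texttt{lriU-aux} is exactly the factor-by-factor application of the \(\can\)-introduction rule to the five-fold composite \texttt{lri e} that you describe. The one non-trivial premise is an invertibility structure on \(\leftinv(\rightunitwitness(e))\), which you correctly identify and supply with the suspended recursive term of \Cref{prop:left-inv}, precisely as the paper does.
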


\begin{proposition}\label{prop:transport}
  Given an invertible cell \(f \ty u \to v\), if \(u\) is invertible, then so is
  \(v\).
\end{proposition}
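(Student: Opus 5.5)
The idea is to write \(v\) as a composite of \(u\) with invertible cells, and then read off invertibility from the \(\can\) rule. Let \(u,v \ty a \to b\) be \(n\)\-cells, \(f \ty u \to v\) an \((n+1)\)\-cell with \(e_f\ty\Inv(f)\), and \(e_u \ty \Inv(u)\). Everything is constructed internally in \cattinv, in the context
\[
\disk{n}\extctx (v\ty \arr{a}{b})\extctx(f\ty\arr{u}{v})\extctx(e_f\ty\Inv(f))\extctx(e_u\ty\Inv(u))
\]
and we work in the lowest dimension, with \(u,v\) \(1\)\-cells, relying on suspension for the general case.

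The \(n\)\-cell \(v\) is not itself a coherence applied to \(u\), so \(\can\) cannot be applied to \(v\) directly. We therefore do not compare \(v\) and \(u\) at all. Instead we build the invertibility structure on \(v\) with \(\coind\), reusing the inverses of \(u\) and transporting the cancellators along \(f\). Concretely, we take
\[
v^{\leftinv} = \leftinv(e_u), \qquad v^{\rightinv} = \rightinv(e_u),
\]
\[
v^{\leftunit} = \comp{(\leftinv(e_u) \ast \leftinv(e_f))}{\leftunit(e_u)} \ty \comp{\leftinv(e_u)}{v} \to \id b .
\]
Here \(\leftinv(e_u)\ast\leftinv(e_f)\) is the whiskering of the inverse \(\leftinv(e_f)\ty v\to u\), a coherence term applied to a substitution. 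The right cancellator \(v^{\rightunit}\) is defined symmetrically, using \(\rightunit(e_u)\) and the whiskering of \(\rightinv(e_f)\) on the other side.

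It remains to supply the two witnesses \(\Inv(v^{\leftunit})\) and \(\Inv(v^{\rightunit})\). By \ruleref{rule:can-intro}, \(v^{\leftunit}\) is a composite, i.e.\ a coherence applied to a substitution. Its top-dimensional variables are sent to the whiskering of \(\leftinv(e_f)\) and to \(\leftunit(e_u)\). For \(\leftunit(e_u)\) we have the witness \(\leftunitwitness(e_u)\). For the whiskering, \(\can\) applies again, since it is a whiskering coherence whose only top-dimensional variable is sent to \(\leftinv(e_f)\). So it suffices to have an invertibility structure on \(\leftinv(e_f)\), and \Cref{prop:left-inv} provides exactly this via \verb|linv-inv|, applied to \(e_f\) through the classifying substitution \(\classifierequiv{e_f}\) of \Cref{lemma:famillial-representability}. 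The symmetric case uses the right-inverse analogue. Nesting these \(\can\) calls yields the two witnesses, and \(\coind\) assembles the structure. No recursion is needed, since the recursion is hidden inside \Cref{prop:left-inv}.

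The main obstacle is the typing of \(v^{\leftunit}\). The composite has source \(\comp{\leftinv(e_u)}{u}\) after whiskering by \(\leftinv(e_f)\), but we need source \(\comp{\leftinv(e_u)}{v}\). So the whiskering must go from \(\comp{\leftinv(e_u)}{v}\) to \(\comp{\leftinv(e_u)}{u}\), which is why we whisker the inverse \(\leftinv(e_f)\) and not \(f\) itself. We must then check that the implicit arguments and the sources and targets unify, using the \(\beta\)-rules of \Cref{fig:beta-rec} to compute the type of the \(\leftinv\) component of \verb|linv-inv|. Should the direction cause trouble, the fallback is to whisker \(f\) into the target side of the cancellators of \(v^{\leftinv}\), which instead requires invertibility of \(e_f\) itself.
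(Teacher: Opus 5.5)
Your proposal is correct and is essentially the paper's construction. It is a \(\coind\) term whose inverses are \(\leftinv(e_u)\) and \(\rightinv(e_u)\), whose cancellators whisker an inverse of \(f\) and compose with the cancellators of \(e_u\), and whose witnesses are nested \(\can\) calls fed with the unit witnesses of \(e_u\) and, by \Cref{prop:left-inv}, an invertibility structure on the inverse of \(f\). The only difference is cosmetic: on the right you whisker \(\rightinv(e_f)\) and use \texttt{rinv-inv}, whereas the paper reuses \(\leftinv(e_f)\) and \texttt{linv-inv} on both sides.
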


\begin{proposition}\label{prop:2of6}
  Invertible cells satisfy the 2-of-6 property: given three composable cells
  \(f\), \(g\) and \(h\) such that \(f\ast g\) and \(g\ast h\) are invertible,
  then so are \(f\), \(g\) and \(h\).
\end{proposition}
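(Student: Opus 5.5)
The 2-of-6 statement will be derived from the formal results already in hand: \ruleref{rule:can-intro}, \Cref{prop:left-inv}, the previous proposition relating left and right inverses, and \Cref{prop:transport}. I will also use one auxiliary lemma, proved first in \cattinv.

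\textbf{Auxiliary lemma.} Suppose \(a\ty x\to y\) and \(b\ty y\to x\) are such that both composites \(\comp{a}{b}\) and \(\comp{b}{a}\) are invertible, i.e.\ connected to identities by invertible cells. Then \(a\) is invertible. Concretely, from \(e\ty\Inv(\comp{f}{g})\) and \(e'\ty\Inv(\comp{g}{h})\) I will build the invertibility data on \(g\) directly:
\begin{itemize}
  \item right inverse \(\comp{h}{\rightinv(e')}\), with right cancellator the associator \(\comp{g}{(\comp{h}{\rightinv(e')})}\to\comp{(\comp{g}{h})}{\rightinv(e')}\) followed by \(\rightunit(e')\);
  \item left inverse \(\comp{\leftinv(e)}{f}\), with left cancellator built symmetrically from \(\leftunit(e)\).
\end{itemize}
The invertibility witnesses for these cancellators are obtained with \(\can\). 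Each cancellator is a composite of a coherence (associator), which is invertible with no premises, and \(\rightunit(e')\) or \(\leftunit(e)\), whose invertibility is given by \(\rightunitwitness(e')\) and \(\leftunitwitness(e)\). So \(g\) is invertible by a single \(\coind\), with no recursion needed.

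\textbf{Invertibility of \(f\) and \(h\).} I will not construct these by hand; instead I will reduce them to invertibility of \(g\). Let \(\tilde g\) be an inverse of \(g\), say its left inverse, which is invertible by \Cref{prop:left-inv}. For \(h\), there is a cell
\[
\comp{\tilde g}{(\comp{g}{h})} \to \comp{(\comp{\tilde g}{g})}{h} \to \comp{\id}{h}\to h,
\]
assembled from an associator, the whiskered left cancellator of \(g\), and a unitor. By \ruleref{rule:can-intro} this cell is invertible, since all its components are invertible. Its source \(\comp{\tilde g}{(\comp{g}{h})}\) is invertible as a composite of invertible cells (the composition proposition, again via \(\can\)). \Cref{prop:transport} then transfers invertibility to the target \(h\). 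The case of \(f\) is symmetric, using a right inverse of \(g\) and \(e\).

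\textbf{Main obstacle.} The delicate step is the use of \(\can\) on whiskered cells such as \(\comp{(\comp{\tilde g}{g})}{h}\to\comp{\id}{h}\). The premise of \ruleref{rule:can-intro} only asks for invertibility of the top-dimensional variables of the pasting context. So I must express the whiskering as a coherence \(\coh_{\Gamma,A}[\gamma]\) whose only top-dimensional variable is mapped to \(\leftunit\) of \(g\)'s structure, and supply \(\leftunitwitness\) for it. If a single coherence is awkward, I will instead whisker explicitly and compose three separately invertible cells, each justified by \(\can\), and then compose them once more with \(\can\). Everything here stays in a fixed dimension and uses suspension implicitly, so no \(\rec\) definition is required beyond the one already used inside \Cref{prop:left-inv}.
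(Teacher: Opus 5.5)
Your proposal is correct. For \(g\) it matches the paper's construction (\texttt{2of6-g}) exactly:
\begin{itemize}
\item the inverses are \(\comp{\leftinv(e)}{f}\) and \(\comp{h}{\rightinv(e')}\);
\item the cancellators are an associator followed by \(\leftunit(e)\), resp.\ \(\rightunit(e')\);
\item the witnesses are given by \(\can\), combining a premise-free \(\can\) on the associator with \(\leftunitwitness(e)\), resp.\ \(\rightunitwitness(e')\).
\end{itemize}

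For \(f\) and \(h\) you take a genuinely different route. The paper builds both directly with \(\coind\). For \(h\) it uses inverses \(\comp{\leftinv(e')}{g}\) and \(\comp{\rightinv(e')}{g}\). The left cancellator is again an associator followed by \(\leftunit(e')\). The right cancellator \(\comp{h}{(\comp{\rightinv(e')}{g})}\to\id z\) is a longer chain: it inserts \(\comp{\tilde g}{g}\) in place of an identity using \(\rightinv\) of the \(\leftunitwitness\) of \(g\)'s structure, reassociates, cancels with \(\rightunit(e')\), and finishes with \(g\)'s left cancellator. Its invertibility witness therefore needs \Cref{prop:left-inv} one dimension up, and the case of \(f\) is dual.

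You instead use the classical conjugation argument. You exhibit \(h\) as the target of an invertible coherence-built cell out of \(\comp{\tilde g}{(\comp{g}{h})}\), whose own invertibility comes from \(\can\), \(e'\) and \Cref{prop:left-inv} in the base dimension, and you conclude with \Cref{prop:transport}.

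Your approach is shorter and more modular: only \(g\) is built by hand, and \(f\) and \(h\) follow from one generic step. The cost is a dependence on \Cref{prop:transport}, and cancellators that come wrapped inside the transport construction, so the resulting terms are less transparent. Unfolding the \(\beta\)-rules for \(\coind\), \(\can\) and \(\rec\), the inverses you obtain reduce to the paper's. For example, \(\leftinv\) of your structure on \(h\) reduces to \(\comp{\leftinv(e')}{g}\). So the two proofs differ only in the cancellators.

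One small point: framing the first step as a lemma about \(a,b\) conflates invertibility of \(\comp{a}{b}\) with \(\comp{a}{b}\) being connected to an identity by an invertible cell. The concrete construction you then give from \(e\) and \(e'\) does not depend on that framing and is correct.
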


We refer the reader to the file \verb|proofs/invertibility.catt| of the
supplementary material or \Cref{app:proofs-artefact} for the explicit
constructions corresponding to each proposition.


\section{Applications to \texorpdfstring{\(\omega\)}{ω}-categories}
\label{sec:cattinv-semantics}

Since \cattinv is an extension of the type theory \catt, there exists a morphism
of clans \(\syntaxincl\colon\Syn{\catt}\to\Syn{\cattinv}\), giving rise to an
adjunction
\[
  \begin{tikzcd}
    {\syntaxincl^*\colon \Mod(\cattinv)} & {\omegacat \colon \syntaxincl_!}
    \arrow[""{name=0, anchor=center, inner sep=0},
      shift left=1.2, from=1-1, to=1-2]
    \arrow[""{name=1, anchor=center, inner sep=0},
      shift left=1.2, from=1-2, to=1-1]
    \arrow["\dashv"{anchor=center, rotate=90}, draw=none, from=1, to=0]
  \end{tikzcd}
\]
where \(\syntaxincl^*\) is given by precomposition with \(\syntaxincl\),
as shown by \citet[Section~4]{jonasDualityClans2025}. To avoid clash of
notation, we will denote representable models of \cattinv instead by:
\begin{align*}
  \minterp{-} \colon \Syn{\cattinv}^{\op} &\to \Mod(\cattinv) \\
  \minterp{\Gamma}(\Delta) &= \Syn{\cattinv}(\Gamma,\Delta)
\end{align*}
We will show that \(\syntaxincl\) is fully faithful, and in particular, \cattinv
is a conservative extension of \catt. Furthermore, we will show that
\(\syntaxincl^*\minterp{\Equiv{1}}\) is the walking equivalence of
\citet{ozornovaWhatEquivalenceHigher2024}, used by
\citet{fujii$o$equifibrationsStrictWeak2025} to classify equifibrations.

\subsection{Conservativity}

To show that \cattinv is conservative over \catt, we will use the normalisation
strategy described in \Cref{subsec:metatheory}.

\begin{lemma}\label{lemma:nf-in-cattctx}
  Given a context \(\Gamma\) of \catt, and a categorical type
  \(\typing{\Gamma}{A}\) in \cattinv, then \(\typing{\Gamma}{\nf(A)}\) in \catt.
  Moreover, for any categorical term \(\typing{\Gamma}{t}[A]\) in \cattinv, then
  \(\typing{\Gamma}{\nf(t)}[\nf(A)]\) in \catt.
\end{lemma}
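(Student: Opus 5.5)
We argue by mutual induction on the structure of the normal forms \(\nf(A)\) and \(\nf(t)\), using the observation recorded after \Cref{prop:normalising}. Categorical types and terms in normal form contain no occurrence of \(\can\), \(\rec\) or \(\coind\). Hence the only invertibility structures they can contain are variables of \(\Inv\) type, or iterated applications of \(\leftunitwitness\) and \(\rightunitwitness\) to such variables. Since \(\Gamma\) is a \catt context, it has no variables of \(\Inv\) type at all. So no invertibility structure can occur in \(\nf(A)\) or \(\nf(t)\). Consequently the destructors \(\leftinv,\rightinv,\leftunit,\rightunit\) cannot occur either, since each of them needs an argument of \(\Inv\) type. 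The normal forms are therefore raw \catt expressions, built only from variables, \(\obj\), arrow types and \(\coh_{\Delta,B}[\gamma]\).

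It remains to show that these raw \catt expressions are well typed in \catt, with \(\nf(t)\) of type \(\nf(A)\). Since judgemental equality is preserved by reduction, \(\typing{\Gamma}{\nf(t)}[\nf(A)]\) holds in \cattinv. We then prove, by induction on the raw syntax, that a raw \catt expression typable in \cattinv over a \catt context is typable in \catt, with the same type up to normal form. The cases are as follows.
\begin{itemize}
  \item \emph{Variables.} The type is read off \(\Gamma\), whose types are \catt types. By the type-level induction these are their own normal forms.
  \item \emph{Arrow types.} We apply the induction hypothesis to the source and target. Their types agree in \cattinv, hence have equal normal forms, which are \catt types. This gives \ruleref{rule:arr-intro1} or \ruleref{rule:arr-intro2}.
  \item \emph{Coherences \(\coh_{\Delta,B}[\gamma]\).} Here \(\Delta\) is a pasting diagram, hence a \catt context. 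By the type-level statement applied over \(\Delta\), \(B\) is a \catt type. Pasting diagrams and full types are judged by auxiliary rules unchanged in \cattinv. The substitution \(\gamma\) has \catt term components. Each component lies in the appropriate \(B'[\gamma]\), and we obtain this by induction together with the fact that \(\nf\) commutes with substitution on \catt expressions. Then \ruleref{rule:coh-intro} applies.
\end{itemize}

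The main obstacle is making rigorous the inversion of typing derivations in \cattinv. A \cattinv derivation of a \catt-shaped term may pass through conversions whose intermediate stages involve \(\Inv\)-terms, for instance a type \(\arr{\leftinv(\coind(\ldots))}{\ldots}\) converted to a \catt type. To handle this, we would first establish a syntax-directed inversion lemma. It states that if \(\typing{\Gamma}{t}[A]\) in \cattinv, then \(t\) has a canonical type determined by its head constructor, and \(A\) is \(\beta\eta\)-equivalent to it. Combining this with confluence in the form given by \Cref{prop:local-confluence}, restricted to the terminating dimension-bounded system from \Cref{prop:normalising}, equal normal forms follow. This reduces every case above to comparing normal forms, which are syntactic \catt objects.
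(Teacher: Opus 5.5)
Your proposal is correct and follows the paper's own argument. Normal forms of categorical types and terms can contain only neutral invertibility structures, namely variables and iterated applications of \(\leftunitwitness\) and \(\rightunitwitness\) to them; none of these exist over a \catt context, so the normal forms are raw \catt expressions. Your extra induction, using an inversion lemma and uniqueness of normal forms, spells out the step ``therefore they are valid in \catt'', which the paper treats as immediate.
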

\begin{proof}
  We recall that the only invertibility structures that may appear in \(\nf(A)\)
  and \(\nf(t)\) are either variables or iterated applications of destructors
  \(\leftunitwitness\) and \(\rightunitwitness\) on those. Since \(\Gamma\) is a
  context of \catt, it contains no variables that are invertibility structures.
  Thus neither \(\nf(A)\) nor \(\nf(t)\) may contain invertibility structures,
  and therefore, they are valid types and terms in \catt.
\end{proof}

\begin{theorem}\label{thm:conservativity}
  The theory \cattinv is conservative over \catt.
\end{theorem}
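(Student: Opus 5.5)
We first make precise what conservativity means here. For every context \(\Gamma\) of \catt, every type \(A\) and every term \(t\) that are well formed in \catt, three things must hold. First, if \(\typing{\Gamma}{A}\) holds in \cattinv then it already holds in \catt. Second, if \(\typing{\Gamma}{t}[A]\) holds in \cattinv then it already holds in \catt. Third, two such types or terms that are judgementally equal in \cattinv are syntactically equal, which is judgemental equality in \catt, since \catt has no computation rules. The plan is to derive all three from \Cref{lemma:nf-in-cattctx}, together with the fact that normalisation fixes pure \catt syntax.

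The first step is a small observation about \(\nf\). A categorical type or term written in the raw syntax of \catt contains no \(\Inv\), no destructors and no constructors of invertibility structures. So no \(\beta\)\-redex occurs in it. Moreover, \(\eta\)\-expansion only applies to terms of type \(\Inv\), and there are no such subterms. Hence such an expression is already in normal form for the restricted system used in the proof of \Cref{prop:normalising}, and \(\nf(t)=t\) and \(\nf(A)=A\). This should follow by a routine induction on the syntax, using the definition of the strategy.

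With this in hand the remaining steps are direct.

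\textbf{Typing.} Suppose \(\Gamma\) is a \catt context and \(\typing{\Gamma}{t}[A]\) holds in \cattinv, where \(t\) and \(A\) are \catt syntax (so \(A\) is categorical). \Cref{lemma:nf-in-cattctx} gives \(\typing{\Gamma}{\nf(t)}[\nf(A)]\) in \catt. By the observation, \(\nf(t)=t\) and \(\nf(A)=A\), so the judgement already holds in \catt. The same argument applies to types.

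\textbf{Contexts and substitutions.} Validity of a context, and of a substitution between \catt contexts, follows by induction on length, applying the typing case componentwise.

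\textbf{Equality.} Suppose \(t\equiv t'\) in \cattinv, with both pure \catt terms. Since \(\nf\) selects a single representative of each \(\beta\eta\)\-class, \(t=\nf(t)=\nf(t')=t'\). Thus no new equations are introduced between \catt terms. The same holds for types, and componentwise for substitutions.

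The main obstacle is making the observation \(\nf(t)=t\) rigorous. The normalisation of \Cref{prop:normalising} is phrased through a dimension-guarded rewrite system, so we must check that this strategy really returns the input when there is nothing to rewrite. This holds because the system has no \(\beta\)\-redexes and no \(\Inv\)\-typed subterms to \(\eta\)\-expand. We also need that the strategy is well defined on equivalence classes. In particular, equality in \cattinv may pass through intermediate terms involving \(\can\), \(\rec\) or \(\coind\) before returning to \catt syntax, and uniqueness of normal forms in the restricted system (termination together with local confluence) handles exactly this. A final detail is that \Cref{lemma:nf-in-cattctx} is stated for categorical judgements; this suffices because every type and term of \catt is categorical.
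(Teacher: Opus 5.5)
\textbf{Gap: you prove a different notion of conservativity than the paper does.} You use the same ingredients as the paper: \Cref{lemma:nf-in-cattctx} together with \(\nf(A)=A\) for types written in \catt syntax. But the paper's theorem is conservativity in the \emph{inhabitation} sense. For a \catt context \(\Gamma\) and a \catt type \(A\), any term \(\typing{\Gamma}{u}[A]\) of \cattinv yields the \catt term \(\nf(u)\), of type \(\nf(A)=A\), so \(A\) was already inhabited in \catt.

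The point is that \(u\) is arbitrary \cattinv syntax. Even over a \catt context it can use the new term formers. For example, \(\leftinv(\can(\id x,\{\}))\) is a term of type \(\arr{x}{x}\) in \((x\ty\obj)\). Your typing clause only considers terms \(t\) that are already pure \catt syntax. So it shows that derivability of \catt-syntax judgements is not enlarged. It does not show that the new constructors can be eliminated from inhabitants of \catt types, which is the actual content of the theorem.

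The repair is one line with what you already have. Apply \Cref{lemma:nf-in-cattctx} to an arbitrary \(\typing{\Gamma}{u}[A]\) rather than arguing via \(\nf(t)=t\), and conclude with \(\nf(A)=A\). This is exactly the paper's proof. It also subsumes your typing clause, since \(\nf(t)=t\) for \catt terms.

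Your equality clause is correct, and the argument \(t=\nf(t)=\nf(t')=t'\) is the one the paper uses. In the paper, however, it belongs to the injectivity half of \Cref{prop:incl-ff}, not to this theorem. The surjectivity half of that proposition is the substitution-level version of precisely the inhabitation statement your proposal omits.
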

\begin{proof}
  Consider a context \(\Gamma\) in \catt, a type \(\typing{\Gamma}{A}\) in
  \(\catt\), and a term \(\typing{\Gamma}{u}[A]\) in \cattinv. By
  \Cref{lemma:nf-in-cattctx}, this gives a derivation of
  \(\typing{\Gamma}{\nf(u)}[\nf(A)]\) in \catt. Since \(A\) is a \catt type, it
  is in normal form, \(\nf(A) = A\), thereby showing that \(A\) was already
  inhabited in \catt.
\end{proof}

\begin{proposition}\label{prop:incl-ff}
  The functor \(\syntaxincl\ty\Syn{\catt}\to\Syn{\cattinv}\) is fully
  faithful.
\end{proposition}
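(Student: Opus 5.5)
We need to show that for \catt contexts \(\Gamma,\Delta\), the map from \catt substitutions \(\typing{\Delta}{\gamma}[\Gamma]\) up to judgemental equality in \catt, to \cattinv substitutions between the same contexts up to judgemental equality in \cattinv, is a bijection. The plan is to reduce everything to terms, using \Cref{lemma:sub-to-disk} and its \cattinv analogue \Cref{lemma:famillial-representability}, and then to use the normalisation procedure \(\nf\) of \Cref{prop:normalising} together with \Cref{lemma:nf-in-cattctx}.

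For fullness we proceed by induction on the length of \(\Gamma\). Take a \cattinv substitution \(\typing{\Delta}{\gamma}[\Gamma]\), where \(\Gamma = \Gamma'\extctx(x\ty A)\) and \(\gamma = \gamma'\extsub(x\mapsto t)\). By induction \(\gamma'\) is judgementally equal in \cattinv to a \catt substitution \(\gamma'_0\). The term \(t\) is categorical of type \(A[\gamma']\equiv A[\gamma'_0]\), since \(A\) is a \catt type and so categorical. By \Cref{lemma:nf-in-cattctx}, \(\nf(t)\) is a \catt term of type \(\nf(A[\gamma'_0])\). Here \(A[\gamma'_0]\) is already a \catt type, and \catt syntax contains no redexes and nothing to \(\eta\)-expand, so it is its own normal form. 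Hence \(\gamma'_0\extsub(x\mapsto\nf(t))\) is a \catt substitution \(\beta\eta\)-equal to \(\gamma\).

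Faithfulness amounts to showing that two \catt terms (or types) which are judgementally equal in \cattinv are syntactically equal. Since \catt has no nontrivial judgemental equality, it suffices to prove the following: if \(u\equiv u'\) in \cattinv with \(u,u'\) categorical \catt terms, then \(\nf(u)=\nf(u')\). This holds because \(\nf\) picks a representative of each \(\beta\eta\)-class, by \Cref{prop:normalising}. On the other hand, \(\nf(u)=u\) holds for any \catt term, since \catt terms contain no destructors, \(\can\), \(\rec\) or \(\coind\), and no invertibility structures to expand. Therefore \(u=u'\). Substitutions are compared componentwise, so faithfulness follows.

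The main obstacle is justifying that \(\nf\) is the identity on \catt syntax and is well defined on \(\beta\eta\)-classes. This requires that judgemental equality in \cattinv on categorical terms is exactly generated by the oriented \(\beta\)/\(\eta\) rules. It also requires that no derivation forces an equality between distinct \catt terms through terms of \(\Inv\) type, for example via the \(\can\) computation rules producing \(t^{\leftinv}\), which are themselves \catt terms. I would first check this using the dimension-restricted system from the proof of \Cref{prop:normalising}. That system is terminating and locally confluent, hence confluent by Newman's lemma, and its normal forms of \catt terms are the terms themselves. Any \(n\)-dimensional categorical \catt term is therefore fixed, and two such terms are convertible only if they are equal.
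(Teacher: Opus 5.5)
Your proposal is correct and follows the paper's proof: induction on the target context. Surjectivity comes from replacing the last component \(t\) by \(\nf(t)\) via \Cref{lemma:nf-in-cattctx}, using that the substituted \catt type is already in normal form; injectivity comes from \(u=\nf(u)=\nf(u')=u'\). Your final caveat names a dependency the paper leaves implicit: both arguments rest on \Cref{prop:normalising}, namely that conversion on categorical terms is captured by the confluent restricted system and that \catt terms are already normal.
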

\begin{proof}
  We consider two contexts \(\Gamma\) and \(\Delta\) in \catt and show by
  induction on \(\Delta\) that the function
  \(\syntaxincl\ty\Syn{\catt}(\Gamma,\Delta) \to
  \Syn{\cattinv}(\syntaxincl\Gamma,\syntaxincl\Delta)\) is a bijection. The base
  case is given by the fact that \(\syntaxincl\) preserves the terminal object
  in both theories. For the inductive case,
  we consider a type \(\typing{\Delta}{A}\) in \catt. We first prove
  injectivity: Consider two substitutions whose images by \(\syntaxincl\) are
  equal:
  \begin{align*}
    &\typing{\Gamma}{\gamma\extsub(x\mapsto u)}[\Delta\extctx(X\ty A)] \\
    &\typing{\Gamma}{\gamma'\extsub(x\mapsto u')}[\Delta\extctx(X\ty A)]
  \end{align*}
  Then by induction
  \(\gamma = \gamma'\), and \(u = \nf(u) =\nf(u') = u'\),
  proving injectivity. We then prove surjectivity by assuming a
  substitution
  \(\typing{\syntaxincl\Gamma}{\gamma\extsub (x\mapsto
    u)}[\syntaxincl(\Delta\extctx (x\ty A))]\). By induction we get a
  substitution \(\typing{\Gamma}{\bar{\gamma}}[\Delta]\) such that
  \(\syntaxincl\bar\gamma = \gamma\), and by \Cref{lemma:nf-in-cattctx}, we get
  a term \(\typing{\Gamma}{\nf(u)}[\nf((\syntaxincl A)[\gamma])]\). We then note
  that \(\syntaxincl(A[\bar\gamma])\) is a type in normal form equal to
  \((\syntaxincl A)[\gamma]\), thus \(\nf(u)\) is of type \(A[\bar\gamma]\),
  allowing us to build the substitution
  \[
    \typing{\Gamma}{\bar{\gamma}\extsub(x\mapsto \nf(u))}[\Delta \extctx (x\ty
    A)]
  \]
which gives a preimage of \(\gamma\extsub(x\mapsto u)\).
\end{proof}

\begin{corollary}\label{coro:minterp-restriction}
  Given a \catt context \(\Gamma\), we have
  \(\syntaxincl^{*}\minterp{\Gamma} = \interp{\Gamma}\)
\end{corollary}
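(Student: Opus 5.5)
We unfold both sides and compare them at every context. Here \(\minterp{\Gamma}\) is the functor \(\Syn{\cattinv}(\Gamma,-)\) and \(\interp{\Gamma}\) is \(\Syn{\catt}(\Gamma,-)\). Precomposing with \(\syntaxincl\) gives
\[
  (\syntaxincl^{*}\minterp{\Gamma})(\Delta) = \Syn{\cattinv}(\syntaxincl\Gamma,\syntaxincl\Delta)
\]
for every \catt context \(\Delta\). The functor \(\syntaxincl\) induces a map
\[
  \interp{\Gamma}(\Delta) = \Syn{\catt}(\Gamma,\Delta) \to \Syn{\cattinv}(\syntaxincl\Gamma,\syntaxincl\Delta),
\]
and these maps assemble into a natural transformation \(\interp{\Gamma}\to\syntaxincl^{*}\minterp{\Gamma}\). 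Naturality in \(\Delta\) is just functoriality of \(\syntaxincl\): for a morphism \(\Delta\to\Delta'\) in \(\Syn{\catt}\), both composites send \(\gamma\) to \(\syntaxincl(\delta\circ\gamma)=\syntaxincl\delta\circ\syntaxincl\gamma\).

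Next we check that each component is a bijection. This is exactly the statement that \(\syntaxincl\) is fully faithful, which is \Cref{prop:incl-ff}. A natural transformation that is bijective at every object is an isomorphism of functors \(\Syn{\catt}\to\Set\). Both \(\interp{\Gamma}\) and \(\syntaxincl^*\minterp{\Gamma}\) are models: the first because representables are models, the second because \(\syntaxincl\) is a morphism of clans, so precomposition preserves models. The category of models is a full subcategory of the functor category, so the natural isomorphism is an isomorphism in \(\omegacat\). This gives the claimed identification \(\syntaxincl^{*}\minterp{\Gamma}=\interp{\Gamma}\), read up to this canonical isomorphism.

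No step is a real obstacle, since all the content sits in \Cref{prop:incl-ff}. The only point needing care is variance: \(\minterp{-}\) is covariant representable out of \(\Gamma\), so the relevant hom-sets are the ones with \(\Gamma\) as first argument. These are precisely the hom-sets that \Cref{prop:incl-ff} shows are in bijection, with the map given by \(\syntaxincl\).
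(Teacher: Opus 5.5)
Your proposal is correct and matches the paper's argument. The corollary is stated without proof right after \Cref{prop:incl-ff}, because the components of the comparison \(\interp{\Gamma}\to\syntaxincl^{*}\minterp{\Gamma}\) are exactly the hom-set maps of \(\syntaxincl\), which that proposition shows are bijective; your reading of the equality as this canonical natural isomorphism is also the right interpretation.
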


\subsection{Walking equivalences}

We show that \(\syntaxincl^*\minterp{\Equiv{1}}\) is the walking equivalence
\(\Equiv{1}_{\OR}\) of \citet{ozornovaWhatEquivalenceHigher2024}. We first
recall the definition of the latter using the equivalence between \catt contexts
and finite computads of \citet{benjaminCaTTContextsAre2024}. For that, we
define a sequence of contexts \(\Equiv{1,n}\) by induction, together with, for
\(n\geq 1\), three substitutions
\begin{align*}
  \typing{\Equiv{1,n}}{i^{n}}[\Equiv{1,n-1}] &&
  \typing{\Equiv{1,n}}{f^{n}}[\Susp\Equiv{1,n-1}] &&
  \typing{\Equiv{1,n}}{g^{n}}[\Susp\Equiv{1,n-1}]
\end{align*}
where \(i^{n}\) is a display map. In the base case \(n = 0\), we define:
\[
  \Equiv{1,0} = \sphere{0} = (x\ty \obj) \extctx (y\ty \obj)
\]
Then, for \(n=1\), we define
\begin{align*}
  \Equiv{1,1} &= \Equiv{1,0} \extctx(u \ty \arr{x}{y}) \extctx (v\ty \arr{y}{x})
                \extctx (w\ty \arr{y}{x})\\
  i^{1} &= (x\mapsto x) \extsub(y \mapsto y) \\
  f^{1} &= (v^- \mapsto x)\extsub(v^+ \mapsto x)\extsub(x \mapsto
          v\ast u) \extsub(y \mapsto \id y) \\
  g^{1} &= (v^- \mapsto y)\extsub(v^+ \mapsto y)\extsub(x \mapsto
          u\ast w) \extsub(y \mapsto \id x)
\end{align*}
where \(v^\pm\) are the new variables introduced by the suspension. The
substitution \(i^{1}\) is a display map, since it is the composite of three
weakening substitutions. We then define the context \(\Equiv{1,n+1}\) together
with the three substitutions as the following limit in \(\Syn{\cattinv}\)
indicated with the solid part of the diagram:
\begin{equation}\label{eq:def-e1n}
\begin{tikzcd}
  \Susp \Equiv{1,n}\ar[d,"\Susp{i^{n}}"']
  & \Equiv{1,n+1}
    \ar[d,dashed,"i^{n+1}"]
    \ar[l, dashed, "f^{n+1}"']
    \ar[r, dashed, "g^{n+1}"]
  & \Susp\Equiv{1,n} \ar[d,"\Susp{i^{n}}"]\\
  \Susp{\Equiv{1,n-1}}
  & \Equiv{1,n}
    \ar[l, "f^{n}"]
    \ar[r, "g^{n}"']
  & \Susp{\Equiv{1,n-1}}
\end{tikzcd}
\end{equation}
Since \(\Susp\) preserves display maps, \(\Susp{i^{n}}\) is a display map, and
this limit can be computed as a sequence of two pullbacks along
\(\Susp{i^{n}}\), showing existence. This also realises \(i^{n+1}\) as a
composite of two display maps, showing that it is also a display map.
Intuitively, the context \(\Equiv{1,n}\) is the \(n\)-truncation of the walking
equivalence. The context \(\Equiv{1,2}\) can be explicitly computed to be:
\begin{align*}
  \Equiv{1,2}
    &= \Equiv{1,1} \extctx(u_{v} \ty \arr{v\ast u}{\id y})
                \extctx \\
    & \qquad\qquad (v_{v} \ty \arr{\id y}{v\ast u}) \extctx
      (w_{v} \ty \arr{\id y}{v\ast u}) \extctx \\
    & \qquad\qquad (u_{w}\ty \arr{u\ast w}{\id x}) \extctx \\
    & \qquad\qquad  (v_{w}\ty \arr{\id x}{u\ast w})
      \extctx (w_{w}\ty \arr{\id x}{u\ast w})
\end{align*}
We note that the size of these contexts increases exponentially. The walking
equivalence \(\Equiv{1}_{\OR}\) is then the colimit of the
\(\omega\)\-categories presented by these contexts:
\[
  \Equiv{1}_{\OR} = \colim\left(\
    \begin{tikzcd}[cramped]
      \interp{\Equiv{1,0}} \ar[r,"\interp{i^{1}}"]
      & \interp{\Equiv{1,1}} \ar[r,"\interp{i^{2}}"]
      & \interp{\Equiv{1,2}} \ar[r,"\interp{i^{3}}"]
      & \ldots
    \end{tikzcd}\
    \right)
\]
This colimit is a computad, but it cannot be described as a context of \catt,
since it has infinitely many generators. Nonetheless, it can be presented by a
context of \cattinv.

\begin{theorem}\label{thm:colimit}
  There is an isomorphism of \(\omega\)\-categories:
  \[
    \syntaxincl^{*}\minterp{\Equiv{1}} \cong \Equiv{1}_{\OR}
  \]
\end{theorem}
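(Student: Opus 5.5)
We construct compatible maps \(\phi_n\colon \interp{\Equiv{1,n}} \to \syntaxincl^*\minterp{\Equiv{1}}\) and show that the induced map out of the colimit is bijective on cells. This suffices because a morphism of models is invertible exactly when it is bijective on cells.

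By the Yoneda lemma and \Cref{coro:minterp-restriction}, a map \(\interp{\Equiv{1,n}} \to \syntaxincl^*\minterp{\Equiv{1}}\) is the same as a substitution \(\typing{\Equiv{1}}{\sigma_n}[\Equiv{1,n}]\) in \cattinv. Since \(\syntaxincl\) preserves the relevant pullbacks along display maps, we may also compute the limits \eqref{eq:def-e1n} in \(\Syn{\cattinv}\). We then define \(\sigma_n\) by induction on \(n\), together with the auxiliary substitutions
\[\sigma_n^{e}\colon \Equiv{1}\to\Equiv{1,n}\]
for every \emph{destructor path} \(e\). Here a destructor path \(e\) is a word in \(\leftunitwitness,\rightunitwitness\) applied to \(\evar_1\), and \(\sigma_n^{e}\) is meant to classify the \(n\)-truncated structure sitting over the invertibility term \(e\).
\begin{itemize}
  \item \emph{Base case.} \(\sigma_0\) sends \(x,y\) to \(\dvar_1^\pm\).
  \item \emph{First step.} \(\sigma_1\) sends \(u\mapsto\dvar_1\), \(v\mapsto\leftinv(\evar_1)\) and \(w\mapsto\rightinv(\evar_1)\).
  \item \emph{Inductive step.} \(\sigma_{n+1}\) is obtained from the universal property of the limit \eqref{eq:def-e1n}. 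We pair \(\sigma_n\) with the suspended data \(\Susp\sigma_n\) precomposed with \(\classifierequiv{\leftunitwitness(\evar_1)}\) and with \(\classifierequiv{\rightunitwitness(\evar_1)}\), exactly as in the instantiation \(\instantiation{r}\).
\end{itemize}
The squares in \eqref{eq:def-e1n} commute because the types of \(\leftunit(\evar_1)\) and \(\rightunit(\evar_1)\) are \(\arr{\comp{\leftinv(\evar_1)}{\dvar_1}}{\id y}\) and its mirror. These match \(f^1\) and \(g^1\) up to the variable-naming convention of \(\Equiv{1,1}\), namely \(u_v\) and \(u_w\). Compatibility \(\sigma_{n+1}\circ i^{n+1}=\sigma_n\) holds by construction.

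Next comes \textbf{injectivity and surjectivity on cells}. An \(m\)-cell of \(\syntaxincl^*\minterp{\Equiv{1}}\) is, by \Cref{lemma:famillial-representability}, a categorical term of dimension \(m\) in \(\Equiv{1}\), up to \(\beta\eta\). By \Cref{prop:normalising} it has a unique normal form.

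By the remark after \Cref{prop:normalising}, such a normal form contains no \(\can\), \(\rec\) or \(\coind\). The invertibility terms that can occur in it are therefore only destructor paths \(e\) on \(\evar_1\), and they appear only through \(\leftinv\), \(\rightinv\), \(\leftunit\) and \(\rightunit\). Moreover, the normal form uses only finitely many such paths, of length bounded by about \(m\), by dimension counting.

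We then build a context \(\Equiv{1,N}'\) in \catt with one variable for each categorical destructor application appearing up to depth \(N\), and show it is isomorphic to \(\Equiv{1,N}\). This is an induction on \(N\) matching the two pullbacks of \eqref{eq:def-e1n} with the two inductive-hypothesis branches \(\leftunitwitness\) and \(\rightunitwitness\). The upshot is that normal categorical terms in \(\Equiv{1}\) of bounded depth correspond to \catt terms over \(\Equiv{1,N}\), via replacing destructor applications by variables.

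Surjectivity follows, since every cell lies in the image of some \(\phi_N\). For injectivity, normal forms are unique, and the replacement of destructor applications by variables is injective on \catt terms. Hence two cells of the colimit that are identified in \(\syntaxincl^*\minterp{\Equiv{1}}\) already agree in some \(\interp{\Equiv{1,N}}\); this uses that the maps \(\interp{i^n}\) are injective, being inclusions of computads.

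The main obstacle is this last identification. We must check that the normal form of a categorical term in \(\Equiv{1}\) really behaves like a \catt term over the infinite set of generators \(\{\leftinv(e),\rightinv(e),\leftunit(e),\rightunit(e)\}_e\). Concretely, the typing of \(\leftunit(e)\) must match the boundary prescribed by \(f^n\) and \(g^n\), and no \(\beta\eta\)-conversion may identify distinct such \catt terms. I would first attempt this by a simultaneous induction on \(N\) and on the length of destructor paths. This reuses the argument of \Cref{lemma:nf-in-cattctx}, but now treats destructor applications on variables as atoms rather than as impossible.
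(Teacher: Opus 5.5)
Your proposal follows essentially the same route as the paper: the same cone $\sigma_n\colon\Equiv{1}\to\Equiv{1,n}$ is built from the universal property of the limits \eqref{eq:def-e1n}, using $\Susp\sigma_{n}$ precomposed with $\classifierequiv{\leftunitwitness(\evar_{1})}$ and $\classifierequiv{\rightunitwitness(\evar_{1})}$, and it is followed by a normal-form argument that matches neutral categorical terms of $\Equiv{1}$ with variables of $\Equiv{1,n}$ and coherences with coherences (you check bijectivity on cells, the paper checks it on substitutions into all \catt contexts of dimension at most $n$, which amounts to the same). The obstacle you flag is precisely the paper's Claim, and it is lighter than you fear: since $\sigma_N$ comes from the universal property it is automatically well typed, so no auxiliary copy of $\Equiv{1,N}$ is needed---one just matches the $3\cdot 2^{n-1}$ new variables of $\Equiv{1,n}$ with the $3\cdot 2^{n-1}$ categorical neutrals of dimension $n$, injectively on each of the $\leftunitwitness$ and $\rightunitwitness$ branches---and well-typedness of preimages then follows from uniqueness of normal forms together with the syntactic injectivity of $-[\sigma_N]$.
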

\begin{proof}
  We recall that \(\Equiv{1}\) is the following context of \cattinv :
  \[
    \Equiv{1}
      = (\dvar^{-}_{0} \ty \obj) \extctx
         (\dvar^{+}_{0} \ty \obj) \extctx
         (\dvar_{1} \ty \arr{\dvar^{-}_{0}}{\dvar^{+}_{0}}) \extctx
         (\evar_{1} \ty \Inv(\dvar_{1}))
  \]
  and we define a cone in \(\Syn{\cattinv}\) consisting of substitutions
  \[
    \typing{\Equiv{1}}{\gamma^{n}}[\Equiv{1,n}]
  \]
  satisfying the following compatibility conditions
  \begin{align*}
    i^{n}\circ\gamma^{n} &= \gamma^{n-1} \\
    f^{n} \circ \gamma^{n} &= \Susp\gamma^{n-1} \circ
                             \classifierequiv{\leftunitwitness(\evar_{1})} \\
    g^{n} \circ \gamma^{n} &= \Susp\gamma^{n-1} \circ
                             \classifierequiv{\rightunitwitness(\evar_{1})}
  \end{align*}
  where \(\typing{\Equiv{1}}{\classifierequiv{\leftunitwitness(\evar_{1})}}[{\Susp\Equiv{1}}]\)
  and \(\typing{\Equiv{1}}{\classifierequiv{\rightunitwitness(\evar_{1})}}[{\Susp\Equiv{1}}]\)
  are obtained by \Cref{lemma:famillial-representability} using that
  \(\Susp \Equiv{1} = \Equiv{2}\).
  In the base cases, we let:
  \begin{align*}
    \gamma^{0}
      &= (x \mapsto \dvar_{0}^{-}) \extsub (y \mapsto \dvar_{0}^{+}) \\
    \gamma^{1}
      &= \gamma^{0} \extsub (u \mapsto \dvar_{1}) \extsub (v \mapsto
          \leftinv(\evar_{1}))\extsub(w\mapsto \rightinv(\evar_{1}))
  \end{align*}
  It is straightforward from the definition of the involved substitutions that
  \(\gamma^{1}\) satisfies all three required equations. For the inductive case,
  we use that \(\syntaxincl\) preserves pullbacks along display maps, hence in
  particular the limit of the diagram \eqref{eq:def-e1n}. By the universal
  property of this limit, we get unique substitution \(\gamma^{n+1}\) fitting
  in the following diagram:
  \[
    \begin{tikzcd}
      \Susp\Equiv{1} \ar[d,"\Susp\gamma^{n}"']
      & \Equiv{1}
        \ar[l, "\classifierequiv{\leftunitwitness(\evar_{1})}"']
        \ar[r, "\classifierequiv{\rightunitwitness(\evar_{1})}"]
        \ar[d,dashed,"\gamma^{n+1}"]
      & \Susp\Equiv{1}  \ar[d,"\Susp\gamma^{n}"] \\
      \Susp \Equiv{1,n}\ar[d,"\Susp{i^{n}}"']
      & \Equiv{1,n+1}
        \ar[d, "i^{n+1}"]
        \ar[l, "f^{n+1}"']
        \ar[r, "g^{n+1}"]
      & \Susp\Equiv{1,n} \ar[d,"\Susp{i^{n}}"]\\
      \Susp{\Equiv{1,n-1}}
      & \Equiv{1,n}
        \ar[l, "f^{n}"]
        \ar[r, "g^{n}"']
      & \Susp{\Equiv{1,n-1}}
    \end{tikzcd}
  \]

  \begin{claim}\label{claim:neutral}
    The substitution \(\gamma^n\) induces a
    bijection between variables of \(\Equiv{1,n}\) and neutral categorical
    terms of \(\Equiv 1\) of dimension at most \(n\).
  \end{claim}

  \begin{proof}[Proof of the claim]
    We proceed by induction on \(n\). The only neutrals of dimension \(n=0\)
    are the variables \(\dvar_0^\pm\) from which the base case follows. The
    categorical neutrals of dimension \(n=1\) are precisely the variable
    \(\dvar_1\) and its inverses \(\leftinv(\evar_1)\) and
    \(\rightinv(\evar_1)\) proving the claim for \(n=1\). Suppose then that
    \(n\ge 2\).

    By induction, we can show that there exist exactly \(2^{n-1}\) neutrals of
    type \(\Inv\) in dimension \(n\), since they are of the form
    \(\leftunitwitness(e)\) and \(\rightunitwitness(e)\) for \(e\) a neutral
    invertibility structure one dimension lower. Hence, there are \(3\cdot
    2^{n-1}\) categorical neutrals of dimension \(n\) obtained either by
    \(\leftinv(e)\) and \(\rightinv(e)\) for \(e\) a neutral of dimension \(n\),
    or by \(\leftunit(e')\) and \(\rightunit(e')\) for \(e'\) neutral of
    dimension \((n-1)\).

    The top-dimensional variables of \(\Equiv{1,n}\) are the disjoint union of
    two copies of these of \(\Susp\Equiv{1,n-1}\). By induction, it follows that
    there exist exactly \(3\cdot 2^{n-1}\) of them. These are sent injectively
    to disjoint sets of neutrals by the substitutions
    \(\classifierequiv{\leftunitwitness(\evar_1)}\circ \Susp \gamma^{n-1}\) and
    \(\classifierequiv{\rightunitwitness(\evar_1)}\circ \Susp \gamma^{n-1}\) due
    to the inductive hypothesis and preservation of neutrals by the suspension.
    It follows that \(\gamma^{n}\) injectively maps variables to neutrals, and
    hence bijectively.
  \end{proof}

  Applying the functor \(\syntaxincl^*\minterp{-}\) to the family of
  substitutions \(\{\gamma^{n}\}\), we get a cocone by which we obtain a
  universal morphism out of the colimit:
  \[
    \gamma \colon\Equiv{1}_{\OR} \to \syntaxincl^{*}\minterp{\Equiv{1}}
  \]
  It remains to show that \(\gamma\) is a natural isomorphism of models. Since
  filtered colimits of models are computed
  pointwise~\cite{jonasDualityClans2025}, it suffices to show that for every
  context \(\Gamma\), the following is a bijection:
  \[
    \gamma_{\Gamma}\colon \colim_{n}(\interp{\Equiv{1,n}}(\Gamma)) \to
    \syntaxincl^{*}\minterp{\Equiv{1}}(\Gamma)
  \]
  Since \(i^n\) is a display map adding only variables of dimension \(n\), it is
  bijective on terms of dimension at most \((n-1)\). Hence, it is also bijective
  on substitutions to contexts of that dimension by structural induction. We
  have thus reduced the problem to:
  \[
    \minterp{\gamma^{n}}(\Gamma) \ty \Syn{\cattinv}(\Equiv{1,n},\Gamma) \to
    \Syn{\cattinv}(\Equiv{1},\Gamma)
  \]
  being bijective for every \catt context \(\Gamma\) of dimension at most \(n\).
  We will do this by strong induction on \((n, \dim \Gamma)\). For fixed \(n\),
  the case where \(\dim \Gamma < n\) follows by:
  \[
    \minterp{i^{n}}(\Gamma)\colon
    \Syn{\cattinv}(\Equiv{1,n},\Gamma)\to\Syn{\cattinv}(\Equiv{1,n-1},\Gamma)
  \]
  being bijective and the inductive hypothesis for \(n-1\). Suppose then that
  \(\dim \Gamma = n\) and proceed by structural induction.

  Suppose first that
  \(\Gamma = \disk{n}\) and let
  \(\typing{\Equiv{1}}{\classifiertm{t}}[\Gamma]\) a substitution.
  If the normal form of \(t\) is a neutral, then
  there exists a unique variable of \(\Equiv{1,n}\) mapping onto it by the
  claim. If the normal form of \(t\) is
  \(\coh_{\Delta,A}[\delta]\), then by structural induction, there exists unique
  map \(\delta'\) such that \(\delta = \delta' \circ\ \gamma^n\). Hence
  \(\coh_{\Delta,A}[\delta']\) is the unique preimage of \(t\) under
  \(\gamma^n\), since no variable is sent to a coherence by \(\gamma^n\).

  Suppose then that \(\Gamma = \Gamma'\extsub (x\ty A)\) is an arbitrary
  context of dimension \(n\ge 0\), and let
  \(\typing{\Equiv{1}}{\sigma\extsub(x\mapsto t)}[\Gamma]\) be a substitution.
  By structural induction, we may assume that there exist unique \(\sigma'\)
  and \(t'\) such that
  \begin{align*}
    \sigma &= \sigma' \circ \gamma^n & t &= t'[\gamma^n]
  \end{align*}
  By uniqueness of morphisms out of \(\sphere{k}\) for \(k < n\), we have that
  \(\typing{\Equiv{1,n}}{t'}[{A[\sigma']}]\). Therefore,
  \(\Equiv{1,n}{\sigma'\extsub(x\mapsto t')}[\Gamma]\) is the unique preimage of
  \(\sigma\extsub(x\mapsto t)\) under \(\minterp{\gamma^{n}}(\Gamma)\).
\end{proof}

\begin{corollary}
  The equifibrations are exactly the morphisms with the right lifting property
  against the family of morphisms:
  \[
    \set*{\syntaxincl^{*}\minterp{\classifiertm{\src\dvar_{n}}} \ty
    \interp{\disk{n}} \to
    \syntaxincl^{*}\minterp{\Equiv{n+1}}\;|\; n\in\mathbb{N}}
    \]
\end{corollary}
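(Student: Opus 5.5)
The plan is to rewrite the right lifting property against \(\syntaxincl^*\minterp{\classifiertm{\src\dvar_n}}\) as precisely the lifting condition of \Cref{def:equifibration}. The key tool is representability. By \Cref{lemma:famillial-representability}, a substitution into \(\Equiv{n+1}\) is the same as a term of an \(\Inv\) type, and \(\Equiv{n+1}\) is the walking equivalence. So I first identify morphisms \(\syntaxincl^*\minterp{\Equiv{n+1}} \to X\) with pairs of an \(n\)-cell \(t\) of \(X\) and an invertibility structure on it in the sense of \Cref{def:invertibility-structure}.

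For \(n=0\) this follows from \Cref{thm:colimit}. A morphism \(\Equiv{1}_{\OR}\to X\) is a compatible family of maps \(\interp{\Equiv{1,k}}\to X\). Unwinding the inductive definition \eqref{eq:def-e1n}, these are exactly finite-depth truncations of the coinductive data: \(f^{k}\) and \(g^{k}\) send the next level to the invertibility data of the cancellators \(t^{\leftunit}\) and \(t^{\rightunit}\). Passing to the limit therefore gives exactly an invertibility structure on a \(1\)-cell.

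For general \(n\), I would use \(\Equiv{n+1}=\Susp^n\Equiv{1}\). Then I would either repeat the proof of \Cref{thm:colimit} for suspended contexts \(\Susp^n\Equiv{1,k}\), which carries through verbatim since suspension preserves display maps, pullbacks along them, and neutrals, or argue directly that the suspended towers classify invertible \(n\)-cells. Under this identification the map \(\classifiertm{\src\dvar_n}\) corresponds to restricting an invertible cell \(e\colon y\to x\) to its source. One must fix an orientation convention here: \Cref{def:equifibration} lifts at the target, while the map uses the source. Since left and right inverses are invertible (\Cref{prop:left-inv}), an invertible cell with prescribed target can be traded for one with prescribed source, so the two conditions are equivalent.

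With these identifications, a lifting square consists of:
\begin{itemize}
  \item an \(n\)-cell \(x\) of \(X\), namely the map \(\interp{\disk n}\to X\);
  \item an invertible cell \(e\) of \(Y\) with endpoint \(f(x)\), namely the map \(\syntaxincl^*\minterp{\Equiv{n+1}}\to Y\), with commutativity matching the endpoints.
\end{itemize}
A diagonal filler is an invertible \(\bar e\) in \(X\) with that endpoint \(x\) whose image, together with its invertibility structure, is \(e\).

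This gives one direction of the equivalence. Right lifting gives an invertible \(\bar e\) with \(f\bar e=e\), so \(f\) is an equifibration.

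The converse is the main obstacle. \Cref{def:equifibration} only asks for \(f\bar e=e\) as cells, whereas a filler must also map the entire invertibility structure of \(\bar e\) onto the chosen one on \(e\). To overcome this, I would first invoke the result of \citet{fujii$o$equifibrationsStrictWeak2025} that the equifibration condition is equivalent to lifting invertibility structures against their walking equivalence \(\Equiv{n+1}_{\OR}\), and then transport it along the isomorphism above. If a self-contained argument is preferred, I would build the lift of the structure coinductively. Using the equifibration property in dimension \(n\) and \(n+1\), I would successively lift the inverse, the cancellators, and then recursively their invertibility structures, each time using transport (\Cref{prop:transport}) and 2-of-6 (\Cref{prop:2of6}) to adjust lifted cells so their boundaries match exactly. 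This construction is then assembled level by level along the tower \(\Equiv{1,k}\).
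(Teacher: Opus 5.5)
Your main route is exactly the paper's proof, which consists of just two ingredients: identify $\syntaxincl^{*}\minterp{\Equiv{n+1}}$ with the walking equivalence via \Cref{thm:colimit} (suspended for general $n$), then invoke the lifting-property characterisation of equifibrations by Fujii et al. Your extras are unnecessary and, as stated, not sound:
\begin{itemize}
\item The source/target trade does not follow from inverses being invertible alone. If $\bar h$ lifts an inverse of the prescribed cell $e$, then $f(\bar h^{\leftinv})$ is merely some inverse of that inverse, not $e$ on the nose. You would need to lift the structure on the left inverse built in \Cref{prop:left-inv}, whose inverses are the original cell itself, or else correct by a lift one dimension up.
\item Your self-contained coinductive sketch overlooks that the cancellators must be lifted with both endpoints prescribed, while two parallel invertible cells need not be related by any higher cell. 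One-endpoint lifting plus transport and 2-of-6 therefore cannot close the argument; that is precisely what the cited theorem supplies.
\end{itemize}
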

\begin{proof}
  This is a consequence of \Cref{thm:colimit} and the work of
  \citet[Theorems~4.3.1,~5.4.9]{fujii$o$equifibrationsStrictWeak2025}.
\end{proof}


\section{Models of \texorpdfstring{\cattinv}{ICaTT} and marked
  \texorpdfstring{\(\omega\)}{ω}-categories}
\label{sec:marked}
In \Cref{sec:cattinv-semantics}, we have studied the \(\omega\)-categories that
one may describe in \cattinv. One may note that the results of this section do
not rely on the term constructors \(\coind\), \(\can\) or \(\rec\), but only on
the presence of the six destructors. In this section, we show that the above
term constructors are precisely what is needed to construct a semantics in
fibrant marked \(\omega\)\-categories.

\subsection{Marked \texorpdfstring{\(\omega\)}{ω}-categories}

Marked \(\omega\)-categories have been considered in several recent works
on higher categories. Marking certain cells allows for defining finer notions of
invertibility as explained by \citet{loubatonInductiveModelStructure2023}. These
authors have introduced a family of model structures on marked strict
\(\omega\)\-categories with fibrant objects the marked \(\omega\)\-categories
in which marked cells are precisely these which have marked
inverses~\cite[Lemma~3.30]{loubatonInductiveModelStructure2023}. In the weak
case, marked \(\omega\)-categories were recently used by
\citet{fujii$o$equifibrationsStrictWeak2025} to characterise equifibrations
of weak \(\omega\)\-categories.

\begin{definition}
  A \emph{marked \(\omega\)-category} is an \(\omega\)-category \((X,tX)\)
  with a distinguished set of positive-dimensional cells satisfying the
  following saturation condition: for every morphism
  \(\gamma\colon \interp{\Gamma} \to X\) out of a pasting
  diagram and every full type \(\fulltyping{\Gamma}{A}\) of dimension
  \(n\) such that \(\interp{x}(\gamma)\) is marked for every
  variable \(x\in\Var_{n+1}(\Gamma)\), the cell
  \(\interp{\coh_{\Gamma,A}}(\gamma)\) is marked. We denote by
  \(\markedomegacat\) the category of marked \(\omega\)-categories and
  marking-preserving strict \(\omega\)-functors.
\end{definition}

We note that \citet{fujii$o$equifibrationsStrictWeak2025} do not require our
saturation condition, which is an analogue of the one imposed by
\citet{loubatonInductiveModelStructure2023}. Of course, an arbitrary collection
of cells may be closed under this saturation condition: given an
\(\omega\)\-category \(X\) and a set of cells \(S\), we define an increasing
sequence of sets by \(S_0 = S\) and
\[
  S_{n+1} = S_n \cup \left\{
    \ \interp{\coh_{\Gamma,A}}(\gamma)
    \begin{array}{l|l}
      & \psctxty{\Gamma}, \\
      & \fulltyping{\Gamma}{A} \text{ of dimension }n, \\
      & \gamma : \interp{\Gamma} \to X, \\
      & \forall
        x\in\Var_{n+1}{\Gamma},\ f(x)\in S_n
    \end{array}
  \right\}
\]
and we define \(\Sat(S)\) to be the union of this sequence. Since the operations
of \(\omega\)\-categories are finitary, this set provides the least marking on
\(X\) containing \(S\).

\begin{proposition}\label{prop:free-marked}
  The functor \(\forgetmarking\ty\markedomegacat\to\omegacat\) forgetting the
  marking has a left adjoint
  \(\freemarking{(-)}\) such that \(\forgetmarking\circ\
  \freemarking{(-)}=\id\).
\end{proposition}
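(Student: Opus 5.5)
The plan is to define the left adjoint by saturating the empty marking: for an \(\omega\)\-category \(X\), set \(\freemarking{X} = (X, \Sat(\emptyset))\). This is a marked \(\omega\)\-category because \(\Sat(S)\) is the least saturated marking containing \(S\), as discussed just before the statement. Forgetting the marking gives back \(X\) on the nose, so \(\forgetmarking\circ\freemarking{(-)}=\id\) on objects.

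On morphisms, a strict \(\omega\)\-functor \(f\colon X\to Y\) should be sent to \(f\) itself. The point to check is that \(f\) maps \(\Sat_X(\emptyset)\) into \(\Sat_Y(\emptyset)\). I would prove \(f(S_k)\subseteq S'_k\) by induction on the stage \(k\), where \(S_k\) and \(S'_k\) are the stages of the saturation in \(X\) and \(Y\). The base case \(k=0\) is trivial. For the inductive step, let \(c=\interp{\coh_{\Gamma,A}}(\gamma)\) be a cell added at stage \(k+1\), so its \((n+1)\)\-dimensional variables land in \(S_k\). Naturality of operations gives
\[ f(c)=\interp{\coh_{\Gamma,A}}(f\circ\gamma), \]
since \(\interp{t}(\gamma)=\gamma\circ\interp{\classifiertm t}\). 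By the induction hypothesis, \(\interp{x}(f\circ\gamma)=f(\interp{x}(\gamma))\in S'_k\), so \(f(c)\in S'_{k+1}\). Functoriality and the identity \(\forgetmarking\circ\freemarking{(-)}=\id\) on morphisms are then immediate.

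For the adjunction I would show that, for a marked \((Y,tY)\), the identity map
\[ \markedomegacat(\freemarking X,(Y,tY))\to\omegacat(X,Y) \]
is a bijection, natural in both variables. It is injective because marked morphisms are in particular functors. For surjectivity, take any \(f\colon X\to Y\). The set \(f^{-1}(tY)\) is a saturated marking on \(X\): if the top variables of \(\gamma\) map into \(f^{-1}(tY)\), then by the same naturality \(f(\interp{\coh_{\Gamma,A}}(\gamma))=\interp{\coh_{\Gamma,A}}(f\gamma)\) is marked by saturation of \(tY\). Minimality of \(\Sat(\emptyset)\) then gives \(\Sat(\emptyset)\subseteq f^{-1}(tY)\), so \(f\) preserves markings. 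Naturality holds because the bijection is the identity on underlying functors.

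The main obstacle is justifying minimality of \(\Sat(S)\), which the paper states only briefly. I would supply the one-line induction that any saturated \(T\supseteq S\) contains every \(S_k\). I would also check that the union \(\Sat(S)\) is itself saturated. This uses finitarity: \(\Gamma\) is a finite context, so the finitely many top variables of any \(\gamma\) with images in \(\Sat(S)\) all lie in a single stage \(S_k\), and hence \(\interp{\coh_{\Gamma,A}}(\gamma)\in S_{k+1}\). Everything else is formal.
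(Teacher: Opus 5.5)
Your proposal is correct and matches the paper's proof: the same definition \((X,\Sat(\emptyset))\), the same induction over the saturation stages to show that \(f\) preserves markings, and the same appeal to minimality of the saturation. The only difference is that you present the adjunction as a hom-set bijection, using that \(f^{-1}(tY)\) is a saturated marking, where the paper uses identity unit and counit; this is immaterial, and your explicit check that \(\Sat(S)\) is saturated and minimal usefully fills in what the paper only asserts.
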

\begin{proof}
  We define \(\freemarking{X} = (X, \Sat(\emptyset))\) for every
  \(\omega\)\-category \(X\), and we define
  \(\freemarking{f} = f\colon \freemarking{X}\to \freemarking{Y}\) for every
  morphism of \(\omega\)\-categories \(f\colon X\to Y\). We can show by
  induction on the definition of saturation and by functoriality of \(f\) that
  \(\freemarking{f}\) is marking-preserving. Moreover, it satisfies
  \(\forgetmarking\circ\ \freemarking{(-)}=\id\) by definition. We take as unit
  of this adjunction the identity strict \(\omega\)\-functor
  \(X\to \forgetmarking\freemarking{X}\) and as counit
  \(\freemarking{(\forgetmarking X)} \to X\) the identity of
  \(\forgetmarking X\). The counit preserves the marking by minimality of the
  saturation. The triangle identities are both satisfied, since both the unit
  and counit are given by identity functors.
\end{proof}

\begin{proposition}\label{prop:marked-cocomplete}
  The category of marked \(\omega\)-categories is complete and cocomplete.
\end{proposition}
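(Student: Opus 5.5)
We construct limits and colimits explicitly: compute the underlying \(\omega\)\-category in \(\omegacat\), which is complete and cocomplete since it is locally finitely presentable (\Cref{subsec:clans}), and then equip it with a suitable marking. Throughout we use that a morphism of models is determined by its action on cells, and that \(\Sat(S)\) is the least marking containing a set \(S\) of cells.

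\emph{Limits.} Let \(D\colon I\to\markedomegacat\) be a diagram and let \(L=\lim_i \forgetmarking D_i\) in \(\omegacat\), with projections \(p_i\). Mark a cell \(c\) of \(L\) exactly when \(p_i(c)\) is marked in \(D_i\) for every \(i\). We first check saturation. Take \(\gamma\colon\interp{\Gamma}\to L\) and a full type \(A\) of dimension \(n\) with \(\interp{x}(\gamma)\) marked for all \(x\in\Var_{n+1}(\Gamma)\). Then \(p_i\circ\gamma\) sends these variables to marked cells. By saturation of \(D_i\), the cell \(\interp{\coh_{\Gamma,A}}(p_i\gamma)\) is marked. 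Since \(\interp{\coh_{\Gamma,A}}(p_i\gamma)=p_i(\interp{\coh_{\Gamma,A}}(\gamma))\), the cell \(\interp{\coh_{\Gamma,A}}(\gamma)\) is marked in \(L\). By construction each \(p_i\) preserves markings. For the universal property, take a cone \(q_i\colon Y\to D_i\) of marking-preserving maps and let \(u\colon Y\to L\) be the induced map in \(\omegacat\). If \(c\) is marked in \(Y\), then \(p_i u(c)=q_i(c)\) is marked for every \(i\), so \(u(c)\) is marked. Hence \(u\) preserves markings, and it is unique because it is unique in \(\omegacat\).

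\emph{Colimits.} Let \(C=\colim_i\forgetmarking D_i\) with injections \(\iota_i\). Give \(C\) the marking \(\Sat\bigl(\bigcup_i\iota_i(tD_i)\bigr)\), which is the least marking making every \(\iota_i\) marking-preserving. Take a cocone \(q_i\colon D_i\to Y\) of marking-preserving maps with induced map \(u\colon C\to Y\). We must show that \(u\) preserves markings, and this is the step needing care. The set \(\{c : u(c)\in tY\}\) contains every \(\iota_i(tD_i)\). It is also closed under the saturation condition: whenever \(\gamma\) sends the top variables into this set, \(u\gamma\) sends them into \(tY\), so \(u(\interp{\coh_{\Gamma,A}}(\gamma))=\interp{\coh_{\Gamma,A}}(u\gamma)\) is marked because \(Y\) is saturated. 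An induction on the stages \(S_k\) of the saturation then shows that \(\Sat\bigl(\bigcup_i\iota_i(tD_i)\bigr)\) lies in this set. Uniqueness of \(u\) follows from uniqueness in \(\omegacat\).

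The only genuinely delicate point is the naturality fact \(f\circ\interp{\coh_{\Gamma,A}}(\gamma)=\interp{\coh_{\Gamma,A}}(f\gamma)\) for morphisms \(f\) of models. This is immediate from the definition \(\interp{t}(\gamma)=\gamma\circ\interp{\classifiertm{t}}\), since \(\interp{t}(f\gamma)=f\gamma\circ\interp{\classifiertm{t}}=f\bigl(\interp{t}(\gamma)\bigr)\).
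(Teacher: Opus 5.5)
Your proof is correct and follows essentially the paper's route. In both, a (co)limit is computed on the underlying \(\omega\)-categories; a limit marks the cells all of whose projections are marked, and a colimit receives the saturation of the images of the markings. The differences are minor. The paper treats filtered colimits separately, since their marking needs no saturation, and checks saturation via the pointwise computation of (co)limits. You instead handle all colimits uniformly and verify both saturation and the universal property directly from naturality of operations, which the paper leaves implicit.
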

\begin{proof}
  Limits and filtered colimits of marked \(\omega\)\-categories are computed by
  forming the limits and filtered colimits of the underlying
  \(\omega\)\-categories and sets of marked cells. These classes of marked cells
  can be shown to be saturated using that limits and filtered colimits of
  \(\omega\)\-categories are computed point-wise in the category of functors
  \(\Syn{\catt}\to \Set\). General colimits are computed by forming the colimit
  of the underlying \(\omega\)\-categories and the saturation of the colimit of
  the marked cells.
\end{proof}

\citet{cheng2007omega} shows that an \(\omega\)\-category where every cell has a
dual must be an \(\omega\)\-groupoid. Hence, that would force any
definition of \(\omega\)\-category of cobordisms to produce an
\(\omega\)\-groupoid. This shortcoming may be resolved by working with marked
\(\omega\)\-categories, which provide an alternative notion of invertibility,
as discussed by \citet{loubatonInductiveModelStructure2023}.

\begin{definition}\label{def:marked-inv}
  A cell \(f\colon x\to y\) is called \emph{invertible up to marking} when there
  exist inverses:
  \begin{align*}
    f^{\leftinv}&\ty y \to x &
    f^{\rightinv}&\ty y \to x \\
  \intertext{and a pair of marked cancellators:}
    f^{\leftunit}&\ty \comp{f^{\leftinv}}{f} \to \id y &
    f^{\rightunit}&\ty \comp{f}{f^{\rightinv}} \to \id x
  \end{align*}
\end{definition}

In general, there need not be any relation between marked cells and invertible
cells up to marking. We thus restrict our attention to \emph{fibrant} marked
\(\omega\)\-categories, defined in analogy to the fibrant objects of the
\emph{saturated inductive model structure} of \citet[Theorem~3.31]{
loubatonInductiveModelStructure2023}. Checking whether these are the fibrant
objects for some model structure would probably require constructing a model
structure on ordinary \(\omega\)\-categories as well.

\begin{definition}\label{def:fibrant-marked}
  A marked \(\omega\)\-category is \emph{fibrant} when:
  \begin{itemize}
  \item every marked cell is invertible up to marking,
  \item all inverses of a marked cell are marked,
  \item given a marked cell \(e\ty x\to y\), if \(x\) is marked, so is \(y\),
  \item for every composable triple of cells \(f,g,h\) such that \(\comp{f}{g}\)
    and \(\comp{g}{h}\) are marked, \(f\), \(g\) and \(h\) are marked.
  \end{itemize}
\end{definition}

Every cell that is invertible up to marking is also coinductively invertible
in fibrant marked \(\omega\)-categories. For \catt contexts the converse also
holds:

\begin{proposition}\label{prop:fibrant-computads}
  A cell in \(\freemarking{\interp{\Gamma}}\) is marked if and
  only if it is invertible up to marking if and only if it is coinductively
  invertible. Hence, \(\freemarking{\interp{\Gamma}}\) is fibrant.
\end{proposition}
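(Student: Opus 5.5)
We would prove the cycle of implications
\[
\text{marked} \;\Rightarrow\; \text{invertible up to marking} \;\Rightarrow\; \text{coinductively invertible} \;\Rightarrow\; \text{marked},
\]
for cells of \(\freemarking{\interp{\Gamma}}\), where \(\Gamma\) is a \catt context. By \Cref{coro:minterp-restriction} and the Yoneda lemma, cells of \(\interp{\Gamma}\) are terms \(\typing{\Gamma}{t}[A]\) of \catt. The marking is \(\Sat(\emptyset)\), which we describe by induction on the stage \(S_k\).

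\emph{Marked \(\Rightarrow\) coinductively invertible.} This is an induction on \(k\): every element of \(S_k\) is coinductively invertible. The set \(S_0=\emptyset\) is trivial. A new element of \(S_{k+1}\) has the form \(\interp{\coh_{\Delta,A}}(\delta)\) with all \((n+1)\)-dimensional variables sent into \(S_k\). By induction those variables are invertible, so \Cref{thm:invertibility-catt} makes the composite invertible.

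\emph{Coinductively invertible \(\Rightarrow\) marked.} Here we use \Cref{prop:invertible-cells-catt-context}: an invertible term \(t\) of dimension \(n\) has the form \(\coh_{\Delta,A}[\delta]\) with \(x[\delta]\) invertible for each top-dimensional variable \(x\) of \(\Delta\). The natural induction is on the syntactic size or depth of \(t\). The terms \(x[\delta]\) are strictly smaller subterms, so by induction they are marked, and saturation then marks \(t\). The base case is a variable of \(\Gamma\). A variable is never invertible, since it is not of coherence form, so the base case is vacuous. Note that the proposition requires the top-dimensional variables of \(\Delta\) to be of dimension \(n\) (matching "\(\Var_{n+1}\)" up to the indexing convention of the paper), so the indices align with the saturation rule.

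\emph{Invertible up to marking.} Once the two directions above hold, marked coincides with coinductively invertible, and the remaining implications are easy. Marked implies invertible up to marking: take the inverses and cancellators from a coinductive invertibility structure. The cancellators are themselves coinductively invertible, hence marked. Invertible up to marking implies coinductively invertible: the cancellators are marked, hence coinductively invertible, and together with \(f^{\leftinv}, f^{\rightinv}\) they assemble into an invertibility structure by coinduction.

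\emph{Fibrancy.} Each condition of \Cref{def:fibrant-marked} now reduces to a statement about coinductive invertibility.
\begin{itemize}
\item The first condition has just been proved.
\item For the second, the inverses are invertible by \Cref{prop:left-inv}, interpreted in \(\interp{\Gamma}\). This uses semantics of the formal \cattinv proofs; alternatively it follows from the literature results \cite{riceCoinductiveInvertibilityHigher2020,fujiiWeaklyInvertibleCells2023}. Since any inverse appearing in a witness of invertibility up to marking is itself part of some structure, the condition follows.
\item The third condition is \Cref{prop:transport}.
\item The fourth is \Cref{prop:2of6}.
\end{itemize}

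The main obstacle is justifying these transfers: \Cref{prop:left-inv,prop:transport,prop:2of6} are stated in \cattinv, and we need them in the \(\omega\)-category \(\interp{\Gamma}\). We would handle this by applying the argument internally to \(\syntaxincl^*\minterp{\Gamma}=\interp{\Gamma}\). Given an invertibility structure on a cell, presented coinductively, we produce a corresponding substitution from an \(\Equiv{n+1}\)-type context. Applying the \(\rec\)/\(\can\)-built terms to it and normalising via \Cref{lemma:nf-in-cattctx} yields the required cells; coinductively, this unfolds into actual invertibility structures. If this turns out too delicate, we would fall back on the known meta-theoretic versions of these closure properties from the cited literature.
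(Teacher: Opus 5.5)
Your proposal is correct and follows the paper's route. The paper's proof is only the one-line reduction to \Cref{prop:invertible-cells-catt-context}; you unpack it by induction on the saturation stage (via \Cref{thm:invertibility-catt}) and on term size, and you also make explicit the closure properties the fibrancy clauses need, which the paper leaves implicit. To complete your transfer step: the \(\Inv\)-terms in context \(\Gamma\) come from \Cref{prop:invertible-cells-catt-context} and \(\can\) by induction on size, and for the second fibrancy clause you should use \(\coind(t,t_l,t_r,t_{lu},t_{ru},e_{lu},e_{ru})\) built from the given inverse and cancellators (with \(e_{lu},e_{ru}\) such \(\can\)-terms), because a bare \(\can(t,\{e_x\})\) only certifies the canonical inverse \(t^{\leftinv}\).
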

\begin{proof}
  This is an immediate corollary of \Cref{prop:invertible-cells-catt-context}.
\end{proof}

\subsection{Models of \texorpdfstring{\cattinv}{ICaTT} and marked
  \texorpdfstring{\(\omega\)}{ω}-categories}

We now provide semantics of \cattinv in marked \(\omega\)\-categories. It takes
the form of a functor \(\loc \ty \Mod(\cattinv) \to \markedomegacat\), landing
in fibrant marked \(\omega\)\-categories. We define
\(\loc X = (\syntaxincl^{*}X , t\loc X)\) to consist of the underlying category
of \(X\), in which a cell \(u\) is marked precisely when it is in the image of
\(X(\equivDisk_{n+1})\ty X(\Equiv{n+1}) \to X(\disk{n+1})\). Under the Yoneda
lemma, \(u\) is marked whenever it factors as:
\[
  \begin{tikzcd}
    \minterp{\disk{n+1}} \ar[r,"u"] \ar[d,"\minterp{\equivDisk_{n+1}}"'] & X \\
    \minterp{\Equiv{n+1}} \ar[ur, dashed]
  \end{tikzcd}
\]
We define \(\loc f = \syntaxincl^{*} f \ty \loc X \to \loc Y\), for a morphism
of models \(f : X \to Y\). Given a marked cell \(u\) of \(\loc X\), the
following commutative diagram shows that the image of \(u\) by \(\loc f\) is
marked:
\[
  \begin{tikzcd}
    \minterp{\disk{n+1}} \ar[r,"u"] \ar[d,"\minterp{\mu_{n+1}}"']
    & X \ar[r,"f"]
    & Y' \\
    \minterp{\Equiv{n+1}} \ar[ur]
  \end{tikzcd}
\]
The following lemma shows that this construction defines a functor
\(\loc \ty \Mod(\cattinv) \to \markedomegacat\).

\begin{lemma}\label{lemma:left-adj-definition}
  The marked \(\omega\)\-category \(\loc X\) is well-defined.
\end{lemma}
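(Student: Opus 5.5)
We must show that the set of marked cells of \(\loc X\), namely the image of \(X(\equivDisk_{n+1})\colon X(\Equiv{n+1})\to X(\disk{n+1})\) in each dimension, satisfies the saturation condition. The underlying \(\omega\)\-category \(\syntaxincl^*X\) is a model of \catt because \(\syntaxincl\) is a morphism of clans, so only this condition needs checking. The plan is to translate everything through the Yoneda lemma and then use the rule \ruleref{rule:can-intro}.

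Since \(X\) preserves the terminal object and pullbacks along display maps, it sends the iterated weakening \(\Equiv{n+1}\to\disk{n+1}\) to a map whose fibre over a cell \(u\in X(\disk{n+1})\) is \(X\) evaluated on the fibre of the display. Concretely, by \Cref{lemma:famillial-representability} and the pullback square \eqref{eq:pb-display-syntactic}, \(u\) is marked exactly when there is an element of ``\(\Inv(u)\)'' in \(X\). The same reasoning applied to arbitrary contexts gives the following. Given \(\gamma\colon\interp{\Gamma}\to\syntaxincl^*X\) with \(\Gamma\) a pasting diagram, \Cref{coro:minterp-restriction} identifies \(\gamma\) with an element of \(X(\Gamma)\), that is, with a morphism \(\minterp{\Gamma}\to X\). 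If every \(x\in\Var_{n+1}(\Gamma)\) is marked, this element lifts along the display map
\[\Gamma^{\Inv}=\Gamma\extctx\{(e_x\ty\Inv(x))\}_{x\in\Var_{n+1}(\Gamma)}\to\Gamma.\]
The lift is built by adjoining the witnesses one at a time and using, at each step, that \(X\) preserves the pullback of \(\equivDisk_{n+1}\) along \(\classifiertm{x}\) (the analogue of \eqref{eq:pb-square-sphereInc}). This yields \(\tilde\gamma\in X(\Gamma^{\Inv})\).

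Inside \(\Gamma^{\Inv}\), the rule \ruleref{rule:can-intro} produces a term
\[\can(\coh_{\Gamma,A}[\id],\set{e_x})\ \text{ of type }\ \Inv(\coh_{\Gamma,A}[\id]).\]
Its classifying substitution \(\classifierequiv{\can(\ldots)}\colon\Gamma^{\Inv}\to\Equiv{n+1}\) composes with \(\equivDisk_{n+1}\) to give \(\classifiertm{\coh_{\Gamma,A}}\), by the last part of \Cref{lemma:famillial-representability}. Applying \(X\) and evaluating at \(\tilde\gamma\), the element \(X(\classifierequiv{\can(\ldots)})(\tilde\gamma)\in X(\Equiv{n+1})\) maps to \(\interp{\coh_{\Gamma,A}}(\gamma)\), so this cell is marked.

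The main obstacle is the bookkeeping in the lifting step. We need \(X(\Gamma^{\Inv})\to X(\Gamma)\) to be surjective over exactly those \(\gamma\) with marked top variables. This requires checking that each extension by an \(\Inv\) variable is a pullback of \(\equivDisk_{n+1}\) in \(\Syn{\cattinv}\), so that \(X\) turns it into a fibre product. We must also check that the identification \(\syntaxincl^*X(\Gamma)=X(\Gamma)\) is compatible with evaluating \(\interp{x}\) and \(\interp{\coh_{\Gamma,A}}\). Both should follow directly from \Cref{lemma:famillial-representability} and naturality.
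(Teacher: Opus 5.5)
Your proposal is correct and is essentially the paper's proof. The paper also forms the context \(\Gamma'\) obtained by adjoining a variable of type \(\Inv(x)\) for each \(x\in\Var_{n+1}(\Gamma)\); it does this as a single pullback of the product of copies of \(\equivDisk_{n+1}\), whereas you iterate one variable at a time. It then uses that \(X\) preserves pullbacks along display maps to factor \(\gamma\) through \(\minterp{\Gamma'}\), and applies the \(\can\)-intro rule to \(\coh_{\Gamma,A}[i]\) in \(\Gamma'\), whose classifying map into \(\Equiv{n+1}\) shows that \(u\) is marked. One cosmetic point: in the extended context the coherence should be written \(\coh_{\Gamma,A}[i]\), with \(i\colon\Gamma^{\Inv}\to\Gamma\) the weakening, rather than \(\coh_{\Gamma,A}[\id]\).
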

\begin{proof}
  We need to show that its set of marked cells is saturated. This is given by
  the constructor \(\can\): given a cell
  \(u = \interp{\coh_{\Gamma,A}}(\gamma)\) in \(X\) with \(n = \dim A\), and
  such that for every \(x\in\Var_{n+1}(\Gamma)\) the cell \(\interp{x}(\gamma)\)
  is marked. By definition, this gives us, for every such \(x\), a commutative
  diagram
  \[
    \begin{tikzcd}[column sep = huge]
      \minterp{\disk{n+1}}
      \ar[dr,"\interp{x}(\gamma)"{description}]
      \ar[d,"\minterp{\equivDisk_{n+1}}"']
      \ar[r,"\minterp{\classifiertm{x}}"]
      &  \minterp{\Gamma} \ar[d,"\gamma"]\\
      \minterp{\Equiv{n+1}} \ar[r,"e_{x}"']
      & X
    \end{tikzcd}
  \]
  By definition, \(\mu_{n+1}\) is a display map, and those are stable under
  product in every clan, thus we can define the following pullback, which can be
  concretely described as the context obtained from \(\Gamma\) by adjoining for
  every variable \(x\) of dimension \(n+1\), a variable \(\bar{e}_{x}\) of type
  \(\Inv(x)\):
  \[
    \begin{tikzcd}
      \Gamma' \ar[r,dashed,"(\classifierequiv{\bar{e}_{x}})"]
      \ar[d,dashed,"i"', -{Triangle[open]}]
      \ar[dr,phantom,"\lrcorner"{very near start}]
      & \prod\limits_{x\in\Var_{n+1}(\Gamma)}\Equiv{n+1}
        \ar[d,"(\equivDisk_{n+1})", -{Triangle[open]}] \\
      \Gamma \ar[r,"(\classifiertm{x})"']
      & \prod\limits_{x\in\Var_{n+1}(\Gamma)} \disk{n+1}
    \end{tikzcd}
  \]
  Noticing that \(\minterp{-}\) sends pullbacks along display maps to pushouts,
  the above family of commutative diagrams precisely shows that \(\gamma\)
  factors through the pushout:
  \[
    \begin{tikzcd}[column sep = huge]
      \prod\minterp{\disk{n+1}}
      \ar[d,"\minterp{\equivDisk_{n+1}}"']
      \ar[r,"\minterp{\classifiertm{x}}"]
      \ar[dr,phantom,"\ulcorner"{very near end}]
      &  \minterp{\Gamma} \ar[ddr,"\gamma", bend left]\ar[d,"\minterp{i}"]\\
      \prod \minterp{\Equiv{n+1}} \ar[rrd,"e_{x}"', bend right]\ar[r,"\minterp{\classifierequiv{\bar{e}_{x}}}"']
      & \minterp{\Gamma'}\ar[dr,"\bar{\gamma}",dashed] \\
      & & X
    \end{tikzcd}
  \]
  We define the term \(t\) as
  \(\typing{\Gamma'}{\can(\coh_{\Gamma,A}[i]\{\bar{e}_{x}\})}[\Inv(\coh_{\Gamma,A}[i])]\),
  which lets us construct the following commutative diagram showing that \(u\)
  is marked:
  \begin{align*}
    \begin{tikzcd}[column sep = large, ampersand replacement=\&]
      \minterp{\disk{n+1}}
      \ar[r,"\minterp{\classifiertm{\coh_{\Gamma,A}[\id]}}"']
      \ar[d,"\minterp{\equivDisk_{n+1}}"']
      \ar[rrr,bend left=20, "u"]
      \& \minterp{\Gamma} \ar[r, "\minterp{i}"']
      \& \minterp{\Gamma'} \ar[r, "\bar \gamma"']
      \& X\\
      \minterp{\Equiv{n+1}}
      \ar[urr,"\minterp{\classifierequiv{t}}"', bend right=20]
    \end{tikzcd}\\[-\baselineskip]
    \tag*{\qedhere}
  \end{align*}
 \end{proof}

 We now show that \(\loc\) lands in fibrant marked \(\omega\)\-categories using
 the internal proofs of \Cref{sec:examples}. Since in fibrant marked
 \(\omega\)\-categories, the marked cells are invertible, this substantiates the
 claim that terms of \(\Inv\) types witness invertibility.

\begin{proposition}
  The marked \(\omega\)\-category \(\loc X\) is fibrant.
\end{proposition}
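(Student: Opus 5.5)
We check the four conditions of \Cref{def:fibrant-marked} one at a time. Each condition will be reduced, via the Yoneda lemma, to producing a term of an \(\Inv\) type in a suitable \cattinv context. By definition a cell \(u\) of \(\loc X\) is marked exactly when the corresponding morphism \(\minterp{\disk{n+1}}\to X\) extends along \(\minterp{\equivDisk_{n+1}}\). Equivalently, \(u\) is marked when it is the image under some \(\minterp{\Equiv{n+1}}\to X\) of the variable \(\dvar_{n+1}\).

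To show that some cell built from marked cells is marked, we proceed in three steps. First, we write down the universal \cattinv context \(\Theta\) that contains the relevant cells together with variables of \(\Inv\) type witnessing the hypotheses. Second, we observe that the hypotheses give a morphism \(\minterp{\Theta}\to X\). For this we use that \(\minterp{-}\) sends pullbacks along display maps to pushouts, exactly as in the proof of \Cref{lemma:left-adj-definition}. Third, we exhibit a term \(\typing{\Theta}{r}[\Inv(s)]\), where \(s\) is the term representing the desired cell. The classifying substitution \(\classifierequiv{r}\) from \Cref{lemma:famillial-representability} then factors the cell through \(\minterp{\Equiv{n+1}}\).

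With this recipe, the four conditions are handled as follows.
\begin{itemize}
  \item \textbf{Invertibility up to marking.} Take \(\Theta=\Equiv{n+1}\). The inverses are \(\interp{\leftinv(\evar_{n+1})}\) and \(\interp{\rightinv(\evar_{n+1})}\), and the cancellators are \(\leftunit(\evar_{n+1})\) and \(\rightunit(\evar_{n+1})\). These cancellators are marked because the terms \(\leftunitwitness(\evar_{n+1})\) and \(\rightunitwitness(\evar_{n+1})\) have the required \(\Inv\) types. This case uses only the destructors.
  \item \textbf{Inverses of marked cells are marked.} Here \(\Theta=\Equiv{n+1}\) again, and the witnessing terms are the recursive definitions \verb|linv-inv| and its right-handed analogue from \Cref{prop:left-inv}.
  \item \textbf{Transport.} The context is \(\Theta=\disk{n+1}\) for the cell \(e\ty x\to y\), extended by \((\evar\ty\Inv(e))\) and \((\evar'\ty\Inv(x))\); one can also build it as an iterated pullback of copies of \(\equivDisk\). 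The witnessing term is the construction of \Cref{prop:transport}.
  \item \textbf{2-of-6.} The context is the context of three composable cells extended by \(\Inv(\comp fg)\) and \(\Inv(\comp gh)\), and the witnessing terms are those of \Cref{prop:2of6}.
\end{itemize}

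In every case, suspension of the formal constructions gives the statement in each dimension \(n\).

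The main obstacle is the second step: making precise that a family of marked cells in \(X\), compatible on their boundaries, assembles into a morphism out of \(\minterp{\Theta}\). The context \(\Theta\) is a \catt context extended by \(\Inv\)-variables over some of its cells. It is a pullback of a product of maps \(\equivDisk_{k+1}\) along the classifying map into a product of disks, so \(\minterp{\Theta}\) is the corresponding pushout. The individual factorisations \(e_x\) of the marked cells then induce a map out of this pushout, by the same argument as in \Cref{lemma:left-adj-definition}. One must also check that the boundary of each assembled cell is the intended one. This holds because the classifying substitutions commute with \(\sphereInc\) and \(\equivDisk\), again by \Cref{lemma:famillial-representability}.
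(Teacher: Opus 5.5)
Your proposal is correct and follows the paper's proof. The destructors give invertibility up to marking, and precomposing with the classifying maps of (suspensions of) \texttt{linv-inv}, \texttt{rinv-inv}, and the transport and 2-of-6 terms gives the other three conditions. Your pushout argument, which assembles the marked hypotheses into a map out of \(\minterp{\Theta}\) as in the saturation lemma, spells out a step the paper covers only with ``obtained similarly''.
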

\begin{proof}
  Consider a marked cell \(u\) in \(\loc X\). There exists a commutative
  triangle
  \[
    \begin{tikzcd}
      \minterp{\disk{n+1}} \ar[r,"u"] \ar[d,"\minterp{\mu_{n+1}}"'] & X \\
      \minterp{\Equiv{n+1}} \ar[ur, "e"']
    \end{tikzcd}
  \]
  Precomposing \(e\) with the following six morphisms obtained by the
  destructors shows that the cell \(u\) is invertible up to marking:
  \begin{align*}
    \minterp{\classifiertm{\leftinv(\evar_{n+1})}}
    &\ty \! \minterp{\disk{n+1}}\to\minterp{\Equiv{n+1}}
    &\! \minterp{\classifiertm{\rightinv(\evar_{n+1})}}
    & \ty\! \minterp{\disk{n+1}}\to\minterp{\Equiv{n+1}} \\
    \minterp{\classifiertm{\leftunit(\evar_{n+1})}}
    &\ty \! \minterp{\disk{n+2}}\to\minterp{\Equiv{n+1}}
    &\! \minterp{\classifiertm{\rightunit(\evar_{n+1})}}
    &\ty \! \minterp{\disk{n+2}}\to\minterp{\Equiv{n+1}} \\
    \minterp{\classifiertm{\leftunitwitness(\evar_{n+1})}}
    &\ty\! \minterp{\Equiv{n+2}}\to\minterp{\Equiv{n+1}}
    &\! \minterp{\classifierequiv{\rightunitwitness(\evar_{n+1})}}
    &\ty\! \minterp{\Equiv{n+2}}\to\minterp{\Equiv{n+1}}
  \end{align*}
  If we denote the terms constructed in \Cref{prop:left-inv} by:
  \begin{align*}
    \typing{\Equiv{1}}{l}[\Inv(\leftinv(\evar_{1}))]
    && \typing{\Equiv{1}}{r}[\Inv(\rightinv(\evar_{1}))]
  \end{align*}
  then precomposition with the following two morphisms shows that left and right
  inverses of \(u\) are marked:
  \begin{align*}
    \minterp{\classifierequiv{\Susp^{n}l}}
    & \ty \minterp{\Equiv{n+1}}\to\minterp{\Equiv{n+1}} &
    \minterp{\classifierequiv{\Susp^{n}r}}
    & \ty \minterp{\Equiv{n+1}}\to\minterp{\Equiv{n+1}}
  \end{align*}
  The two remaining fibrancy conditions are obtained similarly by precomposition
  with the morphisms classifying the suspensions of the terms constructed in
  \Cref{prop:transport,prop:2of6}.
\end{proof}

\begin{corollary}\label{coro:marked-computad}
  For a context \(\Gamma\) of \catt, we have
  \(\loc\minterp{\Gamma} = \freemarking{\interp{\Gamma}}\)
\end{corollary}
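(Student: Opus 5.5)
The claim has two halves: the underlying \(\omega\)\-categories agree, and the markings agree. For the first, \Cref{coro:minterp-restriction} gives \(\syntaxincl^{*}\minterp{\Gamma} = \interp{\Gamma}\), and by construction \(\loc\minterp{\Gamma}\) has underlying \(\omega\)\-category \(\syntaxincl^*\minterp{\Gamma}\), while \(\freemarking{\interp{\Gamma}}\) has underlying \(\interp{\Gamma}\) by \Cref{prop:free-marked}. So it remains to show that a cell of \(\interp{\Gamma}\) is marked in \(\loc\minterp{\Gamma}\) if and only if it lies in \(\Sat(\emptyset)\).

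The inclusion \(\Sat(\emptyset)\subseteq t\loc\minterp{\Gamma}\) is immediate from minimality. By \Cref{lemma:left-adj-definition}, the marking of \(\loc\minterp{\Gamma}\) is saturated, and the empty set is contained in it. Alternatively, the counit of the adjunction of \Cref{prop:free-marked} gives a marking-preserving identity \(\freemarking{\interp{\Gamma}} \to \loc\minterp{\Gamma}\).

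For the converse, take a marked cell \(u\). By definition it is a categorical term \(\typing{\Gamma}{u}[A]\) of dimension \(n+1\) in \cattinv such that some term \(\typing{\Gamma}{e}[\Inv(u)]\) exists, via \Cref{lemma:famillial-representability} applied to \(\Equiv{n+1}\). By \Cref{lemma:nf-in-cattctx}, \(u\) may be taken to be a \catt term. The plan is to show that \(u\) is coinductively invertible in \(\interp{\Gamma}\), and then conclude using \Cref{prop:fibrant-computads}: in \(\freemarking{\interp{\Gamma}}\), coinductively invertible cells are exactly the marked ones.

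To build the invertibility structure, apply the six destructors to \(e\). This gives \(\leftinv(e)\), \(\rightinv(e)\), \(\leftunit(e)\), \(\rightunit(e)\) together with new \(\Inv\)\-terms \(\leftunitwitness(e)\) and \(\rightunitwitness(e)\) on the cancellators. Each of these categorical terms normalises, by \Cref{lemma:nf-in-cattctx}, to a \catt term in context \(\Gamma\). The normal forms have the right types because normalisation respects judgemental equality and \catt types are already normal. Iterating the process on the witnesses gives, corecursively, an invertibility structure on \(u\) inside \(\interp{\Gamma}\) in the sense of \Cref{def:invertibility-structure}. Equivalently, the data is exactly the marking of \(\loc\minterp{\Gamma}\) being closed under the destructors, so every marked cell is invertible in the underlying \(\omega\)\-category.

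The main obstacle is this step. It has to be a genuine coinductive construction, and one must check that normalising the components commutes with forming the types of later components. For example, the type of \(\leftunit(e)\) mentions \(\leftinv(e)\), so it must be shown to become the type built from \(\nf(\leftinv(e))\). This should follow from \(\nf\) choosing representatives of \(\beta\eta\)\-classes compatibly with substitution into the \catt composites \(\comp{-}{-}\) and \(\id\). Once every marked cell is coinductively invertible, \Cref{prop:invertible-cells-catt-context} together with \Cref{prop:fibrant-computads} places it in \(\Sat(\emptyset)\), and the markings agree.
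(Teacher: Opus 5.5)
Your proposal is correct and follows essentially the paper's argument. The underlying \(\omega\)-categories agree by \Cref{coro:minterp-restriction}, and minimality gives one inclusion of markings. For the other inclusion you show that marked cells are coinductively invertible and then apply \Cref{prop:fibrant-computads}; the paper gets the same fact by citing fibrancy of \(\loc X\), whose first clause is proved exactly by precomposing with the destructors, as you do.

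The normalisation issue you flag as the main obstacle is not really one. Build the invertibility structure from \(\leftinv(e),\dots,\rightunitwitness(e)\) directly in \(\syntaxincl^{*}\minterp{\Gamma}\), where these terms have the right types judgementally. Then transport it along the isomorphism of models \(\syntaxincl^{*}\minterp{\Gamma}\cong\interp{\Gamma}\), which automatically commutes with \(\comp{-}{-}\) and \(\id\).
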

\begin{proof}
  \Cref{coro:minterp-restriction} shows that the underlying
  \(\omega\)-categories of \(\loc\minterp{\Gamma}\) and of
  \(\freemarking{\interp{\Gamma}}\) coincide. By fibrancy, every marked cell in
  \(\loc\minterp{\Gamma}\) is coinductively invertible, hence it is marked in
  \(\freemarking{\interp{\Gamma}}\) by \Cref{prop:fibrant-computads}. Since
  \(\freemarking{(-)}\) is minimally marked, the two markings must coincide.
\end{proof}

For his duality theorem, \citet[Theorem~3.3(ii)]{jonasDualityClans2025}
identifies cocontinuous functors \(\Mod(\mathbb{T}) \to C\) to a cocomplete
category \(C\) with morphisms \(\Syn{\mathbb{T}}^{\op} \to C\) preserving
initial objects and pushouts along display maps. This leads us to conjecture:
\begin{conjecture}
  The functor \(\loc\) admits a right adjoint, given by:
  \begin{align*}
    \radj \ty \markedomegacat &\to \Mod(\cattinv) \\
    (\radj X)(\Gamma) &= \markedomegacat(\loc\minterp{\Gamma},X)
  \end{align*}
\end{conjecture}
\noindent Proving this conjecture would require showing that
\(\loc\circ\minterp{-}\) preserves pushouts along display maps, and identifying
\(\loc\) with the Yoneda extension of its restriction \(\loc \circ\minterp{-}\).
Equivalently this amounts to \(\loc\) being cocontinous.


{
\balance
\bibliographystyle{ACM-Reference-Format}
\bibliography{bibliography}
}
\clearpage

\appendix

\section{Rules of \catt}
\label{app:catt}

In this appendix, we give a full account of the rules of \catt. We give all the
rules with named variables for the sake of readability, but a name-free
presentation is also possible, and it is commonly achieved with de Bruijn
indices.
\begin{mathpar}
  \inferrule{
    \null
  }{
    \ctxty{\emptycontext}
  }\and
  \inferrule*[vcenter,right = {\((x\notin \Var\Gamma)\)}]{
    \typing{\Gamma}{A}
  }{
    \ctxty{\Gamma\extctx (x \ty A)}
  } \and
  \inferrule*[vcenter,right = {\((x\ty A)\in \Gamma\)}]{
    \ctxty{\Gamma}
  }{
    \typing{\Gamma}{x}[A]
  } \\
  \inferrule{
    \ctxty{\Gamma}
  }{
    \typing{\Gamma}{\emptysub}[\emptycontext]
  }\and
  \inferrule{
    \typing{\Delta}{\gamma}[\Gamma]\\
    \typing{\Gamma}{A} \\
    \typing{\Delta}{t}[A[\gamma]]
  }{
    \typing{\Delta}{\gamma\extsub (x\mapsto t)}[\Gamma\extctx (x\ty A)]
  }
  \and
    \inferdef[\(\obj\)-intro]{\ctxty{\Gamma}}{\typing{\Gamma}{\obj}}
  \and
  \inferdef[\(\to\)-intro\textsubscript{0}]{
    \typing{\Gamma}{u}[\obj] \\
    \typing{\Gamma}{v}[\obj] }
  {\typing{\Gamma}{\arr[\obj]{u}{v}}}
  \and
  \inferdef[\(\to\)-intro\textsubscript{+}]{
    \typing{\Gamma}{u}[\arr[A]{v}{w}] \\
    \typing{\Gamma}{u'}[\arr[A]{v}{w}] }
  {\typing{\Gamma}{\arr[{\arr[A]{v}{w}}]{u}{u'}}}
  \and
    \inferdef[PSS]
  { }
  {\pstyping{(x\ty\obj)}{x}[\obj]}
  \and
  \inferdef[PSD]
  {\pstyping{\Gamma}{f}[\arr[A]{x}{y}]}
  {\pstyping{\Gamma}{y}[A]}
  \and
  \inferdef[PSE]
  {\pstyping{\Gamma}{x}[A]}
  {\pstyping{\Gamma\extctx(y\ty A)\extctx(f\ty \arr[A]{x}{y})}{f}[{\arr[A]{x}{y}}]}
  \and
  \inferdef[PS]
  {\pstyping{\Gamma}{x}[\obj]}
  {\psctxty{\Gamma}}
  \\
  \inferdef[\(\coh\)-intro]{
    \psctxty{\Gamma} \\
    \fulltyping{\Gamma}{A} \\
    \typing{\Delta}{\gamma}[\Gamma]
  }{
    \typing{\Delta}{\coh_{\Gamma,A}[\gamma]}[A[\gamma]]
  }
\end{mathpar}
  \begin{figure*}
  \centering
  \begin{mathpar}
    \begin{tikzcd}[column sep = 3cm]
	{\interp{x}[\gamma]} & {\interp{y}[\gamma]} &[-1.5cm] {\interp{z}[\gamma]} &[-1.5cm] & &[-1.5cm] \\[-.5cm]
	& \ast & {} & {\interp{x}[\gamma]} & {\interp{y}[\gamma]} & {\interp{z}[\gamma]} \\[-.5cm]
	{\interp{x}[\gamma]} & {\interp{y}[\gamma]} & {\interp{z}[\gamma]} & & {} \\
	& \\
	& & & & {} \\[1cm]
	{\interp{x}[\gamma]} & & {\interp{z}[\gamma]} &
	{\interp{x}[\gamma]} & {\interp{y}[\gamma]} & {\interp{z}[\gamma]}
	\arrow[""{name=0, anchor=center, inner sep=0}, "{\interp{h}[\gamma]}", curve={height=-50pt}, from=1-1, to=1-2]
	\arrow[""{name=1, anchor=center, inner sep=0}, "{\interp{f}[\gamma]}"', from=1-1, to=1-2]
	\arrow[""{name=2, anchor=center, inner sep=0}, "{\interp{g}[\gamma]}"{description}, shift left=2, curve={height=-12pt}, from=1-1, to=1-2]
	\arrow["{\interp{k}[\gamma]}", from=1-2, to=1-3]
	\arrow[""{name=3, anchor=center, inner sep=0}, "{\interp{f}[\gamma]}", from=3-1, to=3-2]
	\arrow[""{name=4, anchor=center, inner sep=0}, "{\interp{h}[\gamma]}"', curve={height=50pt}, from=3-1, to=3-2]
	\arrow[""{name=5, anchor=center, inner sep=0}, "{\interp{g}[\gamma]}"{description}, shift right=2, curve={height=12pt}, from=3-1, to=3-2]
	\arrow["{\interp{k}[\gamma]}", from=3-2, to=3-3]
	\arrow[Rightarrow, scaling nfold=3, from=2-3, to=2-4, between={0.4}{0.9} ]
	\arrow[""{name=6, anchor=center, inner sep=0}, "{\interp{h}[\gamma]}", curve={height=-30pt}, from=2-4, to=2-5]
	\arrow[""{name=7, anchor=center, inner sep=0}, "{\interp{h}[\gamma]}"', curve={height=30pt}, from=2-4, to=2-5]
	\arrow["{\interp{k}[\gamma]}", from=2-5, to=2-6]
	\arrow[Rightarrow, scaling nfold=3, from=3-5, to=6-5, between={0.3}{0.7}]
	\arrow[""{name=8, anchor=center, inner sep=0}, "{\interp{h}[\gamma]}", curve={height=-18pt}, from=6-4, to=6-5]
	\arrow[""{name=9, anchor=center, inner sep=0}, "{\interp{h}[\gamma]}"', curve={height=18pt}, from=6-4, to=6-5]
	\arrow["{\interp{k}[\gamma]}", from=6-5, to=6-6]
	\arrow[Rightarrow, scaling nfold=3, from=6-4, to=6-3, between={0.2}{0.8}]
	\arrow[""{name=10, anchor=center, inner sep=0}, "{\interp{h}[\gamma]}", curve={height=-12pt}, from=6-1, to=6-3]
	\arrow[""{name=11, anchor=center, inner sep=0}, "{\interp{h}[\gamma]}"', curve={height=12pt}, from=6-1, to=6-3]
	\arrow["{a^{\leftinv}}", between={0.2}{0.8}, Rightarrow, from=0, to=2]
	\arrow["{b^{\leftinv}}", between={0.2}{0.8}, Rightarrow, from=2, to=1]
	\arrow["{\interp{a}[\gamma]}", between={0.2}{0.8}, Rightarrow, from=3, to=5]
	\arrow["{\interp{b}[\gamma]}", between={0.2}{0.8}, Rightarrow, from=5, to=4]
	\arrow["{C}"{description}, Rightarrow, between={0.2}{0.8}, from=6, to=7]
	\arrow["{\id(\interp{h}[\gamma])}"{description}, phantom, from=8, to=9]
	\arrow["{\id(\interp{h}[\gamma])}"{description}, phantom, from=10, to=11]
    \end{tikzcd}
  \end{mathpar}
  \caption{Left cancellator of a composite of invertible cells. Here
    \(C = (b^{\leftinv}\ast a^{\leftinv}) \ast (\interp{a}[\gamma] \ast
    \interp{b}[\gamma])\).}
  \label{fig:cancellation-witness}
\end{figure*}

\section{Invertibility in \catt}
\label{app:proof-invert}
This appendix is dedicated to a more detailed presentation of the construction
of \Cref{thm:invertibility-catt} of
\citet{benjaminInvertibleCells$omega$categories2024}, on which the
\(\beta\)-reduction rules for the \(\can\) constructor of \cattinv depend. In
addition to suspension, the construction of inverses relies also on an
additional meta-operation of \catt, defined by
\citet[Section~5]{benjaminHom$omega$categoriesComputad2024}, namely the formation of
\emph{opposites}. Given a positive natural number \(n\in\N_{>0}\),
the operation \(\op_{n}\) swaps the source and target of \(n\)\-dimensional
terms. It is defined recursively on valid contexts, types, terms and
substitutions as follows:
\begin{align*}
  \op_{n}\emptycontext
  &= \emptycontext
  & \op_{n} \Gamma\extctx(x\ty A)
  &= (\op_{n}\Gamma) \extctx (x\ty \op_{n}A) \\
  \op_{n}\obj
  &= \obj
  & \op_{n} (\arr[A]{u}{v})
  &=
    \begin{cases}
      \arr{\op_{n}u}{\op_{n} v}\!&\! (\dim A+2 \neq n) \\
      \arr{\op_{n}v}{\op_{n} u}\!&\! (\dim A+2 = n)
    \end{cases}\\
  \op_{n}x
  &= x
  & \op_{n}\coh_{\Gamma,A}[\gamma]
  &= \coh_{\overline{\op_{n}\Gamma},(\op_{n}A)[
    \op_{n}^{\Gamma,-1}]}[\op_{n}^{\Gamma}\circ\op_{n}\gamma] \\
  \op_{n}\emptysub
  &= \emptysub
  & \op_{n}\gamma\extsub(x\mapsto t)
  &= (\op_{n}\gamma)\extsub(x\mapsto \op_{n}t)
\end{align*}
where \(\typing{\op_{n}\Gamma}{\op_{n}^{\Gamma}}[\overline{\op_n\Gamma}]\) is
the unique isomorphism between \(\op_n\Gamma\) and a pasting diagram. This
meta-operation preserves valid contexts, terms, types and substitutions, and it
preserves pasting diagram and fullness up to unique isomorphism.

This lets us present the construction of \Cref{thm:invertibility-catt} in more
detail. We start with the following intermediate
result of \citet[Lemma~31]{benjaminInvertibleCells$omega$categories2024} that
allows us to construct inverses of composites.

\begin{proposition}
  Consider a pasting diagram \(\Gamma\) of dimension at most \(n\), and an
  \(\omega\)\-functor \(\gamma\ty \interp{\Gamma} \to X\), together with, for
  every \(n\)-dimensional variable \(x \in \Var \Gamma\), an
  invertibility structure
  \((x^{\leftinv},x^{\rightinv},x^{\leftunit},x^{\rightunit},
  x^{\leftunitwitness},x^{\rightunitwitness})\)
  on \(\interp{x}(\gamma)\). Then \(\overline{\op_n\Gamma} = \Gamma\) and
  there exist two \(\omega\)\-functors
  \begin{align*}
    \gamma^{\leftinv} &\ty \interp{\Gamma} \to X
    & \gamma^{\rightinv} & \ty \interp{\Gamma} \to X
  \end{align*}
  that agree with \(\gamma\) on cells of dimension less than \(n\), and they
  send \(n\)\-dimensional variable \(x\in\Var_n(\Gamma)\) to
  \begin{align*}
    \interp{x}(\gamma^{\leftinv})
      &= (x[\op_{n}^{\Gamma}])^{\leftinv} &
    \interp{x}(\gamma^{\rightinv})
      &= (x[\op_{n}^{\Gamma}])^{\rightinv}
  \end{align*}
\end{proposition}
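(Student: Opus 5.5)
The plan splits into two parts: first a purely syntactic claim about pasting diagrams, then the construction of the two functors by exploiting that a morphism out of \(\interp{\Gamma}\) is determined by its values on the variables of \(\Gamma\).

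\emph{Step 1: \(\overline{\op_n\Gamma}=\Gamma\).} Since \(\dim\Gamma\le n\), the \(n\)-dimensional variables of \(\Gamma\) are maximal: none of them is the source or target of another variable. Applying \(\op_n\) swaps the source and target of each such variable and leaves every lower-dimensional type unchanged. I would argue by induction on the derivation of \(\pstyping{\Gamma}{x}[A]\). Each application of \ruleref{rule:pse} that introduces an \(n\)-dimensional pair \((y\ty A)\extctx(f\ty\arr{x}{y})\) is immediately followed by \ruleref{rule:psd}, because nothing can be glued onto \(f\). Swapping the roles of \(x\) and \(y\) in this step gives again a pasting diagram derivation: the \((n-1)\)-cell \(y\) now appears before \(x\), or, after \(\alpha\)-renaming, the same shape. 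The result is a context isomorphic to \(\Gamma\) by a substitution \(\op_n^\Gamma\) that is the identity on variables of dimension \(<n\) and maps each \(n\)-dimensional variable to a variable of reversed type. Uniqueness of isomorphisms between pasting diagrams then identifies \(\overline{\op_n\Gamma}\) with \(\Gamma\) and fixes \(\op_n^\Gamma\). Concretely, \(\op_n\) reverses the order of globes in each \(n\)-dimensional stack \(x\to\cdots\to y\).

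\emph{Step 2: constructing \(\gamma^{\leftinv}\) and \(\gamma^{\rightinv}\).} Because \(\interp{\Gamma}\) is representable, a morphism \(\interp{\Gamma}\to X\) is the same as an element of \(X(\Gamma)\). Since \(X\) preserves the pullbacks \eqref{eq:pb-square-sphereInc}, such an element amounts to a choice of cells for the variables of \(\Gamma\), taken in context order, whose sources and targets match the images of the corresponding types. I would define \(\gamma^{\leftinv}\) by induction along the context \(\Gamma\). On variables of dimension \(<n\) it sends each variable to its image under \(\gamma\). On an \(n\)-dimensional variable \(x\) it sends \(x\) to \((x[\op_n^\Gamma])^{\leftinv}\), where \(x[\op_n^\Gamma]\) is the \(n\)-variable of \(\Gamma\) corresponding to \(x\) under Step 1. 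The key verification is well-typedness. If \(x[\op_n^\Gamma]\colon u\to v\), then \(x\colon v\to u\) in \(\Gamma\), which is what Step 1 guarantees, and \(\interp{x[\op_n^\Gamma]}(\gamma)^{\leftinv}\colon\gamma(v)\to\gamma(u)\) by \Cref{def:invertibility-structure}. Since \(u,v\) have dimension \(n-1\) and are sent to the same cells as under \(\gamma\), the required boundary matches. No further condition arises, because no variable depends on an \(n\)-dimensional one. The case of \(\gamma^{\rightinv}\) is identical with \(\rightinv\) in place of \(\leftinv\). Agreement with \(\gamma\) below dimension \(n\) then holds by construction, since restriction to lower-dimensional cells is computed through the display maps to the lower-dimensional truncation of \(\Gamma\).

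\emph{Main obstacle.} I expect the main difficulty to lie in Step 1, namely making precise that \(\op_n\Gamma\) is again a pasting diagram equal to \(\Gamma\) and that \(\op_n^\Gamma\) acts on variables as described. In particular, the identity on lower variables needs care: reordering globes within a stack changes the context ordering, so the statement is genuinely "up to the unique isomorphism" \(\op_n^\Gamma\). To handle this I would use the known fact that \(\op_n\) preserves pasting diagrams up to unique isomorphism, together with the observation that an isomorphism fixing all \((n-1)\)-dimensional boundaries is forced. This reduces the claim to a comparison of the sets of \(n\)-dimensional variables over each pair of parallel \((n-1)\)-cells. Step 2 is then routine.
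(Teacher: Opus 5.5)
The gap is in your description of \(\op_n^\Gamma\), and it breaks the well-typedness check you call the key verification in Step~2.

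\textbf{Where it fails.} The isomorphism \(\op_n^\Gamma\) is not the identity on variables of dimension \(<n\). If it were, each \(n\)-dimensional \(a\ty\arr{p}{q}\) in \(\Gamma\) would be sent to a variable of \(\op_n\Gamma\) of type \(\arr{p}{q}\), that is, to an \(n\)-cell \(\arr{q}{p}\) of \(\Gamma\). A pasting diagram never contains such a pair.

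What \(\op_n^\Gamma\) actually does is reverse stacks. Let \(x_0\to x_1\to\cdots\to x_k\) be a maximal stack of parallel \((n-1)\)-cells joined by \(n\)-cells \(a_i\ty\arr{x_{i-1}}{x_i}\). Then \(\op_n^\Gamma\) sends \(x_i\mapsto x_{k-i}\) and \(a_i\mapsto a_{k+1-i}\), and it is the identity on cells of dimension \(<n-1\). The paper's own examples show this. In the \(1\)-dimensional one, \(\gamma^{\leftinv}\) is drawn as \(\interp{z}(\gamma)\to\interp{y}(\gamma)\to\interp{x}(\gamma)\). In the \(2\)-dimensional one, \(f\) and \(h\) are exchanged.

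Hence your claim ``if \(x[\op_n^\Gamma]\ty\arr{u}{v}\) then \(x\ty\arr{v}{u}\) in \(\Gamma\)'' is false. From \(x\ty\arr{p}{q}\) you only get \(x[\op_n^\Gamma]\ty\arr{q[\op_n^\Gamma]}{p[\op_n^\Gamma]}\) in \(\Gamma\). So \((x[\op_n^\Gamma])^{\leftinv}\) runs from \(\interp{p[\op_n^\Gamma]}(\gamma)\) to \(\interp{q[\op_n^\Gamma]}(\gamma)\).

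Sending the lower variables to their \(\gamma\)-images therefore does not define a morphism \(\interp{\Gamma}\to X\). In the \(1\)-dimensional example, \(f\) would go to \(g^{\leftinv}\ty\interp{z}(\gamma)\to\interp{y}(\gamma)\) while its source \(x\) goes to \(\interp{x}(\gamma)\). So when \(\dim\Gamma=n\), the clause ``agree with \(\gamma\) on cells of dimension less than \(n\)'' cannot in general hold literally alongside the prescribed values on \(\Var_n(\Gamma)\). It has to be read up to transport along \(\op_n^\Gamma\), which is what the paper's pictures depict.

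\textbf{How to repair it.} Build the map on \(\op_n\Gamma\) first. The operation \(\op_n\) leaves the types of variables of dimension \(<n\) unchanged and only swaps the boundary of \(n\)-dimensional ones. So there is a morphism \(\delta\ty\interp{\op_n\Gamma}\to X\) defined as follows:
\begin{itemize}
\item each lower variable \(w\) goes to \(\interp{w}(\gamma)\);
\item each \(y\in\Var_n(\Gamma)\) goes to \(\interp{y}(\gamma)^{\leftinv}\), which has exactly the swapped boundary.
\end{itemize}
Nothing else needs checking, since no variable depends on an \(n\)-dimensional one.

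Then define \(\gamma^{\leftinv}\) by transporting \(\delta\) along \(\op_n^\Gamma\), i.e.\ \(\interp{w}(\gamma^{\leftinv})=\interp{w[\op_n^\Gamma]}(\delta)\) for every variable \(w\) of \(\Gamma=\overline{\op_n\Gamma}\). This gives \((x[\op_n^\Gamma])^{\leftinv}\) on \(x\in\Var_n(\Gamma)\) and \(\interp{w[\op_n^\Gamma]}(\gamma)\) on lower variables. The construction of \(\gamma^{\rightinv}\) is the same.

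For Step~1, replace your ``isomorphism fixing the \((n-1)\)-boundaries'' with the stack-reversing one above. Reversing a stack of \(k\) \(n\)-cells yields again a stack of \(k\) \(n\)-cells over the same lower boundary. In tree terms, \(\op_n\) reverses the children of the nodes of height \(n-1\), and these are all leaves when \(\dim\Gamma\le n\). So \(\overline{\op_n\Gamma}=\Gamma\), and uniqueness of \(\op_n^\Gamma\) follows from rigidity of pasting diagrams.
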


\noindent
We illustrate the construction on a few examples. First, we remark that if
\(\Gamma\) is of dimension strictly less than \(n\), the construction is trivial
as the isomorphism \(\op^{\Gamma}_{n}\) is the identity, and we have
\(\gamma^{\leftinv} = \gamma^{\rightinv} = \gamma\). Consider the following
\(1\)-dimensional pasting diagram
\[
  \Gamma =
  \begin{tikzcd}
    x \ar[r, "f"]
    & y \ar[r, "g"]
    & z
  \end{tikzcd}
\]
together with a morphism \(\gamma \ty \interp{\Gamma}\to X\) and invertibility
structures on the images of \(f\) and \(g\). he substitutions
\(\gamma^{\leftinv}\) and \(\gamma^{\rightinv}\) can be represented graphically
as follows:
\begin{align*}
  \gamma^{\leftinv} &=
  \begin{tikzcd}[ampersand replacement=\&, column sep=large]
    \interp{z}(\gamma) \ar[r, "{g^{\leftinv}}"]
    \& \interp{y}(\gamma) \ar[r, "{f^{\leftinv}}"]
    \& \interp{x}(\gamma)
  \end{tikzcd}
  \\
  \gamma^{\rightinv} &=
  \begin{tikzcd}[ampersand replacement=\&, column sep=large]
    \interp{z}(\gamma) \ar[r, "{g^{\rightinv}}"]
    \& \interp{y}(\gamma) \ar[r, "{f^{\rightinv}}"]
    \& \interp{x}(\gamma)
  \end{tikzcd}
\end{align*}
Considering now the \(2\)-dimensional pasting diagram
\[
  \Gamma =
  \begin{tikzcd}
	x & y & z
	\arrow[""{name=0, anchor=center, inner sep=0}, "f", shift left, curve={height=-12pt}, from=1-1, to=1-2]
	\arrow[""{name=1, anchor=center, inner sep=0}, "h"', shift right, curve={height=12pt}, from=1-1, to=1-2]
	\arrow[""{name=2, anchor=center, inner sep=0}, "g"{description}, from=1-1, to=1-2]
	\arrow["k", from=1-2, to=1-3]
	\arrow["a", between={0.2}{0.8}, Rightarrow, from=0, to=2]
	\arrow["b", between={0.2}{0.8}, Rightarrow, from=2, to=1]
\end{tikzcd}
\]
together with a morphism \(\gamma \ty \interp{\Gamma}\to X\) and invertibility
structures on the images of \(a\) and \(b\). The substitutions
\(\gamma^{\leftinv}\) and \(\gamma^{\rightinv}\) can be represented graphically
as follows:
\begin{align*}
  \gamma^{\leftinv} &=
  \begin{tikzcd}[ampersand replacement = \&, column sep = large]
	\interp{x}(\gamma) \& \interp{y}(\gamma) \& \interp{z}(\gamma)
	\arrow[""{name=0, anchor=center, inner sep=0}, "\interp{h}(\gamma)", shift left, curve={height=-30pt}, from=1-1, to=1-2]
	\arrow[""{name=1, anchor=center, inner sep=0}, "\interp{f}(\gamma)"', shift right, curve={height=30pt}, from=1-1, to=1-2]
	\arrow[""{name=2, anchor=center, inner sep=0}, "\interp{g}(\gamma)"{description}, from=1-1, to=1-2]
	\arrow["\interp{k}(\gamma)", from=1-2, to=1-3]
	\arrow["{b^{\leftinv}}", between={0.2}{0.8}, Rightarrow, from=0, to=2]
	\arrow["{a^{\leftinv}}", between={0.2}{0.8}, Rightarrow, from=2, to=1]
  \end{tikzcd} \\[\baselineskip]
    \gamma^{\rightinv} &=
  \begin{tikzcd}[ampersand replacement = \&, column sep = large]
	\interp{x}(\gamma) \& \interp{y}(\gamma) \& \interp{z}(\gamma)
	\arrow[""{name=0, anchor=center, inner sep=0}, "\interp{h}(\gamma)", shift left, curve={height=-30pt}, from=1-1, to=1-2]
	\arrow[""{name=1, anchor=center, inner sep=0}, "\interp{f}(\gamma)"', shift right, curve={height=30pt}, from=1-1, to=1-2]
	\arrow[""{name=2, anchor=center, inner sep=0}, "\interp{g}(\gamma)"{description}, from=1-1, to=1-2]
	\arrow["\interp{k}(\gamma)", from=1-2, to=1-3]
	\arrow["{b^{\rightinv}}", between={0.2}{0.8}, Rightarrow, from=0, to=2]
	\arrow["{a^{\rightinv}}", between={0.2}{0.8}, Rightarrow, from=2, to=1]
  \end{tikzcd} \\
\end{align*}
In general, this construction replaces the \(n\)\-dimensional arguments by their
left of right inverses, and reverses the order of the compositions. This lets
us present the construction that proves the theorem on which \cattinv relies:

\thminvertibility*
\begin{proof}[Proof sketch]
  This theorem is proven by coinduction, providing an explicit left and right
  inverses, and left and right units. By construction, the left and right unit
  satisfy the hypothesis of the theorem, allowing us to produce an invertibility
  structure on them. Denote
  \(t = \interp{\coh_{\Gamma,\arr{u}{v}}}(\gamma)\) the cell on which we must
  construct the invertibility structure. We first define the left and right
  inverses as follows:
  \begin{align*}
    t^{\leftinv} &= \interp{\op_{n+1} \coh_{\Gamma,A}}(\gamma^{\leftinv}) &
    t^{\rightinv} &= \interp{\op_{n+1} \coh_{\Gamma,A}}(\gamma^{\rightinv})
  \end{align*}
  The left cancellation witness is then defined in three steps. The
  first step is an associator that reassociates \(t^{\leftinv}\ast t\) in order
  to group together chains of \(n\)-cells with their corresponding inverses.
  The second step consists in cancelling these chains of cells into identities,
  and the final step consists in a generalised unitor that relates a composite
  of identities to an identity. We
  illustrate the three steps of the left cancellation witness in
  \Cref{fig:cancellation-witness}. The case of the right cancellation
  witness is symmetric.
\end{proof}

\section{Rules of \texorpdfstring{\cattinv}{CaTT\_Inv}}
\label{app:cattinv}

The aim of this section is to collect and present all the rules of \cattinv at
the same place. First, we collect the rules on the action of substitutions on
types and terms in the raw syntax, as well as the action of the suspension on
the raw syntax.
\begin{mathpar}
    \Inv_{A}(u)[\sigma] = \Inv_{A[\sigma]}(u[\sigma])
    \and
    \can(t,\{e_{x}\})[\sigma] = \can(t[\sigma],\{e_{x}[\sigma]\})
    \and
    \begin{aligned}
      \coind(t,t_{l},t_{r},&t_{lu},t_{ru},t_{ilu},t_{iru})[\sigma] = \\[-\baselineskip]
      &\coind(t[\sigma],t_{l}[\sigma],t_{r}[\sigma],t_{lu}[\sigma],t_{ru}[\sigma],t_{ilu}[\sigma],t_iru[\sigma])
    \end{aligned}
    \and
    \begin{aligned}
    \rec(t,t_{l},t_{r},&t_{lu},t_{ru},t_{ilu},t_{iru},\gamma)[\sigma] = \\[-\baselineskip]
      &\rec(t,t_{l},t_{r},t_{lu},t_{ru},t_{ilu},t_{iru},\gamma\circ\,\sigma)
    \end{aligned}
    \and
    \leftinv(t)[\sigma] = \leftinv(t[\sigma])
    \and
    \rightinv(t)[\sigma] = \rightinv(t[\sigma])
    \and
    \leftunit(t)[\sigma] = \leftunit(t[\sigma])
    \and
    \rightunit(t)[\sigma] = \rightunit(t[\sigma])
    \and
    \leftunitwitness(t)[\sigma] = \leftunitwitness(t[\sigma])
    \and
    \rightunitwitness(t)[\sigma] = \rightunitwitness(t[\sigma])
    \\
    \\
    \Susp(\Inv_{A}(t)) = \Inv_{\Susp A}(\Susp t)
    \and
    \Susp(\can(t,\{e_{x}\})) = \can(\Susp t, \{\Susp e_{x}\})
    \and
    \begin{aligned}
    \Susp\coind(t,t_{l},t_{r},&t_{lu},t_{ru},t_{ilu},t_{iru}) = \\[-\baselineskip]
      &\coind(\Susp t,\Susp t_{l}, \Susp t_{r}, \Susp t_{lu}, \Susp
      t_{ru} , \Susp t_{ilu}, \Susp t_{iru})
    \end{aligned}
    \and
    \begin{aligned}
    \Susp\rec(t,t_{l},t_{r},&t_{lu},t_{ru},t_{ilu},t_{iru},\gamma) = \\[-\baselineskip]
    &\rec(\Susp t,\Susp t_{l}, \Susp t_{r}, \Susp t_{lu},\Susp
      t_{ru} , \Susp t_{ilu}, \Susp t_{iru}, \Susp\gamma)
    \end{aligned}
    \and
    \Susp(\leftinv(e))
    = \leftinv(\Susp e)
    \and
    \Susp(\rightinv(e))
    = \rightinv(\Susp e)
    \and
    \Susp(\leftunit(e))
    = \leftunit(\Susp e)
    \and
    \Susp(\rightunit(e))
    = \rightunit(\Susp e)
    \and
    \Susp(\leftunitwitness(e))
    = \leftunitwitness(\Susp e)
    \and
    \Susp(\rightunitwitness(e))
    = \rightunitwitness(\Susp e)
    \\
\end{mathpar}

\noindent
Finally, \Cref{fig:rules-cattinv} collects the rules of \cattinv regarding the
new type and term constructors. These rules are to be taken in congjunction to
these of \Cref{app:catt}.

\begin{figure*}
  \centering

  \begin{subfigure}{\linewidth}
    \begin{mathpar}
      \inferdef[\(\Inv\)-intro]{
        \typing{\Gamma}{t}[\arr[A]{u}{v}]
      }{
        \typing{\Gamma}{\Inv_{\arr[A]{u}{v}}(t)}
      }
    \end{mathpar}
    \caption{Introduction of the type of invertibility structures}
  \end{subfigure}

  \begin{subfigure}{\linewidth}
    \begin{mathpar}
    \inferrule{
  \typing{\Gamma}{t}[\arr[A]{u}{v}]\\
  \typing{\Gamma}{e}[\Inv(t)] } { \typing{\Gamma}{\leftinv(e)}[\arr[A]{v}{u}] }
\and \inferrule{
  \typing{\Gamma}{t}[\arr[A]{u}{v}]\\
  \typing{\Gamma}{e}[\Inv(t)] } { \typing{\Gamma}{\rightinv(e)}[\arr[A]{v}{u}] }
\and \inferrule{
  \typing{\Gamma}{t}[\arr[A]{u}{v}]\\
  \typing{\Gamma}{e}[\Inv(t)] } { \typing{\Gamma}{\leftunit(e)}[\arr{\comp{\leftinv(e)}{t}}{\id v}] } \and \inferrule{
  \typing{\Gamma}{t}[\arr[A]{u}{v}]\\
  \typing{\Gamma}{e}[\Inv(t)] } { \typing{\Gamma}{\rightunit(e)}[\arr{\comp{t}{\rightinv(e)}}{\id u}] } \and \inferrule{
  \typing{\Gamma}{t}[\arr[A]{u}{v}]\\
  \typing{\Gamma}{e}[\Inv(t)] } {
  \typing{\Gamma}{\leftunitwitness(e)}[\Inv(\leftunit(e))] } \and \inferrule{
  \typing{\Gamma}{t}[\arr[A]{u}{v}]\\
  \typing{\Gamma}{e}[\Inv(t)] } {
  \typing{\Gamma}{\rightunitwitness(e)}[\Inv(\rightunit(e))] }
\end{mathpar}
    \caption{Destructors for invertibility structures}
  \end{subfigure}

  \begin{subfigure}{\linewidth}
  \begin{mathpar}
    \inferdef[\(\coind\)-intro]{
      \typing{\Gamma}{t}[\arr[A]{u}{v}]\\
      \typing{\Gamma}{t_l}[\arr[A]{v}{u}]\\
      \typing{\Gamma}{t_r}[\arr[A]{v}{u}]\\\\
      \typing{\Gamma}{t_{lu}}[\arr{\comp{t_l}{t}}{\id v}] \\
      \typing{\Gamma}{t_{ru}}[\arr{\comp{t}{t_r}}{\id u}] \\
      \typing{\Gamma}{t_{ilu}}[\Inv(t_{lu})] \\
      \typing{\Gamma}{t_{iru}}[\Inv(t_{ru})]
    }{
      \typing{\Gamma}{\coind(t,t_l,t_r,t_{lu},t_{ru},t_{ilu},t_{iru})}[\Inv(t)]
    }
  \end{mathpar}
    \caption{Introduction of coinductive invertibility structures}
  \end{subfigure}

  \begin{subfigure}{\textwidth}
    \centering
    \begin{mathpar}
      \inferdef[\(\beta\)-\(\leftinv\)]
      {\typing{\Gamma}{\coind(t,t_l,t_r,t_{lu},t_{ru},t_{ilu},t_{iru})}[\Inv_{\arr[A]{u}{v}}(t)]}
      {\typing{\Gamma}{\leftinv(\coind(t,t_l,t_r,t_{lu},t_{ru},t_{ilu},t_{iru}))
          \equiv t_l}[\arr[A]{v}{u}]} \and \inferdef[\(\beta\)-\(\rightinv\)]
      {\typing{\Gamma}{\coind(t,t_l,t_r,t_{lu},t_{ru},t_{ilu},t_{iru})}[\Inv_{\arr[A]{u}{v}}(t)]}
      {\typing{\Gamma}{\rightinv(\coind(t,t_l,t_r,t_{lu},t_{ru},t_{ilu},t_{iru}))
          \equiv t_r}[\arr[A]{v}{u}]} \and \inferdef[\(\beta\)-\(\leftunit\)]
      {\typing{\Gamma}{\coind(t,t_l,t_r,t_{lu},t_{ru},t_{ilu},t_{iru})}[\Inv_{\arr[A]{u}{v}}(t)]}
      {\typing{\Gamma}{\leftunit(\coind(t,t_l,t_r,t_{lu},t_{ru},t_{ilu},t_{iru}))
          \equiv t_{lu}}[\arr{t_l\ast t}{\id v}]} \and
      \inferdef[\(\beta\)-\(\rightunit\)]
      {\typing{\Gamma}{\coind(t,t_l,t_r,t_{lu},t_{ru},t_{ilu},t_{iru})}[\Inv_{\arr[A]{u}{v}}(t)]}
      {\typing{\Gamma}{\rightunit(\coind(t,t_l,t_r,t_{lu},t_{ru},t_{ilu},t_{iru}))
          \equiv t_{ru}}[\arr{t \ast t_r}{\id u}]} \and
      \inferdef[\(\beta\)-\(\leftunitwitness\)]
      {\typing{\Gamma}{\coind(t,t_l,t_r,t_{lu},t_{ru},t_{ilu},t_{iru})}[\Inv_{\arr[A]{u}{v}}(t)]}
      {\typing{\Gamma}{\leftunitwitness(\coind(t,t_l,t_r,t_{lu},t_{ru},t_{ilu},t_{iru}))
          \equiv t_{ilu}}[\Inv(t_{lu})]} \and
      \inferdef[\(\beta\)-\(\rightunitwitness\)]
      {\typing{\Gamma}{\coind(t,t_l,t_r,t_{lu},t_{ru},t_{ilu},t_{iru})}[\Inv_{\arr[A]{u}{v}}(t)]}
      {\typing{\Gamma}{\rightunitwitness(\coind(t,t_l,t_r,t_{lu},t_{ru},t_{ilu},t_{iru}))
          \equiv t_{iru}}[\Inv(t_{ru})]} \and
      \inferdef[\(\eta\)-\(\Inv\)] {\typing{\Gamma}{e}[\Inv(t)]}
      {\typing{\Gamma} {e \equiv
          \coind(t,\leftinv(e),\rightinv(e),\leftunit(e),\rightunit(e),
          \leftunitwitness(e),\rightunitwitness(e))}[\Inv(t)]}
    \end{mathpar}
    \caption{$\beta$ and \(\eta\) rules for coinductive invertibility
      structures}
    \label{fig:beta-eta-coind}
  \end{subfigure}

  \begin{subfigure}{\linewidth}
  \centering
  \begin{mathpar}
    \inferdef[\(\can\)-intro]{
      \typing{\Delta}{\coh_{\Gamma,A}[\gamma]}[A[\gamma]] \\
      \set{\typing{\Delta}{e_x}[\Inv(x[\gamma])]}_{x\in \Var_{\dim A +
          1}(\Gamma)} }{
      \typing{\Delta}{\can(\coh_{\Gamma,A}[\gamma],\set{e_x})}[\Inv(\coh_{\Gamma,A}[\gamma])]
    }
  \end{mathpar}
  \caption{Introduction rules for canonical invertibility structures}
\end{subfigure}

  \begin{subfigure}{\linewidth}
  \centering
  \begin{mathpar}
    \inferdef[\(\beta_{\can}\)-\(\leftinv\)]{
      \typing{\Delta}{\can(t,\{e_x\}_{x\in X})}[\Inv_{\arr{u}{v}}(t)]
    }{
      \typing{\Delta}{ \leftinv(\can(t,\set{e_x})) \equiv t^{\leftinv}}[\arr{v}{u}]
    }
    \and
    \inferdef[\(\beta_{\can}\)-\(\rightinv\)]{
      \typing{\Delta}{\can(t,\{e_x\}_{x\in X})}[\Inv_{\arr{u}{v}}(t)]
    }{
      \typing{\Delta}{ \rightinv(\can(t,\set{e_x})) \equiv t^{\rightinv}}[\arr{v}{u}]
    }
    \and
    \inferdef[\(\beta_{\can}\)-\(\leftunit\)]{
      \typing{\Delta}{\can(t,\{e_x\}_{x\in X})}[\Inv_{\arr{u}{v}}(t)]
    }{
      \typing{\Delta}{ \leftunit(\can(t,\set{e_x})) \equiv
        t^{\leftunit}}[\arr{t^{\leftinv}\ast t}{\id v}]
    }
    \and
    \inferdef[\(\beta_{\can}\)-\(\rightunit\)]{
      \typing{\Delta}{\can(t,\{e_x\}_{x\in X})}[\Inv_{\arr{u}{v}}(t)]
    }{
      \typing{\Delta}{ \rightunit(\can(t,\set{e_x})) \equiv
        t^{\rightunit}}[\arr{t\ast t^{\rightinv}}{\id u}]
    }
    \and
    \inferdef[\(\beta_{\can}\)-\(\leftunitwitness\)]{
      \typing{\Delta}{\can(t,\{e_x\}_{x\in X})}[\Inv_{\arr{u}{v}}(t)]
    }{
      \typing{\Delta}{
        \leftunitwitness(\can(t,\set{e_x})) \equiv \can(t^{\leftunit}, \{-\})
      }[\Inv(t^{\leftunit})]
    }
    \and
    \inferdef[\(\beta_{\can}\)-\(\rightunitwitness\)]{
      \typing{\Delta}{\can(t,\{e_x\}_{x\in X})}[\Inv_{\arr{u}{v}}(t)]
    }{
      \typing{\Delta}{ \rightunitwitness(\can(t,\{e_x\})) \equiv
        \can(t^{\rightunit},\{-\})}[\Inv(t^{\rightunit})]
    }
  \end{mathpar}
  \caption{\(\beta\) rules for canonical invertibility structures}
\end{subfigure}

\end{figure*}

\begin{figure*}[ht]\ContinuedFloat

\begin{subfigure}{\linewidth}
  \centering
  \begin{mathpar}
    \inferdef[\(\rec\)-intro]{
      \typing{\Equiv{n+1}}{t}[\arr[A]{x}{y}]\\
      \typing{\Equiv{n+1}}{t_l}[\arr[A]{y}{x}]\\
      \typing{\Equiv{n+1}}{t_r}[\arr[A]{y}{x}]\\\\
      \typing{\Equiv{n+1}}{t_{lu}}[\arr{\comp{t_l}{t}}{\id y}] \\
      \typing{\Equiv{n+1}}{t_{ru}}[\arr{\comp{t}{t_r}}{\id x}] \\
      \typing{\EquivInd{n+1}{t}}{t_{ilu}}[\Inv(t_{lu})] \\
      \typing{\EquivInd{n+1}{t}}{t_{iru}}[\Inv(t_{ru})] \\
      \typing{\Gamma}{\gamma}[\Equiv{n+1}]}{
      \typing{\Gamma}{\rec(t,t_l,t_r,t_{lu},t_{ru},t_{ilu},t_{iru},\gamma)}[\Inv(t[\gamma])]
    }\label{rule:rec-intro-bis}
  \end{mathpar}
  \caption{Introduction rule for recursive definitions}
\end{subfigure}

\begin{subfigure}{\textwidth}
  \centering
  \begin{mathpar}
    \inferdef[\(\beta_{\rec}\)-\(\leftinv\)]
    {\typing{\Gamma}{\rec(t,t_l,t_r,t_{lu},t_{ru},t_{ilu},t_{iru},\gamma)}[\Inv_{\arr[A]{u}{v}}(t[\gamma])]}
    {\typing{\Gamma}{\leftinv(\rec(t,t_l,t_r,t_{lu},t_{ru},t_{ilu},t_{iru},\gamma))
        \equiv t_l[\gamma]}[(\arr[A]{v}{u})]}
    \and
    \inferdef[\(\beta_{\rec}\)-\(\rightinv\)]
    {\typing{\Gamma}{\rec(t,t_l,t_r,t_{lu},t_{ru},t_{ilu},t_{iru},\gamma)}[\Inv_{\arr[A]{u}{v}}(t[\gamma])]}
    {\typing{\Gamma}{\rightinv(\rec(t,t_l,t_r,t_{lu},t_{ru},t_{ilu},t_{iru},\gamma))
        \equiv t_r[\gamma]}[(\arr[A]{v}{u})]}
    \and
    \inferdef[\(\beta_{\rec}\)-\(\leftunit\)]
    {\typing{\Gamma}{\rec(t,t_l,t_r,t_{lu},t_{ru},t_{ilu},t_{iru},\gamma)}[\Inv_{\arr[A]{u}{v}}(t[\gamma])]}
    {\typing{\Gamma}{\leftunit(\rec(t,t_l,t_r,t_{lu},t_{ru},t_{ilu},t_{iru},\gamma))
        \equiv t_{lu}[\gamma]}[\arr{t_{l}[\gamma]\ast t[\gamma]}{\id v}]} \and
    \inferdef[\(\beta_{\rec}\)-\(\rightunit\)]
    {\typing{\Gamma}{\rec(t,t_l,t_r,t_{lu},t_{ru},t_{ilu},t_{iru},\gamma)}[\Inv_{\arr[A]{u}{v}}(t[\gamma])]}
    {\typing{\Gamma}{\rightunit(\rec(t,t_l,t_r,t_{lu},t_{ru},t_{ilu},t_{iru},\gamma))
          \equiv t_{ru}[\gamma]}[\arr{t[\gamma]\ast t_r[\gamma]}{\id u}]} \and
      \inferdef[\(\beta_{\rec}\)-\(\leftunitwitness\)]
      {\typing{\Gamma}{\rec(t,t_l,t_r,t_{lu},t_{ru},t_{ilu},t_{iru},\gamma)}[\Inv_{\arr[A]{u}{v}}(t[\gamma])]}
      {\typing{\Gamma}{\leftunitwitness(\rec(t,t_l,t_r,t_{lu},t_{ru},t_{ilu},t_{iru},\gamma))
          \equiv
          t_{ilu}[\instantiation{\rec(t,t_l,t_r,t_{lu},t_{ru},t_{ilu},t_{iru},\gamma)}\circ\gamma]}[\Inv(t_{lu}[\gamma])]}
      \and \inferdef[\(\beta_{\rec}\)-\(\rightunitwitness\)]
      {\typing{\Gamma}{\rec(t,t_l,t_r,t_{lu},t_{ru},t_{ilu},t_{iru},\gamma)}[\Inv_{\arr[A]{u}{v}}(t[\gamma])]}
      {\typing{\Gamma}{\rightunitwitness(\rec(t,t_l,t_r,t_{lu},t_{ru},t_{ilu},t_{iru},\gamma))
          \equiv
          t_{iru}[\instantiation{\rec(t,t_l,t_r,t_{lu},t_{ru},t_{ilu},t_{iru},\gamma)}\circ
          \gamma]}[\Inv(t_{ru}[\gamma])]}
    \end{mathpar}
    \caption{$\beta$ rules for recursive definition of coinductive invertibility
      structures}
    \label{fig:beta-rec}
  \end{subfigure}
  \caption{Rules of the theory \cattinv}
  \label{fig:rules-cattinv}
\end{figure*}

\clearpage
\balance
\section{Internal proofs in \texorpdfstring{\cattinv}{ICaTT}}
\label{app:proofs-artefact}

This section is dedicated to presenting the formal proofs of results that we
have verified in \cattinv. Below is a copy of the file that we have verified in
our prototype implementation of the \cattinv type theory, also provided as file
\verb|proofs/invertibility.catt| in the supplementary
material (available at \url{https://zenodo.org/records/18343317}).

\vspace{\baselineskip}

\begin{Verbatim}
### USEFUL COHERENCES
coh unitr- (x(f)y) : f -> comp f (id _)
coh unitl (x(f)y) : comp (id _) f -> f
coh unitl- (x(f)y) : f -> comp (id _) f
coh assoc (x(f)y(g)z(h)w)
: comp f (comp g h) -> comp (comp f g) h
coh assoc- (x(f)y(g)z(h)w)
: comp (comp f g) h -> comp f (comp g h)
coh unit3 (x(f)y(g)z) : comp f (id _) g -> comp f g
coh whiskl (x(f)y(g(a)h)z) : comp f g -> comp f h
coh whiskr (x(f(a)g)y(h)z) : comp f h -> comp g h
coh whisk3 (x(f)y(g(a)h)z(k)w)
: comp f g k -> comp f h k
coh assoc-le (x(f)y(g)z(h)w(k)v(l)u)
: comp (comp (comp f g) h) (comp k l)
    -> comp f (comp g (comp h k)) l
coh assoc-re (x(f)y(g)z(h)w(k)v(l)u)
: comp (comp f g) (comp h (comp k l))
    -> comp f (comp (comp g h) k) l

### PROPOSITION 5.1 ###

let compinv (x : *) (y : *) (z : *)
            (f : x -> y) (g : y -> z)
            (e : Inv (f)) (e' : Inv (g))
            : Inv (comp f g)
            = can ( comp f g { e , e' })

### PROPOSITION 5.2

let lri (x : *) (y : *) (f : x -> y) (e : Inv(f))
: linv (e) -> rinv (e)
= comp (unitr- (linv (e)))
       (whiskl (linv (e)) (linv (irunit (e))))
       (assoc _ _ _)
       (whiskr (lunit (e)) (rinv (e)) )
       (unitl (rinv(e)))

let lriU-aux (x : *) (y : *) (f : x -> y) (e : Inv(f))
(e' : Inv (linv (irunit (e))))
: Inv (lri e)
= can (lri e {
        can (_ {}) ,
        can (_ { e' }) ,
        can (_ {}) ,
        can (_ { ilunit (e) }) ,
        can (_ {})
})

rec linv-inv (x : *) (y : *) (f : x -> y) (e : Inv(f))
= { linv(e) ,
    f ,
    f ,
    comp (whiskl f (lri e)) (runit (e)) ,
    lunit (e) ,
    can (_ {can (_ { lriU-aux IHright }) , (irunit (e))}),
    ilunit (e)
}

### PROPOSITION 5.3 ###
let lriU (x : *) (y : *) (f : x -> y) (e : Inv(f))
: Inv (lri e)
= lriU-aux (linv-inv irunit (e))

### PROPOSITION 5.4 ###
inv rinv-inv (x : *) (y : *) (f : x -> y) (e : Inv(f))
= { rinv(e) ,
    f ,
    f ,
    runit (e) ,
    comp (whiskr (linv (lriU e)) f) (lunit (e)) ,
    irunit (e) ,
    can (_ {can (_ { linv-inv (lriU e) }) , (ilunit (e))})
}


### PROPOSITION 5.5 ###
inv transport (x : *) (y : *) (f : x -> y) (e : Inv (f))
      (g : x -> y) (a : f -> g) (e' : Inv (a))
= {g ,
   linv (e) ,
   rinv (e) ,
   comp (whiskl (linv (e)) (linv (e'))) (lunit (e)) ,
   comp (whiskr (linv (e')) (rinv (e))) (runit (e)) ,
   can (_ {can (_ { linv-inv e' }) , ilunit (e)}) ,
   can (_ {can (_ { linv-inv e' }) , irunit (e)})
}

### PROPOSITION 5.6 ###
inv 2of6-g
      (x : *) (y : *) (z : *) (w : *)
      (f : x -> y) (g : y -> z) (h : z -> w)
      (e : Inv(comp f g)) (e' : Inv(comp g h)) =
      { g ,
        comp (linv (e)) f,
        comp h (rinv (e')) ,
        comp (assoc- (linv (e)) f g) (lunit (e)) ,
        comp (assoc g h (rinv (e'))) (runit (e')) ,
        can (_ { can (_ {}) , ilunit (e) } ) ,
        can (_ { can (_ {}) , irunit (e') } ) }

let 2of6-f-runit
    (x : *) (y : *) (z : *) (w : *)
    (f : x -> y) (g : y -> z) (h : z -> w)
    (e : Inv(comp f g)) (e' : Inv(comp g h))
    : comp f (comp g (rinv (e))) -> id x
    = comp (assoc f g rinv(e))
           (runit (e))

let 2of6-f-rwit
    (x : *) (y : *) (z : *) (w : *)
    (f : x -> y) (g : y -> z) (h : z -> w)
    (e : Inv(comp f g)) (e' : Inv(comp g h))
    : Inv(2of6-f-runit e e')
    = can (2of6-f-runit e e' {
           can (_ {}) ,
           irunit (e)})

let 2of6-f-lunit
    (x : *) (y : *) (z : *) (w : *)
    (f : x -> y) (g : y -> z) (h : z -> w)
    (e : Inv(comp f g)) (e' : Inv(comp g h))
    : comp (comp g (linv (e))) f -> id y
    = comp (unitr- (comp (comp g (linv (e))) f))
           (whiskl _ (linv (irunit (2of6-g e e'))))
           (assoc-le _ _ _ _ _)
           (whisk3 _ (lunit (e)) _)
           (unit3 _ _)
           (runit (2of6-g e e'))

let 2of6-f-lwit
    (x : *) (y : *) (z : *) (w : *)
    (f : x -> y) (g : y -> z) (h : z -> w)
    (e : Inv(comp f g)) (e' : Inv(comp g h))
    : Inv (2of6-f-lunit e e')
    = can (2of6-f-lunit e e' {
        can(_ {}) ,
        can(_ {linv-inv (irunit (2of6-g e e'))}) ,
        can(_ {}) ,
        can(_ { ilunit (e) }) ,
        can(_ {}) ,
        (irunit (2of6-g e e')) })

inv 2of6-f
    (x : *) (y : *) (z : *) (w : *)
    (f : x -> y) (g : y -> z) (h : z -> w)
    (e : Inv(comp f g)) (e' : Inv(comp g h))
= { f , comp g (linv (e)) , comp g (rinv (e)) ,
    2of6-f-lunit e e' , 2of6-f-runit e e' ,
    2of6-f-lwit e e' , 2of6-f-rwit e e' }

let 2of6-h-lunit
    (x : *) (y : *) (z : *) (w : *)
    (f : x -> y) (g : y -> z) (h : z -> w)
    (e : Inv(comp f g)) (e' : Inv(comp g h))
    : comp (comp (linv (e')) g) h -> id w
    = comp (assoc- (linv (e')) g h)
           (lunit (e'))

let 2of6-h-lwit
    (x : *) (y : *) (z : *) (w : *)
    (f : x -> y) (g : y -> z) (h : z -> w)
    (e : Inv(comp f g)) (e' : Inv(comp g h))
    : Inv (2of6-h-lunit e e')
    = can (2of6-h-lunit e e' {
                 can(_ {}) ,
                 ilunit (e')})

let 2of6-h-runit
    (x : *) (y : *) (z : *) (w : *)
    (f : x -> y) (g : y -> z) (h : z -> w)
    (e : Inv(comp f g)) (e' : Inv(comp g h))
    : comp h (comp (rinv (e')) g) -> id z
    = comp (unitl- (comp h (comp (rinv (e')) g)))
           (whiskr (rinv (ilunit (2of6-g e e'))) _)
           (assoc-re (linv (2of6-g e e')) g _ _ _)
           (whisk3 _ (runit (e')) _)
           (unit3 _ _)
           (lunit (2of6-g e e'))


let 2of6-h-rwit
    (x : *) (y : *) (z : *) (w : *)
    (f : x -> y) (g : y -> z) (h : z -> w)
    (e : Inv(comp f g)) (e' : Inv(comp g h))
    : Inv(2of6-h-runit e e')
    = can (2of6-h-runit e e' {
            can(_ {}) ,
            can(_ {rinv-inv (ilunit (2of6-g e e'))}) ,
            can(_ {}) ,
            can(_ { irunit (e') }) ,
            can(_ {}) ,
            (ilunit (2of6-g e e')) })

inv 2of6-h
    (x : *) (y : *) (z : *) (w : *)
    (f : x -> y) (g : y -> z) (h : z -> w)
    (e : Inv(comp f g)) (e' : Inv(comp g h))
= { h , comp (linv (e')) g , comp (rinv (e')) g ,
    2of6-h-lunit e e' , 2of6-h-runit e e' ,
    2of6-h-lwit e e' , 2of6-h-rwit e e' }
\end{Verbatim}


\end{document}